\documentclass[11pt]{article}
\usepackage{e-jc}
\usepackage[margin=1cm]{caption}
\usepackage{amsmath, amsthm, amssymb}

\usepackage{verbatim}
\usepackage{url}
\pagestyle{plain}

\usepackage{tikz,tkz-graph}
\usepackage{subfig}
\usepackage{hyperref}


\theoremstyle{plain}
\newtheorem{thm}{Theorem}
\newtheorem*{lemmaA}{Lemma A}
\newtheorem*{mainfirst}{Theorem 1}

\newtheorem{lem}[thm]{Lemma}

\newtheorem*{lemma8}{Lemma 8}
\newtheorem*{lemma9}{Lemma 9}
\newtheorem*{lemma17}{Lemma 17}
\newtheorem*{lemma18}{Lemma 18}
\newtheorem*{lemma19}{Lemma 19}
\newtheorem*{lemma20}{Lemma 20}
\newtheorem*{lemma21}{Lemma 21}
\newtheorem*{lemma22}{Lemma 22}
\newtheorem*{lemma23}{Lemma 23}

\theoremstyle{definition}

\theoremstyle{remark}

\newcommand{\chif}{\chi_{f}}
\newcommand{\ch}{{\textrm{ch}}}
\newcommand{\fancy}[1]{\mathcal{#1}}

\newcommand{\G}{\fancy{G}}
\newcommand{\I}{\fancy{I}}

\newcommand{\floor}[1]{\left\lfloor#1\right\rfloor}
\newcommand{\ceil}[1]{\left\lceil#1\right\rceil}
\newcommand{\card}[1]{\left|#1\right|}
\newcommand{\set}[1]{\left\{ #1 \right\}}
\newcommand{\parens}[1]{\left( #1 \right)}
\newcommand{\brackets}[1]{\left[ #1 \right]}
\newcommand{\nonadj}{\not\leftrightarrow}
\newcommand{\adj}{\leftrightarrow}

\date{\dateline{June 6, 2015}{August 15, 2016}\\
\small Mathematics Subject Classifications: 05C69, 05C10}

\begin{document}
\title{Planar graphs have independence ratio at least $3/13$}
\author{Daniel W. Cranston\thanks{Department of Mathematics and Applied
Mathematics, Virginia Commonwealth University, Richmond, VA, 23284. email:
\texttt{dcranston@vcu.edu}} \and 
Landon Rabern\thanks{LBD Data Solutions, Lancaster, PA, 17601.  email:
\texttt{landon.rabern@gmail.com}. 
}}
%
\maketitle
\date

\begin{abstract}
The 4 Color Theorem (4CT) implies that every
$n$-vertex planar graph has an independent set of size at least $\frac{n}4$; this
is best possible, as shown by the disjoint union of many copies of $K_4$.
In 1968, Erd\H{o}s
asked whether this bound on 
independence number could be proved more easily than the full 4CT.  In 1976
Albertson
showed (independently of the 4CT)
that every $n$-vertex planar graph has an independent set of size at least
$\frac{2n}9$.  Until now, this remained the best bound independent of the 4CT.
Our main result improves this bound to $\frac{3n}{13}$.
\end{abstract}

\section{Introduction}

An \emph{independent set} is a subset of vertices that induce no edges.  The
independence number $\alpha(G)$ of a graph $G$ is the size of a largest
independent set in $G$.  Determining the independence number of an arbitrary
graph $G$ is widely-studied and well-known to be NP-complete.  In fact, this
problem remains NP-complete, even when restricted to planar graphs of maximum
degree 3 (see, for example, \cite[Lemma 1]{GareyJohnson}).
Thus, much work
in this area focuses on proving lower bounds for the independence number of
some special class of graphs, often in terms of $|V(G)|$.
The \emph{independence ratio} of a graph $G$ is the quantity
$\frac{\alpha(G)}{|V(G)|}$.  


An immediate consequence of the 4 Color Theorem~\cite{AppelHaken1,AppelHaken2}
is that every planar graph has independence ratio at least $\frac14$;
simply take the largest color class.  In fact, this bound is best possible, as
shown by the disjoint union of many copies of $K_4$.
In 1968, Erd\H{o}s~\cite{berge} suggested that perhaps this corollary could be
proved more easily than the full 4 Color Theorem.  And in 1976,
Albertson~\cite{Albertson} showed (independently of the 4~Color Theorem)
that every planar graph has independence ratio at least
$\frac29$.  Our main theorem improves this bound to $\frac3{13}$.

\begin{thm}
Every planar graph has independent ratio at least $\frac{3}{13}$.
\label{mainthm2}
\end{thm}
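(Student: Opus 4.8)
The plan is to argue by contradiction using the discharging method on a minimal counterexample. Suppose the theorem fails, and among all planar counterexamples let $G$ be one with $\card{V(G)}$ as small as possible. Since both $\alpha$ and $\card{V(\cdot)}$ are additive over connected components, $G$ is connected; and since adding edges cannot increase the independence number, among vertex-minimal counterexamples we may take one that is edge-maximal, i.e.\ a plane triangulation (note $\card{V(G)}\ge 3$, as one- and two-vertex graphs are not counterexamples). Write $n=\card{V(G)}$, so $\alpha(G)<\frac{3}{13}n$ while every planar graph on fewer than $n$ vertices has independence ratio at least $\frac{3}{13}$.

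The driving observation is a reducibility principle: if $I$ is any nonempty independent set of $G$, then $\alpha(G)\ge\alpha(G-N[I])+\card{I}$, since a maximum independent set of $G-N[I]$ together with $I$ is independent in $G$. Applying minimality to $G-N[I]$ (the empty case being immediate) gives $\alpha(G)\ge\frac{3}{13}(n-\card{N[I]})+\card{I}$, so whenever $\card{N[I]}\le\frac{13}{3}\card{I}$ we reach the contradiction $\alpha(G)\ge\frac{3}{13}n$. Hence in $G$ every independent set $I$ satisfies $\card{N[I]}>\frac{13}{3}\card{I}$. The case $\card{I}=1$ already forces $\delta(G)\ge 4$; the cases $\card{I}=2,3,\dots$ impose quantitative spreading conditions (for instance, two nonadjacent $4$-vertices have at most one common neighbour), and refining the choice of $I$ inside prescribed local structures yields a catalogue of \emph{reducible configurations}---small subgraphs, specified by the degrees and adjacencies of their vertices and their attachment to the rest of $G$---none of which can occur in $G$.

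The second half is the discharging argument. Since $G$ is a triangulation, assign each vertex $v$ the initial charge $\mu(v)=\deg(v)-6$; by Euler's formula $\sum_v\mu(v)=2\card{E(G)}-6n=-12$. Because $\delta(G)\ge 4$, only $4$-vertices (which must gain $2$) and $5$-vertices (which must gain $1$) start negative, while vertices of degree at least $7$ have surplus. I would then design discharging rules by which high-degree vertices send charge (across incident faces and/or along edges) to nearby $4$- and $5$-vertices, and use the reducible configurations to argue that every low-degree vertex is guaranteed enough donors, so that every vertex finishes with nonnegative charge; this forces $\sum_v\mu(v)\ge 0$, contradicting $\sum_v\mu(v)=-12$ and completing the proof.

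The main obstacle is the discharging phase. The constant $\frac{3}{13}$ is noticeably closer to the extremal value $\frac14$ than Albertson's $\frac29$, so the reducibility threshold $\card{N[I]}>\frac{13}{3}\card{I}$ is hard to trip: a deletion set $N[I]$ only ``pays'' when it is unusually dense in the independent set $I$, so each individual reducible configuration is weak, and one needs a large family of them together with a tightly tuned set of discharging rules and a lengthy case analysis to rebalance the charge with essentially no slack. A secondary but real difficulty lives inside the reducibility lemmas themselves: the deleted set must be chosen so that the exhibited independent set is genuinely independent in all of $G$ and so that what remains is a strictly smaller planar graph, which is exactly where the local geometry (the link cycles of the low-degree vertices, their common neighbours, and the way triangles nest around them) has to be exploited.
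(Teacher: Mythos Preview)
Your outline correctly identifies the overall architecture (minimal counterexample, triangulation, reducibility, discharging with $d(v)-6$), but it is only an outline, and more importantly the single reducibility principle you state is too weak to carry the argument.

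The deletion inequality $\alpha(G)\ge\alpha(G-N[I])+\card{I}$ only forbids $\card{N[I]}\le\frac{13}{3}\card{I}$, so for $\card{I}=1$ you get merely $\delta(G)\ge 4$. The paper obtains $\delta(G)\ge 5$ (Lemma~\ref{CrunchToOneVertexNeighborhoodSize}), and the entire discharging design depends on this: there are no $4$-vertices to feed, and the reducibility lemmas (Lemmas~\ref{alpha55}--\ref{reduce7vertex}) all assume the link of every vertex is a $5^+$-cycle. The extra strength comes from Lemma~\ref{uber-red}, which does not delete $N[I]$ but \emph{contracts} connected pieces of it and then extends an independent set of the minor back to $G$; this gives, for a non-maximal independent set $J$, the bound $\card{N(J)}\ge\floor{\frac{10}{3}\card{J}}+2$, strictly better than what deletion alone yields. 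The same contraction idea is what makes Lemmas~\ref{JIsTwo}, \ref{ThreeHelper}, \ref{JIsThreeFourteen}, etc.\ work; pure deletion cannot reach $\card{N(J)}\ge 9$ for $\card{J}=2$ or $\card{N(J)}\ge 13,14$ for $\card{J}=3$.

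A second missing ingredient is Lemma~\ref{alphaNoTriangles}: a minimal counterexample has no separating $3$-cycle. This is not a consequence of your deletion principle (its proof splits $G$ along the triangle and identifies vertices in each piece), yet it is used pervasively---to know that $G[N(v)]$ is an induced cycle, to guarantee that vertices drawn as distinct in the configurations really are distinct, and to certify nonadjacencies needed for the independent $2$- and $4$-sets in Lemmas~\ref{alpha666}--\ref{reduce7vertex}. Without it the local pictures collapse and the case analysis cannot start. In short, the strategy is right, but the proof needs the contraction-based reducibility machinery and the separating-triangle lemma before the discharging rules (which the paper specifies explicitly as (R1)--(R5)) can be verified.
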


The proof of Theorem~\ref{mainthm2} is heavily influenced by Albertson's proof.
One apparent difference is that our proof uses the discharging method, while his
does not.  However, this distinction is largely cosmetic.  To demonstrate this
point, we begin with a short discharging version of the final
step in Albertson's proof, which he verified using edge-counting. 
Although the arguments are essentially equivalent, the discharging method is
somewhat more flexible.  In part it was this added flexibility that allowed us
to push his ideas further.

The proof of our main result has the following outline.  The bulk of the work
consists in showing that certain configurations are \emph{reducible}, i.e.,
they cannot appear in a minimal counterexample to the theorem.  
The remainder of the proof is a counting argument (called \emph{discharging}),
where we show that every planar graph contains one of the forbidden
configurations; hence, it is not a minimal counterexample.



In the discharging section, we give each vertex $v$ initial charge $d(v)-6$,
where $d(v)$ is the degree of $v$.  By Euler's formula the sum of the initial
charges is $-12$.  Our goal is to redistribute charge, without changing the
sum, (assuming that $G$ contains no reducible configuration) so that every
vertex finishes with nonnegative charge.  This contradiction proves that, in
fact, $G$ must contain a reducible configuration.
To this end, we want to show that $G$ contains a reducible configuration 
whenever it has many vertices of degree at most 6
near each other, since vertices of degree 5 will need to receive charge and
vertices of degree 6 will have no spare charge to give away.  (We will see in
Lemma~\ref{CrunchToOneVertexNeighborhoodSize} that $G$ must have minimum degree
5.) Most of the work in the reducibility section goes into proving various
formalizations of this intuition.  

Typically, proofs like ours present the reducibility portion before
the discharging portion. 
However, because many of our reducibility arguments are quite technical, we
make the unusual choice to give the discharging first, with the goal of
providing context for the reducible configurations.  (Usually the process of
finding a proof switches back and forth between discharging and reducibility.
By necessity, though, the proof must present one of these first.)

We start with definitions.
A \emph{$k$-vertex} is a vertex of degree $k$; similarly, a $k^-$-vertex
(resp.~$k^+$-vertex) has degree at most (resp.~at least) $k$.
A $k$-neighbor of a vertex $v$ is a $k$-vertex that is a neighbor of $v$; and
$k^-$-neighbors and $k^+$-neigbors are defined analogously.
A $k$-cycle is a cycle of length $k$.
A vertex set $V_1$ in a connected graph $G$ is \emph{separating} if $G\setminus V_1$
has at least two components.  A cycle $C$ is separating if $V(C)$ is separating.
An \emph{independent $k$-set} is an independent set of size $k$.
When vertices $u$ and $v$ are adjacent, we write $u\adj v$; otherwise $u\nonadj
v$.

For a vertex $v$, let $H_v$ denote the subgraph induced by the $5$-neighbors and
$6$-neighbors of $v$.  Throughout the proof we consider a (hypothetical) minimal
counterexample $G$, which will be a triangulation.  In
Lemma~\ref{alphaNoTriangles}, we show that $G$ has no separating 3-cycle. 
These properties together imply that, for every vertex $v$, the subgraph
induced by the neighbors of $v$ is a cycle.  If some $w\in V(H_v)$ has
$d_{H_v}(w)=0$, then $w$ is an \emph{isolated} neighbor of $v$; otherwise $w$
is a \emph{non-isolated}
neighbor.  A non-isolated 5-neighbor of a vertex $v$ is \emph{crowded} (with
respect to $v$) if it has two $6$-neighbors in $H_v$.  We use crowded 5-neighbors
in the discharging proof to help ensure that $7$-vertices finish with sufficient
charge, specifically to handle the configuration in Figure~\ref{fig:whycrowded}.

\begin{figure}[!bh]
\centering
\begin{tikzpicture}[scale = 9]
\tikzstyle{VertexStyle}=[shape = circle, minimum size = 6pt, inner sep = 1.2pt, draw]
\Vertex[x = 0.60, y = 0.70, L = \small {$v$}]{v0}
\Vertex[x = 0.50, y = 0.90, L = \small {$6$}]{v1}
\Vertex[x = 0.70, y = 0.90, L = \small {$5$}]{v2}
\Vertex[x = 0.80, y = 0.75, L = \small {$6$}]{v3}
\Vertex[x = 0.40, y = 0.75, L = \small {$7^+$}]{v4}
\Vertex[x = 0.45, y = 0.55, L = \small {$6$}]{v5}
\Vertex[x = 0.60, y = 0.50, L = \small {$6$}]{v6}
\Vertex[x = 0.75, y = 0.55, L = \small {$7^+$}]{v7}
\Edge[](v1)(v0)
\Edge[](v2)(v0)
\Edge[](v3)(v0)
\Edge[](v4)(v0)
\Edge[](v5)(v0)
\Edge[](v6)(v0)
\Edge[](v7)(v0)
\Edge[](v5)(v6)
\Edge[](v5)(v4)
\Edge[](v1)(v4)
\Edge[](v1)(v2)
\Edge[](v2)(v3)
\Edge[](v3)(v7)
\Edge[](v7)(v6)
\end{tikzpicture}
\caption{A 7-vertex $v$ gives no charge to any crowded 5-neighbor.}
\label{fig:whycrowded}
\end{figure}
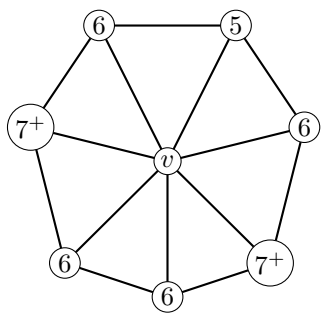

\section{Discharging: A Warmup}
As a warmup to our main proof, in this section
we give a short discharging proof that every planar triangulation with minimum
degree 5 and no separating 3-cycle must contain a certain configuration, which
Albertson showed could not appear in a minimal planar graph with independence
ratio less than $\frac29$.
(In fact, finding this proof helped encourage us to begin work on the present
paper.)

\begin{lemmaA}
Let $u$ and $v$ be adjacent vertices, such that $uvw$ and $uvx$ are 3-faces and
$d(w)=5$ and $d(x)\le 6$; call this configuration $H$.  (See Figure~\ref{figH}.)
If $G$ is a plane triangulation with minimum degree 5 and no separating
3-cycle, then $G$ contains a copy of $H$.
\end{lemmaA}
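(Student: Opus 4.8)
The plan is to prove Lemma~A by discharging. Suppose for contradiction that $G$ is a plane triangulation with minimum degree~$5$ and no separating $3$-cycle, yet $G$ contains no copy of $H$. Assign each vertex $v$ the charge $\mu(v)=d(v)-6$; since $G$ is a triangulation, Euler's formula gives $\sum_v\mu(v)=-12$. I will move charge by a single rule and then argue, using that $G$ has no copy of $H$, that every vertex ends with nonnegative charge, contradicting that the charges sum to $-12$; this forces $G$ to contain a copy of $H$.

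The structural engine is an observation about a $5$-vertex $w$. Because $G$ is a triangulation with no separating $3$-cycle and minimum degree~$5$, the neighbors of $w$ form a $5$-cycle $a_1a_2a_3a_4a_5$, and each ``link edge'' $a_ia_{i+1}$ lies in a second $3$-face $a_ia_{i+1}z_i$ with $z_i\notin\{w,a_1,\dots,a_5\}$. Applying ``no copy of $H$'' to the edge $a_ia_{i+1}$, whose two opposite third-vertices are $w$ (of degree~$5$) and $z_i$, forces $d(z_i)\ge 7$; and a short argument from minimum degree together with the absence of separating $3$-cycles shows $z_1,\dots,z_5$ are five \emph{distinct} vertices --- if $z_i=z_j$ for $i\ne j$ then some $a_k$ is surrounded by only four faces (so $d(a_k)=4$) or a separating $3$-cycle appears. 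Thus every $5$-vertex has five distinct ``witnesses,'' all of degree at least~$7$. The discharging rule is then simply: each $5$-vertex takes $\tfrac15$ from each of its five witnesses. Now a $5$-vertex ends with $-1+5\cdot\tfrac15=0$, a $6$-vertex is untouched, and a vertex $v$ of degree $d\ge 8$ gives away at most $\tfrac d5$ (the $5$-vertices drawing from $v$ are the degree-$5$ vertices among the $d$ outer third-vertices at the faces of $v$, and the same distinctness argument shows $v$ is each one's witness at most once), finishing with at least $d-6-\tfrac d5=\tfrac{4d-30}5>0$.

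The remaining, and main, case is a $7$-vertex $v$, where this crude bound gives only $-\tfrac25$; I must show $v$ is a witness of at most five $5$-vertices, i.e.\ at least two of the seven outer third-vertices around $v$ have degree at least~$6$. This is where essentially all the work lies, and I would handle it by a local analysis: assuming six or seven of those third-vertices are $5$-vertices, I trace the faces around the neighbors of $v$ and repeatedly invoke ``no copy of $H$'' on edges incident to those neighbors. This forces any neighbor of $v$ that sees two of the surrounding $5$-vertices to have degree at least~$7$ (so no two small neighbors of $v$ are consecutive), and pushing the same reasoning further drives one of three outcomes: a vertex of degree less than~$5$, a separating $3$-cycle, or an edge both of whose opposite third-vertices have degree at most~$6$ with one of degree exactly~$5$ --- an actual copy of $H$. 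Each outcome contradicts a standing hypothesis, so $v$ too ends nonnegative, and the proof is complete. I expect this $7$-vertex enumeration to be the real obstacle: one must organize the possible arrangements of the neighbors of $v$ relative to the surrounding $5$-vertices and exhibit a copy of $H$ (or a forbidden subconfiguration) in every sub-case.
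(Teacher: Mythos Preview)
Your approach differs substantially from the paper's. Both start with charge $d(v)-6$, but the paper routes charge along edges to \emph{direct neighbors} via three rules ($7^+$-vertices send $\tfrac13$ to each $5$-neighbor and $\tfrac17$ to each $6$-neighbor having a $5$-neighbor; $6$-vertices send $\tfrac27$ to each $5$-neighbor). Its analysis then rests on the immediate consequence of ``no $H$'': in the link cycle of any vertex, the neighbor two steps away from a $5$-neighbor must be a $7^+$-vertex (and hence each vertex has at most $\lfloor d/2\rfloor$ $5$-neighbors). Because this is a statement about direct neighbors, every case---including $d(v)=7$---becomes a two-line check by counting $5$-neighbors and the forced $7^+$-neighbors near them.

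Your single rule, pulling $\tfrac15$ across each link edge from the witness, is elegant and handles degrees $5$, $6$, and $\ge 8$ cleanly. But the $7$-vertex case is a genuine gap, not a routine omission. You need that at most five of the seven outer third-vertices $y_1,\dots,y_7$ around a $7$-vertex $v$ have degree $5$, and ``no $H$'' constrains the $y_i$ only indirectly: the edge $b_ib_{i+1}$ already has $v$ (degree $7$) on one side, so $d(y_i)=5$ is freely permitted there, and one must chase constraints through edges one step farther out. One does get, for instance, that $d(b_i)\ne 6$ whenever $y_{i-1},y_i$ are both $5$-vertices (the sixth link-vertex of $b_i$ would sit opposite both, yielding $H$); but $d(b_i)=5$ is \emph{not} ruled out by this, so your assertion that such a $b_i$ ``has degree at least $7$'' is incorrect as stated. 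More seriously, when all $b_i$ happen to have degree $\ge 7$, the constraints from ``no $H$'' on the second neighborhood of $v$ do not obviously close up: I could not locate a forced copy of $H$, a degree-$\le 4$ vertex, or a separating triangle by the kind of local tracing you describe. Your sketch (``trace the faces \ldots repeatedly invoke no $H$ \ldots one of three outcomes'') does not specify which edges to examine or why the enumeration terminates, and you yourself flag this as ``the real obstacle.'' Until that case is actually carried out---or the scheme is augmented with a secondary rule---the argument is incomplete. The paper's decision to discharge through direct neighbors is precisely what turns the $7$-vertex case into a short computation rather than an open-ended enumeration.
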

\begin{proof}
\begin{figure}
\subfloat[Adjacent vertices $u$ and $v$, with nonadjacent common 5-neighbors $w$
and $x$.]{\makebox[.5\textwidth]{
\begin{tikzpicture}[scale = 13, rotate=90]
\tikzstyle{VertexStyle} = []
\tikzstyle{EdgeStyle} = []
\tikzstyle{labeledStyle}=[shape = circle, minimum size = 6pt, inner sep = 1.2pt, draw]
\tikzstyle{unlabeledStyle}=[shape = circle, minimum size = 6pt, inner sep = 1.2pt, draw]
\Vertex[style = unlabeledStyle, x = 0.60, y = 0.70, L = \small {$v$}]{v0}
\Vertex[style = unlabeledStyle, x = 0.70, y = 0.70, L = \small {$u$}]{v1}
\Vertex[style = unlabeledStyle, x = 0.65, y = 0.80, L = \small {$w$}]{v2}
\Vertex[style = unlabeledStyle, x = 0.65, y = 0.60, L = \small {$x$}]{v3}
\Vertex[style = unlabeledStyle, x = 0.75, y = 0.80, L = \small {}]{v4}
\Vertex[style = unlabeledStyle, x = 0.55, y = 0.80, L = \small {}]{v5}
\Vertex[style = unlabeledStyle, x = 0.65, y = 0.90, L = \small {}]{v6}
\Vertex[style = unlabeledStyle, x = 0.75, y = 0.60, L = \small {}]{v7}
\Vertex[style = unlabeledStyle, x = 0.55, y = 0.60, L = \small {}]{v8}
\Vertex[style = unlabeledStyle, x = 0.65, y = 0.50, L = \small {}]{v9}
\Edge[](v0)(v3)
\Edge[](v1)(v3)
\Edge[](v7)(v3)
\Edge[](v8)(v3)
\Edge[](v9)(v3)
\Edge[](v0)(v2)
\Edge[](v1)(v2)
\Edge[](v4)(v2)
\Edge[](v5)(v2)
\Edge[](v6)(v2)
\Edge[](v4)(v1)
\Edge[](v7)(v1)
\Edge[](v4)(v6)
\Edge[](v0)(v5)
\Edge[](v1)(v0)
\Edge[](v8)(v0)
\Edge[](v8)(v9)
\Edge[](v7)(v9)
\Edge[](v6)(v5)
\end{tikzpicture}
}}
\subfloat[Adjacent vertices $u$ and $v$, with nonadjacent common neighbors $w$
and $x$, of degree 5 and 6.]{\makebox[.5\textwidth]{
\begin{tikzpicture}[scale = 13, rotate=-90]
\tikzstyle{VertexStyle} = []
\tikzstyle{EdgeStyle} = []
\tikzstyle{labeledStyle}=[shape = circle, minimum size = 6pt, inner sep = 1.2pt, draw]
\tikzstyle{unlabeledStyle}=[shape = circle, minimum size = 6pt, inner sep = 1.2pt, draw]
\Vertex[style = unlabeledStyle, x = 0.60, y = 0.70, L = \small {$u$}]{v0}
\Vertex[style = unlabeledStyle, x = 0.70, y = 0.70, L = \small {$v$}]{v1}
\Vertex[style = unlabeledStyle, x = 0.65, y = 0.80, L = \small {$x$}]{v2}
\Vertex[style = unlabeledStyle, x = 0.65, y = 0.60, L = \small {$w$}]{v3}
\Vertex[style = unlabeledStyle, x = 0.75, y = 0.80, L = \small {}]{v4}
\Vertex[style = unlabeledStyle, x = 0.55, y = 0.80, L = \small {}]{v5}
\Vertex[style = unlabeledStyle, x = 0.60, y = 0.90, L = \small {}]{v6}
\Vertex[style = unlabeledStyle, x = 0.70, y = 0.90, L = \small {}]{v7}
\Vertex[style = unlabeledStyle, x = 0.75, y = 0.60, L = \small {}]{v8}
\Vertex[style = unlabeledStyle, x = 0.55, y = 0.60, L = \small {}]{v9}
\Vertex[style = unlabeledStyle, x = 0.65, y = 0.50, L = \small {}]{v10}
\Edge[](v0)(v3)
\Edge[](v1)(v3)
\Edge[](v8)(v3)
\Edge[](v9)(v3)
\Edge[](v10)(v3)
\Edge[](v0)(v2)
\Edge[](v1)(v2)
\Edge[](v4)(v2)
\Edge[](v5)(v2)
\Edge[](v6)(v2)
\Edge[](v7)(v2)
\Edge[](v4)(v1)
\Edge[](v8)(v1)
\Edge[](v4)(v7)
\Edge[](v6)(v7)
\Edge[](v0)(v5)
\Edge[](v6)(v5)
\Edge[](v1)(v0)
\Edge[](v9)(v0)
\Edge[](v9)(v10)
\Edge[](v8)(v10)
\end{tikzpicture}
}}

\caption{The two instances of configuration $H$.}
\label{figH}
\end{figure}
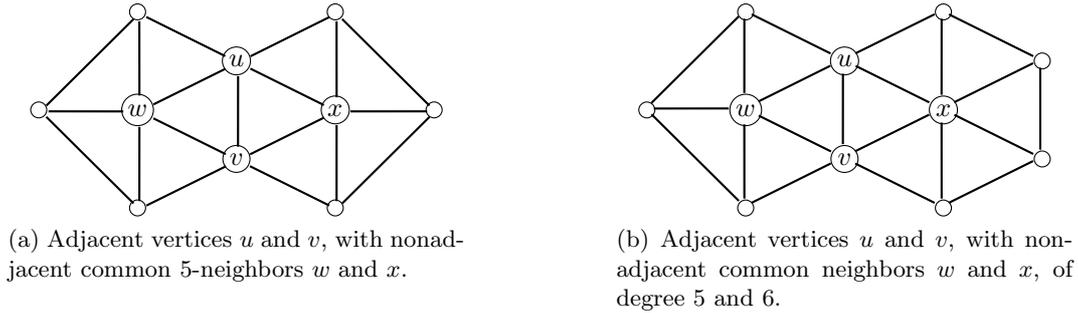
Assume that $G$ has minimum degree 5 and no separating 3-cycle, but also has no
copy of $H$.  This assumption leads to a contradiction, which implies the
result.  An immediate consequence of this assumption (by Pigeonhole) is that
the number of 5-neighbors of each vertex $v$ is at most $\floor{\frac{d(v)}2}$.
 Below, when we verify that each vertex finishes with nonnegative
charge, we consider both the degree of $v$ and its number of 5-neighbors.
We write {\it $(a,b)$-vertex} to denote a vertex of degree $a$ that has $b$
5-neighbors.

We assign to each vertex $v$ a charge $\ch(v)$, where $\ch(v)=d(v)-6$.  
Note that $\sum_{v\in V}\ch(v) = 2|E(G)| - 6|V(G)|$.
Since $G$ is a plane triangulation, Euler's formula implies that 
$2|E(G)| - 6|V(G)| = -12$.  
Now we redistribute the charge, without changing the sum, so that each vertex
finishes with nonnegative charge. 
This redistribution is called \emph{discharging}, and we write $\ch^*(v)$ to
denote the charge at each vertex $v$ after discharging.  Since each vertex
finishes with nonnegative charge, we get the obvious contradiction 
$-12=\sum_{v\in V}\ch(v)=\sum_{v\in V}\ch^*(v) \ge 0$.
 We redistribute the charge via the following three discharging rules, which we
apply simultaneously everywhere they are applicable.
\begin{enumerate}
\item[(R1)] Each $7^+$-vertex gives charge $\frac13$ to each 5-neighbor.
\item[(R2)] Each $7^+$-vertex gives charge $\frac17$ to each 6-neighbor that has at
least one 5-neighbor.
\item[(R3)] Each $6$-vertex gives charge $\frac27$ to each 5-neighbor.
\end{enumerate}
\bigskip

We now verify that after discharging, each vertex $v$ has nonnegative charge.
We repeatedly use that $G$ has no copy of configuration $H$. In particular, this
implies that the number of 5-neighbors for each vertex $v$ is at most
$\frac{d(v)}2$.
\bigskip

${\bf d(v)=5}$:  
Each $(5,0)$-vertex $v$ has five $6^+$-neighbors, 
so $\ch^*(v) \ge -1 + 5\left(\frac27\right) > 0$.
Each $(5,1)$-vertex $v$ has four $6^+$-neighbors, at least two of which are
$7^+$-neighbors; so
$\ch^*(v) \ge -1 + 2\left(\frac13\right) + 2\left(\frac27\right) > 0$. 
Each $(5,2)$-vertex $v$ has three $7^+$-neighbors (otherwise $G$ contains a copy
of $H$), so  
$\ch^*(v) = -1 + 3\left(\frac13\right) = 0$. 

${\bf d(v)=6}$: 
Each $(6,0)$-vertex $v$ has $\ch^*(v) = \ch(v) = 0$.
Each $(6,1)$-vertex $v$ has at least two $7^+$-neighbors, 
so $\ch^*(v) \ge 0 + 2\left(\frac17\right) - \left(\frac27\right) = 0$.
Each $(6,2)$-vertex $v$ has four $7^+$-neighbors, so    
$\ch^*(v) = 0 + 4\left(\frac17\right) - 2\left(\frac27\right) = 0$.

${\bf d(v)=7}$: 
Each $(7,0)$-vertex $v$ has $\ch^*(v) \ge 1 - 7\left(\frac17\right)=0$. 
Each $(7,1)$-vertex $v$ has six $6^+$-neighbors, at least two of which are
$7^+$-vertices (namely, the neighbors that are two further clockwise and two
further counterclockwise around $v$ from the $5$-vertex; otherwise $G$ has a
copy of $H$).  So $\ch^*(v) \ge 1 - 1\left(\frac13\right) -
4\left(\frac17\right)>0$.  Each $(7,2)$-vertex has five $6^+$-neighbors, at
least three of which are $7^+$-vertices; so $\ch^*(v) \ge 1 -
2\left(\frac13\right) - 2\left(\frac17\right)>0$. 
Each $(7,3)$-vertex has four $7^+$-neighbors, so
$\ch^*(v) = 1 - 3\left(\frac13\right)=0$. 

${\bf d(v)= 8}$: Now $v$ has at most four 5-neighbors, and gives each of these charge
$\frac13$; also $v$ gives each other neighbor charge at most $\frac17$.  Thus
$\ch^*(v)\ge 8-6-4(\frac13)-4(\frac17)>0$.

${\bf d(v)\ge 9}$: Now $v$ gives each neighbor charge at most $\frac13$, so
$\ch^*(v)\ge d(v)-6-d(v)(\frac13) = \frac23(d(v)-9)\ge 0$.
\smallskip

Thus $-12 = \sum_{v\in V}\ch(v) = \sum_{v\in V}\ch^*(v) \geq 0$.  This
contradiction implies the result.
\end{proof}

\section{Discharging}
\label{discharging}

In this section we present the discharging argument for the proof of
Theorem~\ref{mainthm2}.  
It is convenient to collect all of the reducibitiy lemmas that we use
to analyze the discharging (but prove later).

\begin{lemma8}
Every independent set $J$ in a minimal $G$ with $|J| = 2$, satisfies $|N(J)| \ge 9$.
\end{lemma8}

\begin{lemma9}
A minimal $G$ cannot have two nonadjacent $5$-vertices with at least two common 
neighbors. In particular, each vertex $v$ in $G$ has $\frac12d(v)$ or more
$6^+$-neighbors.
\end{lemma9}

\begin{lemma17}
Every minimal $G$ has no $6$-vertex $v$ with $6^-$-neighbors $u_1$, $u_2$, and
$u_3$ that are pairwise nonadjacent.
\end{lemma17}

\begin{lemma18}
Every minimal $G$ has no $6$-vertex v with pairwise nonadjacent neighbors $u_1$,
$u_2$, and $u_3$, where $d(u_1) = 5$, $d(u_2) \le 6$, and $d(u_3) = 7$.
\end{lemma18}

\begin{lemma19}
Let $u_1$ be a $6$-vertex with nonadjacent vertices $u_2$ and $u_3$ each at
distance two from $u_1$, where $u_2$ is a $5$-vertex and $u_3$ is a $6^-$-vertex.
A minimal $G$ cannot have $u_1$ and $u_2$ with two common neighbors, and also
$u_1$ and $u_3$ with two common neighbors.
\end{lemma19}

\begin{lemma20}
Every minimal $G$ has no $7$-vertex $v$ with a $5$-neighbor and two other
$6^-$-neighbors, $u_1$, $u_2$, and $u_3$, that are pairwise nonadjacent.
\end{lemma20}

\begin{lemma21}
Let $v_1$, $v_2$, $v_3$ be the corners of a $3$-face, each a $6^+$-vertex. Let
$u_1$, $u_2$, $u_3$ be the other pairwise common neighbors of $v_1$, $v_2$,
$v_3$, i.e., $u_1$ is adjacent to $v_1$ and $v_2$, $u_2$ is adjacent to $v_2$
and $v_3$, and $u_3$ is adjacent to $v_3$ and $v_1$. We cannot have
$|N(\{u_1,u_2,u_3\})| \le 13$. In particular, we cannot have $d(u_1) = d(u_2) =
5$ and $d(u_3) \le 6$.
\end{lemma21}

\begin{lemma22}
Let $u_1$ be a $7$-vertex with nonadjacent $5$-vertices $u_2$ and $u_3$ each
at distance two from $u_1$. A minimal $G$ cannot have $u_1$ and $u_2$ with two
common neighbors and also $u_1$ and $u_3$ with two common neighbors.
\end{lemma22}

\begin{lemma23}
Suppose that a minimal $G$ contains a $7$-vertex $v$ with no $5$-neighbor. Now
$v$ cannot have at least five $6$-neighbors, each of which has a $5$-neighbor.
\end{lemma23}

\begin{mainfirst}
Every planar graph $G$ has independence ratio at least $\frac{3}{13}$.
\end{mainfirst}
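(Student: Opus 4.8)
The plan is to argue by discharging on a minimal counterexample, following the same template as the warmup (the proof of Lemma~A) but calibrated to the ratio $\frac{3}{13}$ and powered by the reducibility lemmas collected above. Suppose the theorem fails and let $G$ be a counterexample minimizing $|V(G)|$. A disconnected counterexample splits off a smaller component that is again a counterexample, so $G$ is connected; and since adding edges cannot increase $\alpha$ while keeping $|V(G)|$ fixed, we may assume $G$ is edge-maximal planar, i.e., a plane triangulation. (No counterexample has $|V(G)|\le 4$, since then $\alpha(G)\ge 1>\frac{3}{13}|V(G)|$, so $|V(G)|\ge 5$ and ``triangulation'' makes sense.)

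By Lemma~\ref{CrunchToOneVertexNeighborhoodSize}, $G$ has minimum degree $5$, and by Lemma~\ref{alphaNoTriangles}, $G$ has no separating $3$-cycle; consequently the neighbors of every vertex induce a cycle, so the notions of isolated, non-isolated, and crowded neighbor are well defined. Minimality is used everywhere through the basic estimate that for every independent set $I$ one has $\alpha(G)\ge |I|+\alpha(G-N[I])\ge |I|+\frac{3}{13}(|V(G)|-|N[I]|)$; this is the mechanism behind all of the reducibility lemmas (for instance, behind Lemma~8's assertion that an independent $2$-set $J$ must satisfy $|N(J)|\ge 9$).

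For the discharging, give each vertex $v$ the charge $\ch(v)=d(v)-6$; since $G$ is a triangulation, Euler's formula yields $\sum_{v\in V(G)}\ch(v)=2|E(G)|-6|V(G)|=-12$. I would fix discharging rules refining (R1)--(R3) from the warmup: each $7^+$-vertex sends a fixed positive amount to each of its $5$-neighbors and a smaller amount to each $6$-neighbor that itself has a $5$-neighbor; each $6$-vertex sends a fixed amount to each of its $5$-neighbors; and, to keep $7$-vertices solvent, a $7$-vertex sends nothing to a \emph{crowded} $5$-neighbor (the obstruction shown in Figure~\ref{fig:whycrowded}), while $6$-vertices with sufficiently few low-degree neighbors are shielded from overspending. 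The amounts must be chosen so that after discharging every vertex has nonnegative charge, which contradicts $\sum_{v}\ch^*(v)=\sum_v\ch(v)=-12$.

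The heart of the argument --- and the main obstacle --- is verifying $\ch^*(v)\ge 0$ for every $v$, organized by $d(v)$. When $d(v)\ge 8$ it is routine: by Lemma~9, $v$ has at most $\floor{d(v)/2}$ $5$-neighbors, so the total $v$ sends out stays comfortably below $d(v)-6$. The tight cases are $d(v)\in\{5,6,7\}$, where the reducibility lemmas do the real work. A $5$-vertex has at least $\ceil{d(v)/2}=3$ neighbors of degree $\ge 6$ (Lemma~9), and I must show these deliver total charge at least $1$; when $v$ has two $5$-neighbors this forces enough of its remaining neighbors to be $7^+$-vertices via Lemmas~17--19 and~21. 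A $6$-vertex begins at charge $0$, so it must receive back exactly what it hands to $5$-neighbors; Lemmas~17, 18, 19 and~21 bound how $5$- and $6$-vertices can cluster around a $6$-vertex, which is precisely what lets this ledger balance. The hardest case is $d(v)=7$: such a $v$ has surplus only $1$ yet can be adjacent to as many as three $5$-vertices, so one invokes Lemmas~20,~22 and~23, together with the crowded-neighbor rule, to prove that $v$ never spends more than $1$. Once all degree cases are settled, the contradiction $-12=\sum_{v}\ch(v)=\sum_v\ch^*(v)\ge 0$ completes the proof of Theorem~1, and hence of Theorem~\ref{mainthm2}.
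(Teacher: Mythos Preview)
Your outline has the right architecture---minimal counterexample, triangulation, $\delta(G)=5$, no separating $3$-cycle, initial charge $d(v)-6$, and discharging---but the actual content of the proof is the \emph{specific} discharging rules and the case verification, and you have left both unspecified. Saying ``the amounts must be chosen so that after discharging every vertex has nonnegative charge'' is precisely the problem to be solved, not its solution. In the paper the rules are not simply a recalibration of the warmup's (R1)--(R3): the authors need five rules, including two \emph{second-phase} rules (R4) and (R5) in which $5$-vertices and $6$-vertices that end up with surplus after the first phase redistribute it back to needy $6$-neighbors. Without this feedback mechanism the $6$-vertex ledger does not balance.

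You also misdiagnose the bottleneck. The $7$-vertex case is handled fairly cleanly via Lemmas~20, 22, 23 and the crowded-neighbor exemption. The genuinely hard case is $d(v)=6$ with two adjacent $5$-neighbors: the paper splits this into six subcases (i)--(vi) according to the degrees of the remaining four neighbors, and subcases (iv)--(vi) require tracking charge two steps out (the vertices $w_1,\ldots,w_6$ in Figure~\ref{fig:6vert}), invoking Lemma~19 and Lemma~21 repeatedly to bound the number of low-degree vertices there, and then using (R4)--(R5) to recover the missing $\frac18$'s. Your sketch's claim that for a $6$-vertex ``Lemmas~17, 18, 19 and~21 \ldots\ let this ledger balance'' is not borne out by any one-phase rule set; the paper's analysis shows exactly where and why the extra machinery is needed. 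As written, your proposal is a plan rather than a proof.
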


\begin{proof}
We assume that the theorem is false, and let $G$ be a minimal counterexample to
the theorem; by ``minimal'' we mean having the fewest vertices and, subject to
that, the fewest non-triangular faces (thus, $G$ is a triangulation).
We will use discharging with initial charge $\ch(v)=d(v)-6$. 
We use the following five discharging rules to guarantee that each
vertex finishes with nonnegative charge, which yields a contradiction.

\begin{enumerate}
\item[(R1)] 
Each 6-vertex gives $\frac12$ to each 5-neighbor unless either they share a common
6-neighbor and no common 5-neighbor or else the 5-neighbor receives charge from
at least four vertices; in either of these cases, the 6-vertex gives the
5-neighbor $\frac14$.

\item[(R2)] 
Each $8^+$-vertex $v$ gives $\frac14 + \frac{h_w}{8}$ to each $6^-$-neighbor
$w$ where $h_w$ is the number of $7^+$-vertices in $N(v) \cap N(w)$.

\item[(R3)] 
Each 7-vertex gives $\frac12$ to each isolated 5-neighbor; gives 0 to each
crowded 5-neighbor; gives $\frac14$ to each other 5-neighbor; and gives
$\frac14$ to each 6-neighbor unless neither the 7-vertex nor the 6-vertex has a
5-neighbor.

\item[(R4)] 
After applying (R1)--(R3), each 5-vertex with positive charge
splits it equally among its 6-neighbors that gave it $\frac12$.

\item[(R5)] 
After applying (R1)--(R4), each 6-vertex with positive charge
splits it equally among its 6-neighbors with negative charge.
\end{enumerate}

Now we show that after applying these five discharging rules, each vertex $v$
finishes with nonnegative charge, i.e., $\ch^*(v)\ge 0$.  (It is worth noting
that if some vertex $v$ has nonnegative charge after applying only (R1)--(R3),
then $v$ also has nonnegative charge after applying (R1)--(R5), i.e.,
$\ch^*(v)\ge 0$.  In fact, the analysis for most cases only needs (R1)--(R3).
The final two rules are used only in Cases (iv)--(vi), near the end of the
proof.) Since the sum of the initial charges is $-12$, this contradicts our
assumption that $G$ was a minimal counterexample.  Subject to proving the
needed reducibility lemmas, this contradiction completes the proof of
Theorem~\ref{mainthm2}.
\bigskip

$\mathbf{d(v)\ge 8}$:
We will show that $v$ gives away charge at most $\frac{d(v)}4$.  To see that it
does, let $v$ first give charge $\frac14$ to each neighbor.  Now let each
$6^-$-neighbor $w$ take $\frac18$ from each $7^+$-vertex in $N(v) \cap N(w)$. 
Since $G[N(v)]$ is a cycle, each $7^+$-neighbor gives away at most the
$\frac14$ it got from $v$.  Each neighbor of $v$ has received at least as much
charge as by rule (R2) and $v$ has given away charge $\frac{d(v)}{4}$. 
Now $\ch^*(v)\ge \ch(v)-\frac14d(v)=d(v)-6-\frac14d(v)=\frac34(d(v)-8)\ge 0$.
\smallskip

$\mathbf{d(v)=7}$:
Let $u_1,\ldots, u_7$ denote the neighbors of $v$ in clockwise order.  First
suppose that $v$ has an isolated 5-neighbor.  By Lemma~\ref{alpha7566}, the
subgraph induced by the remaining $6^-$-neighbors must have independence number
at most 1.  Hence $v$ gives away charge at most either $\frac12+\frac12$ or
$\frac12+2(\frac14)$; in either case, $\ch^*(v)\ge \ch(v)-1 = 0$.  Assume
instead that $v$ has no isolated 5-neighbor.  Suppose first that $v$ has a
(non-isolated) 5-neighbor.  Now $v$ has at most five total $6^-$-neighbors,
again by Lemma~\ref{alpha7566}.  If $v$ has at most four $6^-$ neighbors, then,
since each $6^-$-neighbor receives charge at most $\frac14$, 
we have $\ch^*(v)\ge \ch(v)-4(\frac14)=0$.
By
Lemma~\ref{alpha7566}, if $v$ has exactly five $6^-$-neighbors, then one is a
crowded 5-neighbor, which receives no charge from $v$.  So, again, 
$\ch^*(v)\ge 1-4(\frac14)=0$.
Finally, suppose that $v$ has only $6^+$-neighbors.
By Lemma~\ref{reduce7vertex}, $v$ gives charge to at most four $6$-neighbors, so
$\ch^*(v)\ge \ch(v)-4(\frac14) = 0$.
\smallskip

$\mathbf{d(v)=5}$:
Since $\ch(v)=-1$, we must show that $v$ receives total charge at least 1.
Let $u_1, \ldots, u_5$ be the neighbors of $v$.  First suppose that $v$ has
five $6^+$-neighbors.  Now $v$ will receive charge at least $4(\frac14)$ unless
exactly two of these neighbors are 7-vertices for which $v$ is a crowded 5-neighbor. 
However, in this case the other three neighbors are all $6$-neighbors, so 
$\ch^*(v)\ge -1+2(\frac14)+\frac12=0$.  Now suppose that $v$ has exactly four
$6^+$-neighbors, say $u_1,\ldots,u_4$.  If $v$ receives charge from each, then
$\ch^*(v)\ge -1 + 4(\frac14)=0$; so suppose that $v$ receives charge from at
most three neighbors.  In total, $v$ receives charge at least $\frac12$ from $u_1$
and $u_2$: at least $2(\frac14)$ if $u_1$ is not a 6-vertex and at least $\frac12+0$ if
$u_1$ is a 6-vertex.  Similarly, $v$ receives at least $\frac12$ in total from
$u_3$ and $u_4$; so, $\ch^*(v)\ge -1 +2(\frac12)=0$.  Now suppose that $v$ has
exactly three $6^+$-neighbors, say $u_1, u_2, u_3$.  Lemma~\ref{alpha55} implies that
$u_1$, $u_2$, $u_3$ are consecutive neighbors of $v$.  If $u_1$
and $u_3$ are both 6-vertices, then $v$ receives charge $\frac12$ from
each.  If both are $7^+$-vertices, then $v$ receives charge $\frac14$ from each
and charge $\frac12$ from $u_2$.  So assume that exactly one of $u_1$ and $u_3$
is a 6-vertex, say $u_1$.  Now $v$ receives charge $\frac12$ from $u_1$ and
charge $\frac14$ from each of $u_2$ and $u_3$, for a total of $\frac12+2(\frac14)$.
In every case $\ch^*(v)\ge \ch(v)+1=0$.
\smallskip

$\mathbf{d(v)=6}$:
Note that (R5) will never cause a 6-vertex to have
negative charge.  Thus, in showing that a 6-vertex has nonnegative charge, we
need not consider it.

Clearly, a 6-vertex with no 5-neighbor finishes (R1)--(R3) with nonnegative
charge.  Suppose that $v$ is a 6-vertex with exactly one 5-neighbor.  We will
show that $v$ finishes (R1)--(R3) with charge at least $\frac14$.  Let
$u_1,\ldots, u_6$ denote the neighbors of $v$ and assume that $u_1$ is the only
5-vertex.  By Lemma~\ref{alpha666}, at least one of $u_1, u_3, u_5$ is a
$7^+$-vertex, so it gives $v$ charge $\frac14$.  If one of $u_6$ and $u_2$ is a
6-vertex, then $v$ gives charge only $\frac14$ to $u_1$, finishing with charge at
least $2(\frac14)-\frac14$.  Otherwise, $v$ receives charge at least $\frac14$
from each of $u_6$ and $u_2$, so finishes with charge at least
$3(\frac14)-\frac12$.  Similarly, if $v$ has no 5-neighbor and at least one
$8^+$-neighbor, then $v$ finishes (R1)--(R3) with charge at least $\frac14$.

Now suppose that $v$ has at least two 5-neighbors.
By Lemma~\ref{alpha55}, At most one of $u_1, u_3, u_5$ can be a 5-vertex. 
Similarly, for $u_2, u_4, u_6$; hence, assume that $v$ has exactly two
5-neighbors.  These 5-neighbors can either be ``across'', say $u_1$
and $u_4$, or ``adjacent'', say $u_1$ and $u_2$.

Suppose that $v$ has 5-neighbors $u_1$ and $u_4$.  Note that all of its
remaining neighbors must be $6^+$-vertices. At least one of $u_1, u_3,
u_5$ must be a $7^+$-vertex; similarly for $u_2,u_4,u_6$.  Now we show that the
total net charge that $v$ gives to $u_3, u_4, u_5$ is 0.  Similarly, the total
net charge that $v$ gives to $u_6, u_1, u_2$ is 0.  If both $u_3$ and $u_5$ are
$7^+$-vertices, then $v$ gets $\frac14$ from each and gives $\frac12$ to $u_4$.
Otherwise, one of $u_3$ and $u_5$ is a 6-vertex and the other is a $7^+$-vertex;
now $v$ gets $\frac14$ from the $7^+$-vertex and gives only $\frac14$ to $u_4$.
The same is true for $u_6, u_1, u_2$.  Thus, $v$ finishes with charge 0.

\begin{figure}
\centering
\begin{tikzpicture}[scale = 13]
\tikzstyle{VertexStyle} = []
\tikzstyle{EdgeStyle} = []
\tikzstyle{labeledStyle}=[shape = circle, minimum size = 6pt, inner sep = 1.2pt, draw]
\tikzstyle{unlabeledStyle}=[shape = circle, minimum size = 6pt, inner sep = 1.2pt, draw, fill]
\Vertex[style = labeledStyle, x = 0.550, y = 0.850, L = \small {$u_1$}]{v0}
\Vertex[style = labeledStyle, x = 0.650, y = 0.850, L = \small {$u_2$}]{v1}
\Vertex[style = labeledStyle, x = 0.600, y = 0.750, L = \small {$v$}]{v2}
\Vertex[style = labeledStyle, x = 0.700, y = 0.750, L = \small {$u_3$}]{v3}
\Vertex[style = labeledStyle, x = 0.650, y = 0.650, L = \small {$u_4$}]{v4}
\Vertex[style = labeledStyle, x = 0.550, y = 0.650, L = \small {$u_5$}]{v5}
\Vertex[style = labeledStyle, x = 0.500, y = 0.750, L = \small {$u_6$}]{v6}
\Vertex[style = labeledStyle, x = 0.600, y = 0.950, L = \small {$w_2$}]{v7}
\Vertex[style = labeledStyle, x = 0.450, y = 0.900, L = \small {$w_1$}]{v8}
\Vertex[style = labeledStyle, x = 0.750, y = 0.900, L = \small {$w_3$}]{v9}
\Vertex[style = labeledStyle, x = 0.750, y = 0.650, L = \small {$w_4$}]{v10}
\Vertex[style = labeledStyle, x = 0.600, y = 0.550, L = \small {$w_5$}]{v11}
\Vertex[style = labeledStyle, x = 0.450, y = 0.650, L = \small {$w_6$}]{v12}
\Edge[label = \small {}, labelstyle={auto=right, fill=none}](v0)(v2)
\Edge[label = \small {}, labelstyle={auto=right, fill=none}](v1)(v2)
\Edge[label = \small {}, labelstyle={auto=right, fill=none}](v3)(v2)
\Edge[label = \small {}, labelstyle={auto=right, fill=none}](v4)(v2)
\Edge[label = \small {}, labelstyle={auto=right, fill=none}](v5)(v2)
\Edge[label = \small {}, labelstyle={auto=right, fill=none}](v6)(v2)
\Edge[label = \small {}, labelstyle={auto=right, fill=none}](v1)(v9)
\Edge[label = \small {}, labelstyle={auto=right, fill=none}](v3)(v9)
\Edge[label = \small {}, labelstyle={auto=right, fill=none}](v7)(v9)
\Edge[label = \small {}, labelstyle={auto=right, fill=none}](v0)(v1)
\Edge[label = \small {}, labelstyle={auto=right, fill=none}](v3)(v1)
\Edge[label = \small {}, labelstyle={auto=right, fill=none}](v7)(v1)
\Edge[label = \small {}, labelstyle={auto=right, fill=none}](v0)(v7)
\Edge[label = \small {}, labelstyle={auto=right, fill=none}](v0)(v8)
\Edge[label = \small {}, labelstyle={auto=right, fill=none}](v0)(v6)
\Edge[label = \small {}, labelstyle={auto=right, fill=none}](v6)(v8)
\Edge[label = \small {}, labelstyle={auto=right, fill=none}](v7)(v8)
\Edge[label = \small {}, labelstyle={auto=right, fill=none}](v3)(v10)
\Edge[label = \small {}, labelstyle={auto=right, fill=none}](v4)(v10)
\Edge[label = \small {}, labelstyle={auto=right, fill=none}](v3)(v4)
\Edge[label = \small {}, labelstyle={auto=right, fill=none}](v5)(v4)
\Edge[label = \small {}, labelstyle={auto=right, fill=none}](v4)(v11)
\Edge[label = \small {}, labelstyle={auto=right, fill=none}](v5)(v11)
\Edge[label = \small {}, labelstyle={auto=right, fill=none}](v5)(v6)
\Edge[label = \small {}, labelstyle={auto=right, fill=none}](v5)(v12)
\Edge[label = \small {}, labelstyle={auto=right, fill=none}](v6)(v12)
\end{tikzpicture}
\caption{The closed neighborhood of $v$ and some nearby vertices.}
\label{fig:6vert}
\end{figure}
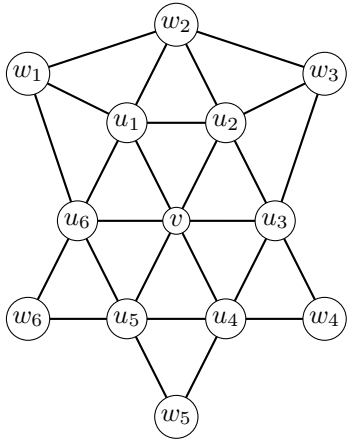
Suppose instead that $v$ has 5-neighbors $u_1$ and $u_2$.
By Lemmas~\ref{alpha666} and \ref{alpha6567} either both of $u_3$ and
$u_5$ are $7^+$-vertices or one is a $6$-vertex and the other an $8^+$-vertex.  
The same holds for $u_4$ and $u_6$.  
Let $w_1,\ldots,w_6$ be the common neighbors of successive pairs of vertices in
the list $u_6, u_1, u_2, u_3, u_4, u_5, u_6$.
Note that $w_1 \adj w_2$, since $u_1$ is a 5-vertex and
$\{v,u_6,w_1,w_2,u_2\}\subseteq N(u_1)$.  Similarly, $w_2\adj w_3$.
(See Figure~\ref{fig:6vert}.)
By Lemma~\ref{alpha55}, since $G$ has no separating 3-cycle, $w_1$ and $w_3$ are
$6^+$-vertices.
Consider the possible degrees for $u_3$, $u_4$, $u_5$, $u_6$.  Up to symmetry,
they are (i) $7^+, 7^+, 7^+, 7^+$, (ii) $7^+, 8^+, 7^+, 6$, (iii) $8^+, 7^+, 6, 7^+$,
(iv) $8^+, 6, 6, 8^+$, (v) $6, 6, 8^+, 8^+$, and (vi) $6, 8^+, 8^+, 6$.

In \textbf{Case (i)}, $v$ receives charge at least $4(\frac14)$, so $\ch^*(v)\ge 0$.
In \textbf{Case (ii)}, $v$ receives charge at least
$\frac14+(\frac14+\frac18+\frac18)+\frac14$, so $\ch^*(v)\ge
1-(\frac12+\frac14)\ge 0$.
In \textbf{Case (iii)}, $v$ receives charge at least
$(\frac14+\frac18)+\frac14+\frac14=\frac78$.  
Recall that $w_3$ is a $6^+$-vertex, by Lemma~\ref{alpha55}.  
If $w_2$ is a $6^+$-vertex, then $v$ gives only $\frac14$ to $u_2$, so
$\ch^*(v)\ge \frac78 - (\frac14+\frac12) = 0$.  So suppose that $w_2$ is a
5-vertex.  Now in each case $v$ gets charge at least $\frac18$ back from $u_2$,
via (R4).  If $w_3$ is a 6-vertex, then $u_2$ receives charge
$2(\frac12)+\frac14$ and sends back $\frac18$ to each of $v$ and $w_3$. 
Otherwise, $w_3$ is a $7^+$-vertex, so $u_3$ sends $u_2$ charge at least
$\frac38$, and $v$ gets back at least $\frac18$.  Thus, in each instance of
Case (iii), we have $\ch^*(v)\ge 0$.  So we are in Cases (iv), (v), or (vi).

\textbf{Case (iv): $8^+,6,6,8^+$.}
If $w_2$ is a $6^+$-vertex, then both $u_1$ and $u_2$ are sent charge by four
vertices and hence $v$ gives away at most $2(\frac14)$. Since $v$ gets at least
$\frac14$ from each of $u_3$ and $u_6$, we have $\ch^*(v)\ge
2(\frac14)-2(\frac14)=0$.  Hence, we assume that $w_2$ is a $5$-vertex.  

Now if $w_1$ is a $6$-vertex, then
$u_1$ receives charge $\frac54$, so gives back $\frac18$ to $v$.  If
instead $w_1$ is a $7^+$-vertex, then $u_1$ receives charge at least $\frac34$
from $v$ and $w_1$ together and then charge at least $\frac14 + \frac18$ from
$u_6$ for a total of $\frac98$. Since $u_1$ has only one $6$-neighbor, it
gives the extra $\frac18$ back to $v$ by (R4).  The same holds for $u_2$, so
$v$ gets $\frac18$ back from each of $u_1$ and $u_2$. 
So, the total charge that $v$ gets from $u_6$, $u_1$, $u_2$, $u_3$ is at least
$\frac14+\frac18+\frac18+\frac14=\frac34$.

Suppose that $u_4$ has at least two 5-neighbors. Now one of them,
call it $x$, is a common neighbor with either $u_3$ or $u_5$, so we can apply
Lemma~\ref{alpha665big} to $\set{v,w_2,x}$ (again $x\nonadj w_2$, since $w_2$
has two other 5-neighbors; $x$ cannot be identified with one of these other
5-neighbors, since $G$ has no separating 3-cycle).  
Similarly, $u_5$ has at most one
5-neighbor.  Hence, by our argument above, both $u_4$ and $u_5$ finish
(R1)--(R3) with charge at least $\frac14$.  Now we show that $u_4$ has at most
three 6-neighbors; similarly for $u_5$.

Suppose that $u_4$ has at least four $6$-neighbors.  
Define $y$ by $N(u_4) = \{v,u_3,w_3,y,w_4,u_5\}$.
Recall that $w_2 \adj w_1$ and $w_2 \adj w_3$, 
as noted before Case (i).
If $y$ is a $6$-vertex, then we can apply
Lemma~\ref{alpha665big} to $\{u_5,y,u_1\}$.  
(We cannot have $y=w_1$, since letting $J=\{u_2,u_5,w_1\}$ gives $|J|=3$
and $|N(J)|\le 6+6+5-1-2-3=11$, which contradicts
Lemma~\ref{CrunchToOneVertexNeighborhoodSize}.)
So instead, both $w_4$ and $w_5$ must be
$6$-vertices.  We can apply Lemma~\ref{alpha665big} to $\{v, w_2, w_5\}$
unless $w_2\adj w_5$, so assume this.  Also, we can apply
Lemma~\ref{alpha665big} to $\{v, w_2, w_4\}$ unless $w_2\adj w_4$; so assume
this.  
Hence, $N(w_2) \supseteq \{u_1, u_2, w_1, w_3, w_4, w_5\}$, which is a contradiction 
since $d(w_2)=5$.

Thus, we conclude that $u_4$ has
at most two 6-neighbors other than $u_5$, so at most two 6-neighbors that finish
(R1)--(R3) with negative charge.  An analogous argument holds for $u_5$.
%
Hence $v$ gets at least $\frac18$ from each of $u_4$ and $u_5$
via (R5), 
so $\ch^*(v)\ge 0-2(\frac12)+\frac34+2(\frac18)=0$.

\textbf{Case (v): $6,6,8^+,8^+$.}
Note that $v$ receives charge at least $2(\frac38)=\frac34$ from $u_5$ and
$u_6$.  If $w_2$ is a $6^+$-vertex, then $u_1$ receives charge from
four neighbors, so $v$ gives away charge at most $\frac14+\frac12$.  Thus
$\ch^*(v)\ge 0$.  So assume $w_2$ is a 5-vertex.
First, we show that $v$ gets back at least $\frac18$ from $u_1$.  If
$d(w_1)=6$, then $u_1$ gets charge $\frac12+\frac14+\frac12=\frac54$,
so returns charge $\frac18$ to each of $v$ and $w_1$.  Otherwise $w_1$ is a
$7^+$-vertex, so $u_6$ sends charge $\frac38$ to $u_1$, and $u_1$ returns at
least $\frac18$ to $v$.  Thus, the total charge that $v$ gets from $u_5$, $u_6$,
and $u_1$ is at least $2(\frac38)+\frac18=\frac78$.

If $w_3$ is a 6-vertex, then $v$ gets back charge $\frac18$ from
$u_2$, via (R4), so $\ch^*(v)0-2(\frac12)+\frac78+\frac18=0$.  Instead, assume
$w_3$ is a $7^+$-vertex.  Now we show that $v$ gets charge at least $\frac18$
from $u_3$ by (R5).  Let $y$ be the neighbor of $u_3$ other than $v, u_2, w_3,
w_4, u_4$.  Applying Lemma~\ref{alpha6567} to $\{u_2,u_4,y\}$, shows that $y$
is an $8^+$-vertex.  If $w_4$ is a 5-vertex, then we apply
Lemma~\ref{alpha665big} to $\{v,w_2,w_4\}$ to get a
contradiction ($w_4$ cannot be adjacent to $w_2$, since $w_2$ already has two
other 5-neighbors, and $w_4$ cannot be identified with $u_1$ or $w_2$, since
$G$ has no separating 3-cycles).  Hence $w_4$ is a $6^+$-vertex.  So $u_3$
receives charge at least $\frac14$ from $w_3$ and at least $\frac14+\frac18$
from $y$.  After $u_3$ gives charge $\frac14$ to $u_2$, it has charge at least
$\frac38$.  So, by (R5), $u_3$ gives each of its at most three 6-neighbors
(including $v$) charge at least $\frac13(\frac38)=\frac18$.  Thus, $\ch^*(v)\ge
-1+\frac78+\frac18 = 0$.

\textbf{Case (vi): $6,8^+,8^+,6$.}
First suppose that $w_2$ is a $6^+$-vertex.  Note that $v$ gets charge at least
$2(\frac38)$ from $u_4$ and $u_5$, so it suffices to show that $v$ gives net
charge at most $\frac38$ to each of $u_1$ and $u_2$.  We consider $u_1$; the
case for $u_2$ is symmetric.  If $w_1$ gives charge to $u_1$, then $u_1$ receives
charge from four neighbors, so it gets charge only $\frac14$ from $v$.  Recall
that $w_1$ must be a $6^+$-vertex, as noted before Case (i).  Thus $w_1$ fails
to give charge to $u_1$ only if $u_1$ is a crowded 5-neighbor of $w_1$; 
suppose this is the case.  So $w_1$ is a 7-vertex and $w_2$ is a 6-vertex.
Now $u_1$ gets charge $\frac12+\frac14+\frac12=\frac54$,
so $u_1$ returns charge $\frac18$ to each of $w_2$ and $v$, via (R4), as desired.  
By symmetry, $u_2$ also returns $\frac18$ to $v$.  
Thus $\ch^*(v)\ge 0-2(\frac12)+2(\frac38)+2(\frac18)=0$. 
So instead, assume that $w_2$ is a 5-vertex.

Now we show that $u_2$ returns $\frac18$ to $v$ via (R4).  By symmetry the same
is true of $u_1$.  If $w_3$ is a 6-vertex, then $v$ gets back $\frac18$ from
$u_2$, since $u_2$ receives $\frac12+\frac14+\frac12$ and returns
$\frac18$ to each of $w_3$ and $v$.  
So assume, that
$w_3$ is a $7^+$-vertex.  If $w_4\adj w_2$, then we apply Lemma~\ref{JIsTwo} to
$\{w_2, u_3\}$; so $w_4\nonadj w_2$.  If $w_4$ is a $6^-$-vertex, then we
apply Lemma~\ref{alpha665big} to $\{v,w_2,w_4\}$ to get a contradiction (as above,
$w_4$ cannot be identified with $u_1$ or $w_2$, since $G$ has no separating
3-cycle).   Thus, $w_4$ is a $7^+$-vertex.  So $u_3$ has at least three
$7^+$-neighbors and at most two 6-neighbors.  Thus, after $u_3$ gives charge
$\frac14$ to $u_2$, by (R5) it gives charge $\frac12(\frac12)=\frac14$ to $v$. 
So in each case, $u_3$ gives at least $\frac18$ to $v$ via (R5).  Since the
same is true of $u_6$, we have $\ch^*(v)\ge 0-2(\frac12)+2(\frac38)+2(\frac18)=0$.
\end{proof}

\section{Reducibility}
It is quite useful to know that a minimal counterexample has no separating
3-cycle; we prove this in Lemma~\ref{alphaNoTriangles}.  When proving coloring
results, such a lemma is nearly trivial.  However, for independence results, it
requires much more work.  Albertson proved an analogous lemma when showing
that planar graphs have independence ratio at least $\frac29$.  Our proof
generalizes his to the broader context of showing that a minor-closed family of graphs
has independence ratio at least $c$ for some rational $c$.  We will apply this
lemma to planar graphs and will let $c=\frac3{13}$.

\begin{lem}
Let $c > 0$ be rational. Let $\G$ be a minor-closed family of graphs.  If $G$ is a minimal
counterexample to the statement that every $n$-vertex graph in $\G$ has an
independent set of size at least $cn$, then $G$ has no separating 3-cycle.
\label{alphaNoTriangles}
\end{lem}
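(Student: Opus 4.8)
The plan is to assume for contradiction that the minimal counterexample $G$ contains a separating $3$-cycle $T=xyz$, and then to show $\alpha(G)\ge c\,|V(G)|$, contradicting that $G$ is a counterexample. Since $T$ is separating, partition the components of $G\setminus V(T)$ into two nonempty groups and let $A$ (resp.\ $B$) be $\{x,y,z\}$ together with the vertices of the first (resp.\ second) group, so that $A\cup B=V(G)$, $A\cap B=\{x,y,z\}$, and $G$ has no edge between $A\setminus T$ and $B\setminus T$; note $4\le |A|,|B|<|V(G)|$. Because $\mathcal{G}$ is minor-closed, every graph $G[A]\setminus S$ and $G[B]\setminus S$ with $S\subseteq V(T)$, and every $G[A]\setminus N_{G[A]}[t]$ and $G[B]\setminus N_{G[B]}[t]$ with $t\in V(T)$, is a minor of $G$ with fewer vertices; by minimality each satisfies the bound, giving $\alpha(G[A]\setminus S)\ge c(|A|-|S|)$, $\alpha(G[A]\setminus N_{G[A]}[t])\ge c(|A|-1-d_{G[A]}(t))$, and the analogous statements for $B$. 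The elementary \emph{gluing observation} is that if $I_A$ is independent in $G[A]$, $I_B$ is independent in $G[B]$, and $I_A\cap V(T)=I_B\cap V(T)$ is either empty or a single vertex $t$ with, in the latter case, $I_B\cap N_{G[B]}(t)=\emptyset$, then $I_A\cup I_B$ is independent in $G$. Writing $a_A(t)$ for the largest size of an independent set of $G[A]$ containing $t$, this yields $\alpha(G)\ge \alpha(G[A]\setminus V(T))+\alpha(G[B]\setminus V(T))$ and $\alpha(G)\ge a_A(t)+\alpha(G[B]\setminus N_{G[B]}[t])$ for each $t$ (and the mirror statements with $A,B$ swapped).

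First I would handle the easy case: if some maximum independent set of $G[A]$ avoids $V(T)$, then $\alpha(G[A]\setminus V(T))=\alpha(G[A])\ge c|A|$, and gluing with a maximum independent set of $G[B]\setminus V(T)$ gives $\alpha(G)\ge c|A|+c(|B|-3)=c|V(G)|$; symmetrically if some maximum independent set of $G[B]$ avoids $V(T)$. So assume that on each side every maximum independent set meets $V(T)$ in exactly one vertex; then $\alpha(G[A]\setminus V(T))=\alpha(G[A])-1$ and likewise for $B$. Let $X_A=\{t\in V(T): a_A(t)=\alpha(G[A])\}$ and define $X_B$ similarly; both are nonempty, and every maximum independent set of $G[A]$ uses a vertex of $X_A$. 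Hence $G[A]\setminus X_A$ (which contains $G[A]\setminus V(T)$ and has no maximum independent set of $G[A]$) has independence number exactly $\alpha(G[A])-1$, so the minor bound gives $\alpha(G[A])\ge c(|A|-|X_A|)+1$ and thus $\alpha(G[A]\setminus V(T))\ge c(|A|-|X_A|)$; similarly $\alpha(G[B]\setminus V(T))\ge c(|B|-|X_B|)$. Therefore
\[
\alpha(G)\ \ge\ \alpha(G[A]\setminus V(T))+\alpha(G[B]\setminus V(T))\ \ge\ c\bigl(|A|+|B|-|X_A|-|X_B|\bigr)\ =\ c\bigl(|V(G)|+3-|X_A|-|X_B|\bigr),
\]
which already yields the contradiction whenever $|X_A|+|X_B|\le 3$.

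The remaining, and hardest, case is $|X_A|+|X_B|\ge 4$; since $|V(T)|=3$, there is then a vertex $t\in X_A\cap X_B$. Here I would use the second gluing inequality: since $a_A(t)=\alpha(G[A])\ge c(|A|-|X_A|)+1$,
\[
\alpha(G)\ \ge\ c(|A|-|X_A|)+1+c\bigl(|B|-1-d_{G[B]}(t)\bigr)\ =\ c\bigl(|V(G)|+2-|X_A|-d_{G[B]}(t)\bigr)+1,
\]
and the mirror bound holds with $A,B,X_A,X_B$ interchanged. Either inequality closes the case unless $t$ has large degree on the corresponding side, and adding the two failure conditions forces $d_G(t)\ge 2/c+2-(|X_A|+|X_B|)$, a concrete lower bound on $d_G(t)$. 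The main obstacle is precisely this last configuration: using a single shared triangle vertex costs one unit of independence, which exceeds the slack $3c$ available once $c<\tfrac13$, so one must recover it by exploiting all three vertices of $T$ at once, the full supply of admissible minors (closed-neighborhood deletions and contractions of edges of $T$), the integrality of $\alpha(G)$, and — if needed — an a priori lower bound on the minimum degree of $G$ (itself a one-vertex deletion argument) together with choosing the separating triangle so that $\min(|A|,|B|)$ is as small as possible. I expect that the conceptual content is already present in the cases above, and that this final case is chiefly a matter of careful bookkeeping.
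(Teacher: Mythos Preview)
Your completed cases are correct: the gluing observation is fine, the ``easy case'' works, and the $X_A,X_B$ refinement giving $\alpha(G)\ge c(|V(G)|+3-|X_A|-|X_B|)$ is a nice touch that cleanly disposes of $|X_A|+|X_B|\le 3$.

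The gap is that the case $|X_A|+|X_B|\ge 4$ is not ``chiefly bookkeeping''; it is where essentially all the content of the lemma lives, and the tools you have set up do not close it. Your neighborhood-deletion bound yields only a \emph{lower} bound on $d_G(t)$ when both sides fail, and there is no upper bound on the degree of a vertex in a minimal counterexample, so this leads nowhere; invoking ``an a priori lower bound on the minimum degree'' points in the wrong direction, and minimizing $\min(|A|,|B|)$ does not obviously bound $d_{G[A]}(t)$ usefully either. Deletion-only arguments lose too much: deleting $V(T)$ costs $3c$ on each side, and for $c<\tfrac13$ you cannot recover that loss by choosing one good vertex of $T$.

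The paper's proof works differently and shows what is actually needed. It uses \emph{contractions} (this is where minor-closedness is essential, and you only use it for subgraphs): contracting the whole triangle to a point on one side gives $\alpha(G)\ge\lceil c(|A_i|-2)\rceil+\lceil c|A_j|\rceil-1$, and contracting a single triangle edge on each side gives $\alpha(G)\ge\lceil c(|A_1|-1)\rceil+\lceil c(|A_2|-1)\rceil-1$ (the latter requires a short but genuine case analysis to match up the triangle vertices across the two sides). These three families of bounds are then played against each other via a modular-arithmetic argument in the residues of $a|A_i|\pmod b$ (where $c=a/b$), exploiting integrality of $\alpha$ to force one of the bounds to reach $c|V(G)|$. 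That arithmetic step is the crux, not an afterthought; your outline mentions contractions and integrality only in passing and does not set up anything that would replace it.
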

\begin{proof}
Suppose to the contrary that $G$ has a separating 3-cycle $X$. Let $A_1$ and
$A_2$ be induced subgraphs of $G$ with $V(A_1) \cap V(A_2) = X$ and $A_1 \cup A_2  = G$.

Our plan is to find big independent sets in two smaller graphs in $\G$ (by minimality)
and piece those independent sets together to get an independent set in $G$ of
size at least $c|G|$ (for brevity, we write $|G|$ for $|V(G)|$).  More
precisely, we consider independent sets in each
$A_i$, either with $X$ deleted, or with some pair of vertices in $X$
identified.  In Claims 1--3, we prove lower bounds on $\alpha(G)$ in terms of
$|A_1|$ and $|A_2|$.  In Claim 4, we examine $|A_1|$ and $|A_2|$ modulo $b$,
where $c=\frac{a}b$ in lowest terms.  In each case, we show that one of the
independent sets constructed in Claims 1--3 has size at least $c|G|$.
Our proof relies heavily on the fact that $\alpha(H)$ is an integer (for every
graph $H$), which often allows us to gain slightly over $c|H|$.

\noindent\textbf{Claim 1. }\textit{$\alpha(G) \ge \ceil{c(|A_1| - 3)} +
\ceil{c(|A_2| - 3)}$.}

The union of the independent sets obtained by applying minimality of $G$ to
$A_1 \setminus X$ and $A_2 \setminus X$ is independent in $G$.

\noindent\textbf{Claim 2. }\textit{$\alpha(G) \ge \ceil{c(|A_i| - 2)} +
\ceil{c|A_j|} - 1$ whenever $\set{i,j} = \set{1,2}$.}

For concreteness, let $i=1$ and $j=2$; the other case is analogous.  Apply
minimality to $A_2$ to get an independent set $I_2$ in $A_2$ with $|I_2| \ge
\ceil{c|A_2|}$.  Form $A_1'$ from $A_1$ by contracting $X$ to a single vertex
$u$.  Apply minimality to $A_1'$ to get an independent set $I_1$ in $A_1'$ with
$|I_1| \ge \ceil{c(|A_1| - 2)}$.  If $u \in I_1$, then $I_1 \cup I_2 \setminus
\set{u}$ is independent in $G$ and has the desired size.  Otherwise, $I_1 \cup
I_2\setminus X$ is an independent set of the desired size in $G$.

\noindent\textbf{Claim 3. }\textit{$\alpha(G) \ge \ceil{c(|A_1| - 1)} + \ceil{c(|A_2| - 1)} - 1$.}

Let $X = \set{x_1,x_2,x_3}$.  For each $k \in \set{1,2}$ and $t \in \set{2,3}$,
form $A_{k,t}$ from $A_k$ by contracting $x_1x_t$ to a vertex $x_{k,t}$.
Applying minimality to $A_{k,t}$ gives an independent set $I_{k,t}$ in
$A_{k,t}$ with $\card{I_{k,t}} \ge \ceil{c(|A_k| - 1)}$.

If at most one of $I_{1,t}$ and $I_{2,t}$ contains a vertex of $X$ (or a
contraction of two vertices in $X$), then to
get a big independent set, we take their union, discarding this at most one
vertex.  Formally,
if $\set{x_{k,t}, x_{5-t}} \cap I_{k,t} = \emptyset$, then $(I_{1,t} \cup
I_{2,t}) \setminus X$ is an independent set in $G$ of the desired size.  So assume
that each of $I_{1,t}$ and $I_{2,t}$ contains a vertex (or a contraction of an
edge) of $X$.

Now we look for a vertex $x_{\ell}$ of $X$ such that each of $I_{1,t}$ and $I_{2,t}$
contains $x_{\ell}$ or a contraction of $x_{\ell}$.
Formally,
if $x_{5-t} \in I_{1,t} \cap I_{2,t}$, then $(I_{1,t} \cup I_{2,t}) \setminus
X$ 
is an independent set in $G$ of the desired size.
Similarly, if $x_{1,t} \in I_{1,t}$ and $x_{2,t} \in I_{2,t}$, then
$(I_{1,t} \cup I_{2,t} \cup \set{x_1}) \setminus \set{x_{1,t}, x_{2,t}}$ is an
independent set in $G$ of the desired size.  

So, by symmetry, we may assume that $x_{1,2} \in I_{1,2}$ and $x_3 \in
I_{2,2}$.  Also, either $x_{1,3} \in I_{1,3}$ or $x_{2,3} \in I_{2,3}$.  If
$x_{1,3} \in I_{1,3}$, then $(I_{2,2} \cup I_{1,3})\setminus \set{x_{1,3}}$ is an
independent set in $G$ of the desired size.  Otherwise, $x_{2,3} \in I_{2,3}$
and $(I_{1,2} \cup I_{2,3} \cup \set{x_1})\setminus \set{x_{1,2}, x_{2,3}}$ is an
independent set in $G$ of the desired size.

\noindent\textbf{Claim 4. }\textit{The lemma holds.}

Let $a$ and $b$ be positive integers such that
$c = \frac{a}{b}$ and $\gcd(a,b) = 1$.  For each $i \in \set{1,2}$, let
$N_i = |A_i| - 3$ and for each $j \in \set{0,1,2,3}$, choose $k_i^j$ such that
$1 \le k_i^j \le b$ and $k_i^j \equiv a(N_i + j) \pmod{b}$.  In other words,
$\ceil{c(N_i+j)}=\frac{a}{b}(N_i + j)+\frac{b-k_i^j}b$.  Intuitively, if there exist
$i$ and $j$ such that $k_i^j$ is small compared to $b$, then we improve our
lower bound on the independence number (in some smaller graph) by the fact that
the independence number is always an integer.  In the present claim, we show
that if some $k_i^j$ is small, then $G$ has an independent set of the desired
size.  In contrast, if all $k_i^j$ are big, then we get a contradiction.

By symmetry, we may assume that $k_1^0 \le k_2^0$.

\noindent\textbf{Subclaim 4a. }\textit{$k_1^0 + k_2^0 \ge 2b + 1 - 3a$ and
$k_1^1 + k_2^3 \ge b + a + 1$ and $k_1^3 + k_2^1 \ge b + a + 1$ and $k_1^2 + k_2^2 \ge b + a + 1$.}

If any independent set constructed in Claims 1--3 has size at least $c|G|$, then
we are done.  So we assume not; more precisely, we assume that each of these
independent sets has size at most $\frac{a|G|-1}{b}$.  Each of the
four desired bounds follow from simplifying the inequalities in Claims 1--3. 
Note that $|G|=N_1+N_2+3$.

By Claim 1, we have $\alpha(G) \ge \ceil{c(|A_1| - 3)} + \ceil{c(|A_2| - 3)}
= \frac{a}{b}(N_1 + N_2) + \frac{b-k_1^0}{b} + \frac{b-k_2^0}{b} =
\frac{a}{b}|G| + \frac{2b -3a - k_1^0 - k_2^0}{b}$.  
Hence $k_1^0 + k_2^0 \ge 2b + 1 - 3a$.

By Claim 2, we have $\alpha(G) \ge \ceil{c(|A_1| - 2)} + \ceil{c|A_2|} - 1 =
\frac{a}{b}(N_1 + 1 + N_2 + 3) + \frac{b-k_1^1}{b} + \frac{b-k_2^3}{b} - 1 = \frac{a}{b}|G| + \frac{2b + a - k_1^1 - k_2^3}{b} - 1$.  
Hence $k_1^1 + k_2^3 \ge b + a + 1$.  Similarly, $k_1^3 + k_2^1 \ge b + a + 1$.

By Claim 3, we have $\alpha(G) \ge \ceil{c(|A_1| - 1)} + \ceil{c(|A_2| - 1)} - 1 \ge \frac{a}{b}(N_1 + 2 + N_2 + 2) + \frac{b-k_1^2}{b} + \frac{b-k_2^2}{b} - 1 = \frac{a}{b}|G| + \frac{2b + a - k_1^2 - k_2^2}{b} - 1$.  
Hence $k_1^2 + k_2^2 \ge b + a + 1$.

Now to get a contradiction, it suffices to show that $k_i^j\le a$ for some
$i\in\{1,2\}$ and some $j\in\{1,2,3\}$; since $k_i^j\le b$ for all $i$ and $j$,
this will contradict one of the equalities above.  

\noindent\textbf{Subclaim 4b. }\textit{Either $k_2^1\le a$ or $k_2^2\le a$.  In
each case we get a contradiction, so the claim is true, and the lemma holds.}

By Subclaim 4a, we have $k_1^0+k_2^0 \ge 2b + 1 - 3a$.  
By symmetry, we assumed $k_2^0\ge k_1^0$, so we have $k_2^0 \ge \frac{2b
+ 1 - 3a}{2}$.  
Since, $k_2^2 \equiv k_2^0 + 2a \pmod{b}$ and $\frac{2b + 1 - 3a}{2} + 2a > b$,
we have $k_2^2 \le k_2^0 + 2a - b$.  Now we consider two cases, depending on
whether $k_2^0\le b-a$ or $k_2^0\ge b-a+1$.  If $k_2^0\le b-a$, then $k_2^2\le
k_2^0+2a-b \le (b-a)+2a-b = a$, a contradiction.  Suppose instead that
$k_2^0\ge b-a+1$.  Now $k_2^1 \equiv k_2^0+a\pmod{b}$.  Since $k_2^0 \ge b-a+1$,
we see that $k_2^0+a \ge b+1$, so $k_2^1 \le k_2^0+a-b\le a$, a contradiction.
\end{proof}

Now we turn to proving a series of lemmas showing that $G$ cannot have too many
$6^-$-vertices near each other.  Many of these lemmas will rely on applications of
the following result, which  
we think may be of independent interest. 
The idea for the proof is to find big independent sets for two smaller graphs,
and piece them together to get a big independent set in $G$.

For $S \subseteq V(G)$, let the \emph{interior} of $S$ be $\I(S) = \{x \in S
\mid N(x) \subseteq S\}$.  For vertex sets $V_1, V_2\subset V(G)$ we write
$V_1\adj V_2$ if there exists an edge $v_1v_2\in E(G)$ with $v_1\in
V_1$ and $v_2\in V_2$; otherwise, we write $V_i\nonadj V_j$.

\begin{lem}\label{uber-red}
Let $\G$ be a minor-closed family of graphs.  Let $G$ be a minimal
counterexample to the statement that every $n$-vertex graph in $\G$ has an
independent set of size at least $cn$ (for some fixed $c > 0$). Let $S_1,
\ldots, S_t$ be pairwise disjoint subsets of a nonempty set $S \subseteq V(G)$
such that $t < |S|$ and $G[S_i]$ is connected for all $i \in \set{1,\ldots,
t}$.  Now there exists $X \subseteq \set{1,...,t}$ such that $S_i\nonadj S_j$
for all distinct $i,j \in X$ and $\alpha\parens{G\brackets{\I(S) \cup
\bigcup_{i \in X} S_i}} < |X| + \ceil{c(|S| - t)}$.
\end{lem}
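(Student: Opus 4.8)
The plan is to mimic the structure of Lemma~\ref{alphaNoTriangles}: build a big independent set in $G$ by gluing together independent sets obtained, via minimality of $G$, from two smaller members of $\G$. The first smaller graph will be $G$ with a carefully chosen subset of the $S_i$'s each contracted to a single vertex; the second will be $G$ with the whole of $S$ deleted. Concretely, I would first pick $X \subseteq \{1,\dots,t\}$ maximal subject to the condition $S_i \nonadj S_j$ for all distinct $i,j \in X$ (an ``independent set'' among the $S_i$'s in the auxiliary graph whose vertices are $1,\dots,t$ and whose edges record adjacency between the corresponding $S_i$'s). Form $G'$ from $G$ by contracting each $S_i$ with $i \in X$ to a single vertex; since each $G[S_i]$ is connected, this is a legitimate minor, so $G' \in \G$ and $|V(G')| = |G| - \sum_{i\in X}(|S_i| - 1)$. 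Also form $G'' := G - S$, which lies in $\G$ as well.

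Next I would argue by contradiction: suppose the conclusion fails for the $X$ just chosen, i.e. $\alpha\parens{G[\I(S) \cup \bigcup_{i\in X} S_i]} \ge |X| + \ceil{c(|S|-t)}$. The key observation is that $\I(S) \cup \bigcup_{i\in X}S_i$, after the contractions, sits inside $G'$ and has no edges leaving it other than through vertices of $S \setminus (\I(S)\cup\bigcup_{i\in X}S_i)$ — in fact by definition of interior, vertices of $\I(S)$ have all neighbors inside $S$, and the contracted $S_i$'s are pairwise non-adjacent. So a large independent set here, together with a large independent set obtained by applying minimality to $G''$, can be combined. Apply minimality to $G''$ to get an independent set $I''$ with $|I''| \ge \ceil{c|V(G'')|} = \ceil{c(|G| - |S|)}$; note $I''$ lives entirely outside $S$, hence is disjoint from and non-adjacent to $\I(S)$ and to the interiors of the contracted blobs in the relevant sense. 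Then $I'$ (the assumed-large set inside the contracted region, viewed in $G'$ — uncontracting a chosen vertex of each $S_i$ back to a single vertex of $G$) together with $I''$ forms an independent set in $G$ of size at least $|X| + \ceil{c(|S|-t)} + \ceil{c(|G|-|S|)}$, and I would check that $|X| \ge |S| - t$ forces this to be at least $\ceil{c(|G|-|S|)} + \ceil{c(|S|-t)} + (|S|-t) \ge c|G|$, the last step using $\ceil{cx} + \ceil{cy} \ge \ceil{c(x+y)}$ together with $(|S|-t) + \ceil{c(|S|-t)} \ge \ceil{c(|S|-t)} + $ enough slack — more carefully, the integrality gain $|X| - c\sum_{i\in X}(|S_i|-1)$ combined with $t < |S|$ is what closes the gap. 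This contradicts minimality of $G$, since then $G$ is not a counterexample.

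The step I expect to be the main obstacle is the bookkeeping that establishes $|X| + \ceil{c(|S|-t)} + \ceil{c(|G|-|S|)} \ge c|G| = \alpha$-threshold — i.e. verifying that the chosen $X$ is large enough and that the ``$+|X|$'' genuinely compensates for having contracted $\sum_{i\in X}(|S_i|-1)$ vertices. The cleanest way is: maximality of $X$ gives that every $S_i$ with $i\notin X$ is adjacent to some $S_j$ with $j\in X$, which bounds things, but actually the slicker route is to not assume $X$ maximal at all — instead assume the lemma fails for every valid $X$ and derive a contradiction by choosing $X$ maximal and showing the displayed inequality is violated. I would also need to be careful that uncontracting the blobs in $I'$ really gives an independent set in $G$ (pick one representative vertex per contracted $S_i$ that lies in $I'$; its $G$-neighbors outside $S_i$ were edges of $G'$, hence excluded by independence of $I'$) and that $I' \cup I''$ has no edges between the two parts (edges from $\I(S)$ stay in $S$; edges from an uncontracted $S_i$, $i\in X$, to outside $S$ would again be edges present at the contracted vertex in $G'$, but $I''\subseteq V(G'')=V(G)\setminus S$ was built in $G''$ and is not constrained against those — so here I must instead only use vertices of $I'$ that lie in $\I(S)\cup\bigcup_{i\in X}S_i$ and note any such $S_i$-vertex adjacent to $V(G)\setminus S$ still corresponds, after contraction, to a $G'$-vertex not adjacent to anything we took in $I''$ since $I''$ and the contracted region are disjoint parts of... ) — this adjacency check is exactly the delicate point and deserves the most care; the resolution is that $\I(S)$ absorbs all the ``boundary'' issues by definition, and the contracted $S_i$'s, being pairwise non-adjacent and each contributing only a chosen representative, behave like isolated vertices relative to $I''$ because any $S_i$–$(V(G)\setminus S)$ edge would have been counted when applying minimality... so in fact one should apply minimality to $G'$ restricted appropriately rather than to all of $G''$ separately; I would set this up so that the two independent sets come from a single partition of $V(G')$ into $\I(S)\cup\bigcup_{i\in X}S_i$ and its complement, making disjointness and non-adjacency automatic.
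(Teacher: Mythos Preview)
Your plan has a genuine gap, and it is precisely the two points you flag as ``delicate'' but never resolve.

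First, the arithmetic does not close. You want $|X| + \lceil c(|S|-t)\rceil + \lceil c(|G|-|S|)\rceil \ge c|G|$, which needs $|X| \ge ct$. But a maximal $X$ among the $S_i$'s can have $|X|=1$ (e.g.\ if every pair $S_i,S_j$ is adjacent), while $ct$ may exceed $1$. Maximality of $X$ gives no useful lower bound on $|X|$, and your aside ``$|X|\ge |S|-t$'' is simply false in general.

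Second, the adjacency check fails for exactly the reason you noticed: an independent set $I''$ built in $G-S$ knows nothing about edges from $V(G)\setminus S$ into the $S_i$'s, so combining it with vertices of $\bigcup_{i\in X}S_i$ need not be independent in $G$.

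The paper's fix solves both problems with one move: contract \emph{every} $S_i$ (for all $i\in\{1,\dots,t\}$, not just $i\in X$) to a vertex $w_i$ \emph{and} delete the rest of $S$, producing a single graph $G'$ with $|G'|=|G|-|S|+t<|G|$. Minimality gives an independent set $I$ in $G'$ of size at least $c(|G|-|S|+t)$. Now let $X=\{i:w_i\in I\}$; since $I$ is independent in $G'$, automatically $S_i\nonadj S_j$ for distinct $i,j\in X$. Assuming the lemma fails for this $X$ gives an independent set $T$ in $G[\I(S)\cup\bigcup_{i\in X}S_i]$ of size at least $|X|+\lceil c(|S|-t)\rceil$. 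Then $(I\setminus\{w_i:i\in X\})\cup T$ is independent in $G$: any edge from $u\in I\setminus\{w_i\}\subseteq V(G)\setminus S$ to $v\in S_i$ with $i\in X$ would give $uw_i\in E(G')$, impossible since $u,w_i\in I$. The size is $|I|-|X|+|T|\ge c(|G|-|S|+t)+\lceil c(|S|-t)\rceil\ge c|G|$, and the $|X|$ terms cancel exactly---no lower bound on $|X|$ is needed.

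The missing idea is that $X$ must be \emph{produced by} the independent set in the contracted graph, not chosen in advance.
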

\begin{proof}
Suppose to the contrary that $\alpha\parens{G\brackets{\I(S) \cup \bigcup_{i
\in X} S_i}} \ge |X| + \ceil{c(|S| - t)}$ for all $X \subseteq \set{1,...,t}$ such
that $S_i\nonadj S_j$ for all distinct $i,j \in X$.  Create $G'$
from $G$ by contracting $S_i$ to a single vertex $w_i$ for each $i \in
\set{1,\ldots, t}$ and removing the rest of $S$.  (Note that we allow $t=0$.)
Since $t < |S|$, we have
$|G'| < |G|$ and hence minimality of $G$ gives an independent set $I$ in $G'$
with $|I| \ge c|G'| = c(|G| - |S| + t)$.  Let $W = I \cap \set{w_1, \ldots,
w_t}$.  By assumption, we have $\alpha\parens{G\brackets{\I(S) \cup
\bigcup_{w_i \in W} S_i}} \ge |W| + \ceil{c(|S| - t)}$.  If $T$ is a maximum
independent set in $G\brackets{\I(S) \cup \bigcup_{w_i \in W} S_i}$, then $(I
\setminus W) \cup T$ is an independent set in $G$ of size at least $|I| - |W| +
|T| \ge c(|G|-|S|+t)-|W|+(|W|+\ceil{c(|S|-t)})\ge c|G|$,
a contradiction.
\end{proof}

We will often apply Lemma~\ref{uber-red} with $S = J \cup N(J)$ for an
independent set $J$.  In this case, we always have $J \subseteq \I(S)$.
We state this case explicitly in Lemma~\ref{ind-red}

\begin{lem}
Let $\G$ be a minor-closed family of graphs.  Let $G$ be a minimal
counterexample to the statement that every $n$-vertex graph in $\G$ has an
independent set of size at least $cn$ (for some fixed $c > 0$).
No independent set $J$ of $G$ and nonnegative integer $k$ simultaneously
satisfy the following conditions.
\label{ind-red}
\begin{enumerate}
\item $|J| \ge c(|N(J)|+k)$.
\item
For at least $|J|-k$ vertices $x \in J$, there is an independent set $\{u_x,v_x\}$ of size 2
in $\displaystyle{N(x) \setminus \bigcup_{y \in J \setminus\set{x}}  N(y)}$.
\end{enumerate}
\end{lem}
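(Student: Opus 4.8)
The plan is to derive Lemma~\ref{ind-red} from Lemma~\ref{uber-red} by taking $S=J\cup N(J)$, as flagged in the paragraph before the statement. Suppose for contradiction that some independent set $J$ (which we may assume nonempty, so that $S\neq\emptyset$) and nonnegative integer $k$ satisfy (1) and (2). Since $J$ is independent, $J\cap N(J)=\emptyset$ and $N(x)\subseteq N(J)\subseteq S$ for every $x\in J$; hence $J\subseteq\I(S)$ and $\card S=\card J+\card{N(J)}$. Let $J'\subseteq J$ be the set of $x$ for which (2) supplies an independent $2$-set $\set{u_x,v_x}\subseteq N(x)\setminus\bigcup_{y\in J\setminus\set x}N(y)$; fix one such pair for each $x\in J'$, and note $\card{J'}\ge\card J-k$.

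For each $x\in J'$ put $S_x=\set{u_x,x,v_x}$, and apply Lemma~\ref{uber-red} to these $t=\card{J'}$ sets. First I would check its hypotheses: each $S_x\subseteq S$; each $G[S_x]$ is connected, being a path $u_x\adj x\adj v_x$ with $u_x\nonadj v_x$; and the $S_x$ are pairwise disjoint, because for distinct $x,x'\in J'$ the vertices $u_x,v_x$ avoid $N(x')$ while $u_{x'},v_{x'}\in N(x')$, and $x\notin N(x')$ (as $J$ is independent). Finally $t<\card S$: if $N(J)\neq\emptyset$ then $t\le\card J<\card J+\card{N(J)}=\card S$, and if $N(J)=\emptyset$ then $J'=\emptyset$, so $t=0<\card S$. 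Lemma~\ref{uber-red} then yields $X\subseteq J'$ with $S_x\nonadj S_{x'}$ for all distinct $x,x'\in X$ and
\[
\alpha\parens{G\brackets{\I(S)\cup\textstyle\bigcup_{x\in X}S_x}}\;<\;\card X+\ceil{c(\card S-t)}.
\]

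To contradict this I would exhibit a large independent set in that induced subgraph, namely $I:=(J\setminus X)\cup\bigcup_{x\in X}\set{u_x,v_x}$, obtained from $J$ by replacing each chosen hub $x$ with its two ``private'' neighbors. Then $I$ lies in the required subgraph, since $J\setminus X\subseteq J\subseteq\I(S)$ and $u_x,v_x\in S_x$, and $I$ is independent: $J\setminus X$ is independent; each $\set{u_x,v_x}$ is independent by choice; there is no edge between $\set{u_x,v_x}$ and $\set{u_{x'},v_{x'}}$ for distinct $x,x'\in X$ because $S_x\nonadj S_{x'}$; and no $u_x$ or $v_x$ with $x\in X$ is adjacent to any $y\in J\setminus X$, since $u_x,v_x\notin N(y)$ for $y\in J\setminus\set x$. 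As the $u_x,v_x$ ($x\in X$) are distinct vertices outside $J$, $\card I=\card J+\card X$. Finally, from $\card{J'}\ge\card J-k$ and (1) we get $\card S-t=\card J+\card{N(J)}-\card{J'}\le\card{N(J)}+k$, so $c(\card S-t)\le c(\card{N(J)}+k)\le\card J$, and since $\card J$ is an integer, $\card J\ge\ceil{c(\card S-t)}$. Therefore $\alpha\parens{G\brackets{\I(S)\cup\bigcup_{x\in X}S_x}}\ge\card I=\card J+\card X\ge\card X+\ceil{c(\card S-t)}$, contradicting the displayed inequality.

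I expect the only delicate point is choosing the pieces $S_x$ correctly: Lemma~\ref{uber-red} requires them connected, which forces us to bundle in the hub $x$ — harmless, because $x\in\I(S)$ already contributes to the induced subgraph — and then the whole argument rests on the swap $x\mapsto\set{u_x,v_x}$ preserving independence, which is exactly what the ``avoids $N(y)$ for all other $y\in J$'' clause in (2) is engineered to guarantee. Everything else is bookkeeping with the ceiling function and condition (1).
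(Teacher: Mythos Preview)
Your proof is correct and follows essentially the same approach as the paper: set $S=J\cup N(J)$, take the pieces $S_x=\set{x,u_x,v_x}$ in Lemma~\ref{uber-red}, and exhibit the swapped independent set $(J\setminus X)\cup\bigcup_{x\in X}\set{u_x,v_x}$ to obtain the contradiction. The only cosmetic difference is that you use $t=\card{J'}$ while the paper fixes $t=\card J-k$ exactly, and you are more careful in verifying the hypotheses of Lemma~\ref{uber-red} (disjointness of the $S_x$, connectedness, and $t<\card S$), which the paper leaves implicit.
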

\begin{proof}
Suppose the lemma is false.
Let $S = J \cup N(J)$ and $t = |J| - k$. Pick $x_1, \ldots, x_t \in J$ 
satisfying condition (2).  For $i \in \set{1,\ldots, t}$, let $S_i =
\set{x_i, u_{x_i}, v_{x_i}}$.  Applying Lemma~\ref{uber-red}, we get $X
\subseteq \set{1,\ldots,t}$ such that $S_i\nonadj S_j$ for all
distinct $i,j \in X$ and $\alpha\parens{G\brackets{J \cup \bigcup_{i \in X}
S_i}} < |X| + \ceil{c(|S| - t)}$.  By (2), we have $\alpha\parens{G\brackets{J
\cup \bigcup_{i \in X} S_i}} \ge |(J\setminus X)\cup\bigcup_{x\in
X}\set{u_x,v_x}| \ge (|J| - |X|) + 2|X|  = |X| + |J|$.  Hence $|X| +
\ceil{c(|S| - t)} > |X| + |J|$, giving $\ceil{c(|S| - t)} > |J| \ge
\ceil{c(|N(J)|+k)}$ by (1).  But $|S| - t = (|J| + |N(J)|) - (|J| - k) = |N(J)| +
k$; so $\ceil{c(|S|-t)} = \ceil{c(|N(J)|+k)}$, contradicting the previous
inequality.  This contradiction finishes the proof.
\end{proof}

As a simple example of how to apply Lemma~\ref{ind-red}, we note that it immediately
implies that
every planar graph $G$ has independence ratio at least $\frac15$.
By Euler's theorem, $G$ has a $5^-$-vertex $v$.  If $d(v)\le 4$, then let $G' =
G\setminus(v\cup N(v))$.  Let $I'$ be an independent set in $G'$ of size at
least $(n-5)/5$, and let $I = I'\cup\{v\}$.  If instead $d(v)=5$, then apply
Lemma~\ref{ind-red}, with $c=\frac15$, $J=\{v\}$, and $k=0$; since $K_6$ is
nonplanar, $v$ has some pair of nonadjacent neighbors.  This completes the
proof.

\begin{lem}\label{GeneralCrunchToOneVertexNeighborhoodSize}
Let $\G$ be a minor-closed family of graphs.  Let $G$ be a minimal
counterexample to the statement that every $n$-vertex graph in $\G$ has an
independent set of size at least $cn$ (for some fixed $c > 0$).  For any
non-maximal independent set $J$ in $G$, we have
\[\card{N(J)} \ge \floor{\frac{1-c}{c}|J|} + 2.\]
\end{lem}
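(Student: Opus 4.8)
The plan is to apply Lemma~\ref{ind-red} to a suitably chosen independent set. Let $J$ be the given non-maximal independent set. Since $J$ is not maximal, there is a vertex $z\notin J$ with $z$ nonadjacent to all of $J$; intuitively, this extra vertex will serve as the ``$2^{\text{nd}}$ private neighbor'' that condition~(2) of Lemma~\ref{ind-red} demands. First I would set $J'=J\cup\set{z}$, which is still independent, and then try to apply Lemma~\ref{ind-red} to $J'$ with $k=|J'|-1=|J|$, so that condition~(2) needs to be checked for only one vertex of $J'$, namely $z$ itself. For condition~(2) at $z$ we need an independent pair in $N(z)\setminus\bigcup_{y\in J}N(y)$; but $z$ may have few or no private neighbors, so this naive choice will not work directly.

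A cleaner route is to bound $N(J')$ directly. Observe that $N(J')\subseteq N(J)\cup\set{z}$ (every neighbor of $z$ lies in $N(J')$, but we are going the other way: a neighbor of some vertex of $J'$ is either a neighbor of $J$ or is $z$ — wait, $z$ is not a neighbor of $J$, and $z$ has neighbors, so actually $N(J')\subseteq N(J)\cup N(z)$, which is not obviously $N(J)\cup\set{z}$). So instead I would argue contrapositively: suppose $\card{N(J)}\le\floor{\frac{1-c}{c}|J|}+1$, i.e.\ $\card{N(J)}< \frac{1-c}{c}|J|+2$, equivalently $c(\card{N(J)}+2)< |J|+2c\cdot\frac{?}{}$ — the precise manipulation is that $\card{N(J)}\le \frac{1-c}{c}|J|+1$ gives $c\,\card{N(J)}\le (1-c)|J|+c$, hence $c(\card{N(J)}+|J|+1)\le |J|+c\le |J|+1$. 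Then I would apply Lemma~\ref{ind-red} to $J'=J\cup\set{z}$ with $k=|J|$: condition~(1) asks $|J'|\ge c(\card{N(J')}+k)$, i.e.\ $|J|+1\ge c(\card{N(J')}+|J|)$; since $N(J')\subseteq N(J)\cup N(z)$ and in a triangulation-free-of-separating-triangles setting $N(z)\subseteq N(J)\cup(\text{at most}\ d(z)\ \text{vertices})$ this needs care, so the right move is to note $N(J')=N(J\cup\set z)$ and that every vertex of $N(J')\setminus N(J)$ is a neighbor of $z$ not adjacent to $J$; crucially, condition~(2) only needs to hold for $|J'|-k=1$ vertex.

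Here is the fix that makes it go through. Apply Lemma~\ref{ind-red} with the independent set $J$ itself (not $J'$) and with $k=|J|$, the point being that with $k=|J|$ condition~(2) is vacuous — it requires an independent private pair for at least $|J|-k=0$ vertices. Then condition~(1) becomes $|J|\ge c(\card{N(J)}+|J|)$, i.e.\ $\card{N(J)}\le\frac{1-c}{c}|J|$; but this is weaker than what we want. To get the ``$+2$'' I would instead use the non-maximality: apply Lemma~\ref{ind-red} to $J$ with $k=|J|-1$, so condition~(2) must hold for one vertex $x\in J$. Choose $x$ to be a vertex of $J$ adjacent to some vertex $w$ that is nonadjacent to $J\setminus\set x$ — such $w$ exists because $J$ non-maximal gives $z\nonadj J$, and then $z$ together with... this still needs an independent \emph{pair}. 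So the honest plan: take $z\nonadj J$; then $J\setminus\set x$ for any $x$ together with $z$ shows that condition~(2)'s private set for $x$ at least contains $z$; we need one more private neighbor, which $x$ has unless $N(x)\subseteq\set z\cup\bigcup_{y\in J\setminus x}N(y)$, a degenerate case handled separately (it forces $x$ to have an explicit small neighborhood, contradicting minimal-counterexample structure via $K_{\,}$-freeness). Modulo that degenerate case, Lemma~\ref{ind-red} with $k=|J|-1$ yields $|J|\ge c(\card{N(J)}+|J|-1)$, hence $\card{N(J)}\ge\frac{1-c}{c}|J|+1$, and integrality of $\card{N(J)}$ upgrades this to $\floor{\frac{1-c}{c}|J|}+2$ unless $\frac{1-c}{c}|J|$ is an integer, in which case a separate application of Claim~1-style contraction (as in Lemma~\ref{alphaNoTriangles}) recovers the extra $1$. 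The main obstacle I expect is exactly this integrality/degenerate-case bookkeeping: ensuring that when $\frac{1-c}{c}|J|\in\mathbb Z$ we still gain the full $+2$, and that the vertex $z$ can always be leveraged to produce a genuine independent \emph{pair} of private neighbors rather than a single private vertex.
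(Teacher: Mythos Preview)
Your proposal has a genuine gap at the crucial step. The vertex $z$ witnessing non-maximality satisfies $z\notin J$ and $z\nonadj J$; in particular $z\notin N(x)$ for every $x\in J$. Condition~(2) of Lemma~\ref{ind-red} asks for an independent pair \emph{inside} $N(x)\setminus\bigcup_{y\in J\setminus\set{x}}N(y)$, so $z$ cannot serve as one of those two private neighbours. Your ``degenerate case'' is therefore not an edge case but the entire problem: you have no mechanism at all to produce even one private neighbour, let alone an independent pair, for any $x\in J$. The attempt to route through $J'=J\cup\set{z}$ runs into the same wall, since $N(J')$ now picks up all of $N(z)$, which you have no control over, so condition~(1) need not hold.

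The paper uses non-maximality in a completely different way. It works directly with Lemma~\ref{uber-red} (not its corollary~\ref{ind-red}), taking $S=J\cup N(J)$, first with $t=1$, $S_1=S$, and then with $t=0$. The role of non-maximality is simply to guarantee $S\ne V(G)$, so that minimality of $G$ applied to the induced subgraph $G[S]$ yields $\alpha(G[S])\ge\ceil{c|S|}$. Combining this with the inequality $\alpha(G[S])<1+\ceil{c(|S|-1)}$ coming from the $t=1$ application forces $\ceil{c|S|}=\ceil{c(|S|-1)}\le|J|$, while the $t=0$ application gives $|J|<\ceil{c|S|}$, a contradiction. There is also a preliminary reduction to the case where $G[J\cup N(J)]$ is connected, handled by induction on $|J|$. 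None of this requires finding private neighbours for vertices of $J$.
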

\begin{proof}
Assume the lemma is false and choose a counterexample $J$ minimizing $|J|$. Suppose $G[J \cup
N(J)]$ is not connected. Now we choose a partition $\set{J_1, \ldots,
J_k}$ of $J$, minimizing $k$, such that $k \ge 2$ and $G[J_i \cup N(J_i)]$ is
connected for each $i \in \set{1,\ldots,k}$.  Applying the minimality of $|J|$ to each $J_i$
we conclude that $|N(J_i)| \ge \floor{\frac{1-c}{c}|J_i|} + 2$ for each $i \in
\set{1,\ldots,k}$.  The minimality of $k$ gives 
$|N(J)|= \card{\bigcup_{i=1}^k N(J_i)} = \sum_{i=1}^k|N(J_i)|$, 
so $|N(J)| \ge 2k +
\sum_{i=1}^k
\floor{\frac{1-c}{c}|J_i|} \ge k + \sum_{i=1}^k \frac{1-c}{c}|J_i| \ge
2+\frac{1-c}{c}|J|$, a contradiction.  Hence, $G[J \cup N(J)]$ is connected.

Let $S = J \cup N(J)$. Apply Lemma~\ref{uber-red} with $t=1$ and $S_1 = S$.
This shows that either $|J| \le \alpha(G[\I(S)]) < \ceil{c(|S| - 1)}$ or
$\alpha(G[S]) < 1 + \ceil{c(|S| - 1)}$, since the only possibilities are
$X=\emptyset$ and $X=\set{1}$.
By assumption $J$ is a counterexample, so $\card{N(J)} \le
\floor{\frac{1-c}{c}|J|} + 1$, which implies that $|S| = |J| + |N(J)| \le |J| +
\floor{\frac{1-c}{c}|J|} + 1 = \floor{\frac{|J|}{c}} + 1$.  
Now $\ceil{c(|S|-1)} \le \ceil{c((\floor{\frac{|J|}{c}}+1)-1)} =
\ceil{c\floor{\frac{|J|}c}}\le \ceil{|J|}=|J|$.  Hence, we cannot have
$X=\emptyset$ in Lemma~\ref{uber-red}.  

Instead, we must have $X=\set{1}$,
which implies that $\alpha(G[S]) < 1 + \ceil{c(|S| - 1)}$. Since $J$ is
non-maximal, we have $S \ne V(G)$, so we may apply minimality of $G$ to
$G[S]$ to conclude that $\alpha(G[S]) \ge \ceil{c|S|}$.  Combining this
inequality with the previous one, we have $\ceil{c|S|} = \ceil{c(|S| - 1)}$.   
Now the upper bound on $\ceil{c(|S| - 1)}$ from the previous paragraph gives
$\ceil{c|S|} = \ceil{c(|S| - 1)}  
\le |J|$. 
Finally, applying Lemma~\ref{uber-red} with $t=0$ (simply deleting $J\cup
N(J)$) shows that $|J| < \ceil{c(|S|)}$.  These two final inequalities
contradict each other, which
finishes the proof.
\end{proof}

Lemmas~\ref{alphaNoTriangles}--\ref{GeneralCrunchToOneVertexNeighborhoodSize}
hold in a more general setting than just $c=\frac3{13}$, as we showed.  In the
rest of this section, we consider only a planar graph $G$ that is minimal among
those with independence ratio less than $\frac3{13}$.  To remind the reader of
this, we often call it a \emph{minimal} $G$.
Applying Lemma~\ref{GeneralCrunchToOneVertexNeighborhoodSize}
with $c = \frac{3}{13}$ gives the following corollary.

\begin{lem}\label{CrunchToOneVertexNeighborhoodSize}
For any non-maximal independent set $J$ in a minimal $G$, we have
\[\card{N(J)} \ge \floor{\frac{10}{3}|J|} + 2.\]
In particular, if $|J|=1$, then $|N(J)|\ge 5$; if $|J|=2$, then $|N(J)|\ge 8$;
and if $|J|=3$, then $|N(J)|\ge 12$.
\end{lem}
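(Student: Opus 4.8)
The plan is to apply Lemma~\ref{GeneralCrunchToOneVertexNeighborhoodSize} directly with $c=\frac{3}{13}$, noting that planar graphs form a minor-closed family, and that a minimal $G$ with independence ratio less than $\frac3{13}$ is precisely a minimal counterexample to the statement that every $n$-vertex planar graph has an independent set of size at least $\frac3{13}n$. The only computation is to evaluate $\frac{1-c}{c}$ when $c=\frac3{13}$: we get $\frac{1-3/13}{3/13}=\frac{10/13}{3/13}=\frac{10}{3}$, so the general bound $\card{N(J)}\ge\floor{\frac{1-c}{c}\card{J}}+2$ becomes exactly $\card{N(J)}\ge\floor{\frac{10}{3}\card{J}}+2$, as claimed.

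For the ``in particular'' clause I would simply substitute $\card{J}=1,2,3$ into $\floor{\frac{10}{3}\card{J}}+2$. For $\card{J}=1$: $\floor{10/3}+2=3+2=5$. For $\card{J}=2$: $\floor{20/3}+2=6+2=8$. For $\card{J}=3$: $\floor{30/3}+2=10+2=12$. Each of these requires that the independent set $J$ in question be non-maximal, which is the hypothesis we carry over verbatim from Lemma~\ref{GeneralCrunchToOneVertexNeighborhoodSize}.

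There is essentially no obstacle here; the statement is a specialization of an already-proven general lemma, so the ``proof'' is just the arithmetic above together with the observation that the hypotheses transfer. The only point worth a sentence of care is confirming that ``minimal $G$'' in the sense defined just before the corollary (a planar graph minimal among those with independence ratio less than $\frac3{13}$) matches the hypothesis ``minimal counterexample to the statement that every $n$-vertex graph in $\G$ has an independent set of size at least $cn$'' of Lemma~\ref{GeneralCrunchToOneVertexNeighborhoodSize} with $\G$ the class of planar graphs and $c=\frac3{13}$; this is immediate from the definitions, since having an independent set of size at least $\frac3{13}n$ is the same as having independence ratio at least $\frac3{13}$.
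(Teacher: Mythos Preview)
Your proposal is correct and matches the paper's approach exactly: the paper simply states that applying Lemma~\ref{GeneralCrunchToOneVertexNeighborhoodSize} with $c=\frac{3}{13}$ gives this corollary, and your write-up just fills in the arithmetic $\frac{1-c}{c}=\frac{10}{3}$ and the substitutions for $|J|=1,2,3$.
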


The case $|J|=1$ shows that $G$ has minimum degree
$5$, and this is the best we
can hope for when $|J|=1$.  
Recall that $G$ is a planar triangulation, since we chose it to have as few
non-triangular faces as possible.
As a result, we
can improve the bound when $|J|=2$ to $\card{N(J)} \ge 9$.  Similarly, in many
cases we can improve the bound when $|J|=3$ to $\card{N(J)} \ge 13$.  These 
improvements are the focus of the next ten lemmas.  In many instances, the proofs
are easy applications of Lemma~\ref{uber-red}. First, we need a few basic facts
about planar graphs.

\begin{lem}
\label{BasicPlanarFacts}
If $G$ is a plane triangulation with no separating 3-cycle and $\delta(G) = 5$, then
\begin{enumerate}
\item[(a)] If $v \in V(G)$, then $G[N(v)]$ is a cycle; and
\item[(b)] $G$ is $4$-connected with $|V(G)| \ge 12$; and
\item[(c)] If $v,w \in V(G)$ are distinct, then $G[N(v) \cap N(w)]$ is the
disjoint union of copies of $K_1$ and $K_2$.
\end{enumerate}
\end{lem}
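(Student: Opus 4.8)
The three parts are standard facts about planar triangulations, so the plan is to prove each by a short direct argument using planarity and the hypotheses $\delta(G)=5$ and no separating $3$-cycle.

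For part (a), the key point is that in a plane triangulation every face is a triangle, so walking around the vertex $v$ along consecutive faces shows that $N(v)$ carries a closed walk in which consecutive neighbors are adjacent; that is, $G[N(v)]$ contains a spanning cycle $C_v$ on $d(v)$ vertices. I would then argue that $G[N(v)]$ has no \emph{chord}: a chord $xy$ together with $v$ forms a $3$-cycle, and since $d(v)\ge 5$ both arcs of $C_v$ between $x$ and $y$ contain interior vertices, which lie in different components of $G\setminus\{v,x,y\}$ (they are separated inside the disk bounded by $vxy$ versus outside); this makes $vxy$ a separating $3$-cycle, contradicting the hypothesis. Hence $G[N(v)]$ is exactly the cycle $C_v$.

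For part (b), planarity plus $\delta(G)=5$ and the standard edge bound $|E(G)|\le 3|V(G)|-6$ for planar graphs forces $|V(G)|\ge 12$ (summing degrees: $5|V(G)|\le 2|E(G)|\le 6|V(G)|-12$). For $4$-connectivity, suppose $\{x,y,z\}$ is a cut set of size $3$ (a triangulation with $\delta\ge 5\ge 4$ is at least $3$-connected, so the minimum cut size is $3$ if it is not $4$-connected). In a plane triangulation any minimal vertex cut of size $3$ is the vertex set of a $3$-cycle (its removal disconnects the plane, and a shortest such separating closed curve through three vertices is a face-path triangle); that triangle is then separating, contradicting the hypothesis. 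So $G$ is $4$-connected. I would phrase this carefully, perhaps citing $G[N(v)]$ being a cycle from part (a) to see that no $3^-$-cut exists at all, and that any $3$-cut would be a triangle.

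For part (c), fix distinct $v,w$ and let $A = N(v)\cap N(w)$. Every vertex $x\in A$ lies on the cycle $G[N(v)]$ (by part (a)), and similarly on $G[N(w)]$. First, $|A|\le 2$: if $x,y,z\in A$ were distinct, then $\{v,w\}$ is an independent set (if $v\adj w$ then $vwx$ is a $3$-cycle, separating if $d(v),d(w)\ge 5$ — I would handle the $v\adj w$ case by the same separating-triangle argument as in (a), since the other common neighbors and the remaining neighbors of $v$ land on opposite sides) of size $2$ with $x,y,z\in N(\{v,w\})$; more simply, $\{v,w\}$ together with three common neighbors would give a $K_{2,3}$, and combined with planarity and the triangulation structure one gets a separating triangle or violates Lemma~\ref{CrunchToOneVertexNeighborhoodSize} style counting — I would instead just note directly that three common neighbors of $v,w$ on the cycle $G[N(v)]$ would force a chord or a separating $3$-cycle. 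Given $|A|\le 2$, it only remains to observe $G[A]$ is either empty, $K_1$, or $K_2$, all of which are disjoint unions of copies of $K_1$ and $K_2$; the point worth stating is just that when $|A|=2$ the two vertices may or may not be adjacent, and either way the claim holds.

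The main obstacle is being careful in part (c) (and the $v\adj w$ subcase throughout) to rule out \emph{three} common neighbors cleanly: the cleanest route is the chord/separating-triangle dichotomy from part (a) applied to the cycle $G[N(v)]$, noting that two common neighbors $x,y$ of $v$ and $w$ that are non-consecutive on $G[N(v)]$ would make $w$ adjacent to two non-consecutive vertices of that cycle, and a third common neighbor then forces either a chord of $G[N(v)]$ or a vertex of small degree — I would pin this down by a short planarity argument rather than heavy counting. Everything else is routine.
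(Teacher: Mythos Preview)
Parts (a) and (b) are essentially correct and close to the paper's approach. For (b) the paper avoids your assertion that ``every minimal $3$-cut in a triangulation is a triangle'' by a more direct argument: if $\{x,y,z\}$ is a $3$-cut with $xy\notin E(G)$ (this may be assumed, since otherwise $xyz$ would itself be a separating $3$-cycle), then the cycle $G[N(x)]$ from part (a) meets each component of $G\setminus\{x,y,z\}$ yet contains neither $x$ nor $y$, so removing the single vertex $z$ disconnects it---impossible for a cycle.

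Part (c), however, contains a genuine error: you claim $|N(v)\cap N(w)|\le 2$, and this is false. The conclusion ``disjoint union of copies of $K_1$ and $K_2$'' means each \emph{component} of $G[N(v)\cap N(w)]$ has at most two vertices (equivalently, there is no induced $P_3$); it does not bound the total size. The paper itself uses instances with $|N(v)\cap N(w)|\ge 3$: in the proof of Lemma~\ref{BigAlphaWithSevenPlus} one has $|N(v)\cap N(w_i)|\ge 3$ and invokes part (c) only to deduce that the intersection has at least two components. Your sketches toward $|A|\le 2$ via $K_{2,3}$ or ``forcing a chord or a separating $3$-cycle'' cannot succeed, since $K_{2,3}$ is planar and three common neighbors of $v$ and $w$ need not produce any chord of $G[N(v)]$.

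The paper's argument for (c) is short and worth knowing: suppose $abc$ is an induced $P_3$ in $G[N(v)\cap N(w)]$, and look at $G[N(b)]$. It contains $v,w,a,c$ with $v\adj a$, $v\adj c$, $w\adj a$, $w\adj c$, so $G[N(b)]$ contains either a $K_3$ (on $\{v,w,a\}$, if $v\adj w$) or an induced $C_4$ (on $\{v,a,w,c\}$, if $v\nonadj w$ and using $a\nonadj c$). Either contradicts part (a), since $G[N(b)]$ is a cycle of length $d(b)\ge 5$.
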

\begin{proof}
Plane triangulations are well-known to be $3$-connected.  Property (a) follows
by noting that $G\setminus\set{v}$ is $2$-connected and hence each face boundary is a cycle;
so $G[N(v)]$ has a hamiltonian cycle.  This cycle must be induced since $G$ has
no separating 3-cycle.

For (b), suppose that $G$ has a separating set $\set{x,y,z}$.  Since $G$ has no
separating 3-cycle, we assume that $xy \not \in E(G)$.  By (a), $N(x)$ induces a cycle $C$.
Since $G$ is $3$-connected, $x$ must have a neighbor in each component of $G\setminus\set{x,y,z}$.  
So $C$ has a vertex in each component of $G\setminus\set{x,y,z}$ and hence $C\setminus\set{x,y,z}$ is disconnected.
But $x \not \in V(C)$ and since $xy \not \in E(G)$, also $y \not \in V(C)$.  So, $C\setminus\set{z}$ is disconnected, which is impossible.
Since $G$ is a plane triangulation and 
$\delta(G)=5$, we have $5|G| \le 2|E(G)| = 6|G| - 12$, so $|G| \ge 12$.

By (a) and $\delta(G) = 5$, it follows that no neighborhood contains $K_3$ or
$C_4$. If $G[N(v) \cap N(w)]$ had an induced $P_3$ (path on 3 vertices), then
the neighborhood of the center of this $P_3$ would contain $K_3$ or $C_4$. 
This proves (c).
\end{proof}

\begin{lem}\label{JIsTwo}
Every independent set $J$ in a minimal $G$ with $|J| = 2$, satisfies $\card{N(J)} \ge 9$.
\end{lem}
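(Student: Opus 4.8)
The plan is to assume the lemma fails and derive a contradiction from Lemma~\ref{ind-red}. By Lemma~\ref{CrunchToOneVertexNeighborhoodSize} we already know $\card{N(J)} \ge 8$, so it suffices to rule out $\card{N(J)} = 8$. Write $J = \set{x,y}$ and set $A = N(x)\cap N(y)$, $B_x = N(x)\setminus N(y)$, $B_y = N(y)\setminus N(x)$. Since $J$ is independent, $\card{A} = d(x)+d(y)-8$, $\card{B_x} = 8-d(y)$, and $\card{B_y} = 8-d(x)$; also $N(x)\setminus \bigcup_{z\in J\setminus\set{x}} N(z) = B_x$, and likewise for $y$. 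The goal is to apply Lemma~\ref{ind-red} with this $J$, with $k=0$, and with $c=\frac{3}{13}$: its condition (1) reads $2 \ge \frac{3}{13}\cdot 8$, which holds, so the whole argument reduces to establishing condition (2), namely that each of $B_x$ and $B_y$ contains a pair of non-adjacent vertices.

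To analyze $B_x$ and $B_y$ I would invoke Lemma~\ref{BasicPlanarFacts}, whose hypotheses hold because a minimal $G$ is a plane triangulation with no separating $3$-cycle (Lemma~\ref{alphaNoTriangles}) and with $\delta(G)=5$ (the $\card{J}=1$ case of Lemma~\ref{CrunchToOneVertexNeighborhoodSize}, together with planarity). Thus $C_x := G[N(x)]$ and $C_y := G[N(y)]$ are induced cycles of lengths $d(x),d(y)\ge 5$, and $G[A]$ is a disjoint union of copies of $K_1$ and $K_2$. The single observation that drives everything is: three consecutive vertices of $C_x$ cannot all lie in $A$, since (as $C_x$ is an induced cycle of length at least $5$) they would induce a $P_3$ in $G[A]$; and the same holds for $C_y$. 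From this, $\card{B_x}\le 1$ is impossible, since it would force $A$ to contain all but at most one vertex of the cycle $C_x$, hence three consecutive ones; so $d(y)\le 6$ and $\card{B_x}\in\set{2,3}$, and symmetrically $\card{B_y}\in\set{2,3}$. Now if $\card{B_x}=3$, then since $C_x$ is triangle-free those three vertices include a non-adjacent pair; and if $B_x=\set{a,b}$ has size $2$, then an edge $ab$ would—$C_x$ being induced—make $a$ and $b$ consecutive on $C_x$, so $A = V(C_x)\setminus\set{a,b}$ would be a path on $d(x)-2\ge 3$ consecutive vertices of $C_x$, again containing a $P_3$ and contradicting the structure of $G[A]$; hence $ab\notin E(G)$. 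Either way $B_x$ contains a non-adjacent pair, and by symmetry so does $B_y$. This is precisely condition (2) of Lemma~\ref{ind-red}, which therefore yields the contradiction, so $\card{N(J)}\ge 9$.

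The only part needing care is arranging for the ``no three consecutive vertices of $A$ on a neighborhood cycle'' fact to do all of the work at once: it must simultaneously bound $d(x),d(y)\le 6$, guarantee $\card{B_x},\card{B_y}\ge 2$, and supply the required non-adjacent pairs. Once that is set up, the remainder is routine bookkeeping—the arithmetic in condition (1) of Lemma~\ref{ind-red}, and the check that the ``private'' neighborhood appearing in that lemma really is $N(x)\setminus N(y)$ because $J$ is independent. I do not expect to need a separate case analysis on the ordered pair $(d(x),d(y))$, nor any appeal to Lemma~\ref{uber-red} beyond what is already packaged inside Lemma~\ref{ind-red}.
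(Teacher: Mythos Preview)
Your proof is correct and follows essentially the same route as the paper's: both reduce to verifying condition~(2) of Lemma~\ref{ind-red} with $k=0$, using Lemma~\ref{BasicPlanarFacts}(a,c) to show that $N(x)\setminus N(y)$ cannot be a clique because that would force a $P_3$ inside $G[N(x)\cap N(y)]$. One small point: your invocation of Lemma~\ref{CrunchToOneVertexNeighborhoodSize} presupposes that $J$ is non-maximal, which the paper secures via $|G|\ge 12$ from Lemma~\ref{BasicPlanarFacts}(b); however, your structural argument for condition~(2) already works for every $|N(J)|\le 8$, so this reduction step is inessential.
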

\begin{proof}
By Lemma~\ref{BasicPlanarFacts}(b), $|G|\ge 12$; so $J$ cannot be a maximal
independent set when $\card{N(J)} \le 7$.  Hence, by Lemma
\ref{CrunchToOneVertexNeighborhoodSize}, we may assume $\card{N(J)} = 8$. 
Let $J = \set{x,y}$.  If we can apply Lemma~\ref{ind-red} with $k=0$, then we
are done.  If we cannot, then by symmetry we may assume that there is no
independent $2$-set in $N(x) \setminus N(y)$.  So $N(x)\setminus N(y)$ is a
clique.  Since $d(x)\ge 5$ and $N(x)$ induces a cycle, $|N(x)\setminus N(y)|\le
2$.  Now, since $x$ is a $5^+$-vertex, $G[N(x) \cap N(y)]$ induces $P_3$; this
contradicts Lemma~\ref{BasicPlanarFacts}(c).
\end{proof}

A direct consequence of Lemma~\ref{JIsTwo} is the following useful fact.
\begin{lem}
A minimal $G$ cannot have two nonadjacent 5-vertices with at least two
common neighbors.  In particular, each vertex $v$ in $G$ has $\frac{d(v)}{2}$
or more $6^+$-neighbors.
\label{alpha55}
\end{lem}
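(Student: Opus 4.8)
The statement is essentially a corollary of Lemma~\ref{JIsTwo}. For the first part, suppose $u$ and $w$ are nonadjacent $5$-vertices in a minimal $G$ sharing at least two common neighbors, and set $J=\set{u,w}$. Then $J$ is an independent set of size $2$, and by inclusion--exclusion $\card{N(J)}=\card{N(u)\cup N(w)}=d(u)+d(w)-\card{N(u)\cap N(w)}\le 5+5-2=8$, contradicting Lemma~\ref{JIsTwo}. Hence no such pair of $5$-vertices exists.

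For the ``in particular'' clause, I would argue by contradiction: suppose some vertex $v$ has fewer than $\frac{d(v)}2$ neighbors of degree at least $6$. Since $\delta(G)=5$ (by the $|J|=1$ case of Lemma~\ref{CrunchToOneVertexNeighborhoodSize}), every neighbor of $v$ of degree at most $5$ is in fact a $5$-vertex, so $v$ has a set $S$ of more than $\frac{d(v)}2$ $5$-neighbors. By Lemma~\ref{BasicPlanarFacts}(a) --- using that $G$ is a triangulation with no separating $3$-cycle (Lemma~\ref{alphaNoTriangles}) and $\delta(G)=5$ --- the subgraph $G[N(v)]$ is an induced cycle $u_1u_2\cdots u_{d(v)}$ of length at least $5$. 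The key combinatorial observation is that any set of more than $\frac{d(v)}2$ vertices of a cycle $C_{d(v)}$ contains two vertices at cyclic distance exactly $2$: the pairs at cyclic distance $2$ form a disjoint union of cycles on $V(C_{d(v)})$ (a single cycle if $d(v)$ is odd, two cycles otherwise), and any set avoiding every such pair is an independent set in that union, hence has at most $\floor{d(v)/2}$ vertices. Applying this to $S$, there are $5$-vertices $u=u_{i-1}$ and $w=u_{i+1}$ in $S$; since $N(v)$ is induced and has length at least $5$, we get $u\nonadj w$, while $\set{v,u_i}\subseteq N(u)\cap N(w)$. Thus $u$ and $w$ are nonadjacent $5$-vertices with at least two common neighbors, contradicting the first part.

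The inclusion--exclusion count and the fact that $G[N(v)]$ is an induced cycle are routine. The only mildly delicate step is the statement about cycles --- that more than half of the vertices of $C_n$ cannot avoid all pairs at cyclic distance $2$ --- but this is a short independent-set computation on the square of the cycle, so I do not expect any real obstacle; the lemma falls out quickly once Lemma~\ref{JIsTwo} is in hand.
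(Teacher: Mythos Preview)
Your proof is correct and follows essentially the same route as the paper: deduce the first statement from Lemma~\ref{JIsTwo} via the count $|N(\{u,w\})|\le 5+5-2=8$, and for the second statement use the cyclic order of $N(v)$ together with pigeonhole to locate two $5$-neighbors $u_{i-1},u_{i+1}$ at cyclic distance~$2$, which are nonadjacent (no separating $3$-cycle) and share the two common neighbors $v,u_i$. The paper's write-up is terser---it just says ``by Pigeonhole''---but your independent-set formulation in the distance-$2$ graph is a fine way to spell out the same step.
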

\begin{proof}
The first statement follows immediately from Lemma~\ref{JIsTwo}.  Now we
consider the second.  Let $v$ be a vertex with $d(v)=k$ and neighbors
$u_1,\ldots,u_k$ in clockwise order.  If more than $k/2$ neighbors of $v$ are
5-vertices, then (by Pigeonhole) there exists an integer $i$ such that $u_i$ and
$u_{i+2}$ are 5-vertices (subscripts are modulo $k$).  Now we apply
Lemma~\ref{JIsTwo} to $u_i$ and $u_{i+2}$.  Recall that $u_i$ and $u_{i+2}$ are
nonadjacent, since $G$ has no separating 3-cycle, as shown in
Lemma~\ref{alphaNoTriangles}.
\end{proof}

Now we consider the case when $|J| = 3$.  Lemma
\ref{CrunchToOneVertexNeighborhoodSize} gives $|N(J)| \ge 12$.  Our next few
lemmas show certain conditions under which we can conclude that $|N(J)|\ge 13$.

\begin{lem}\label{ThreeHelper}
Let $J$ be an independent set in a minimal $G$ with $|J| = 3$ and $|N(J)| \ge
12$.  Choose $S_1, S_2 \subseteq J \cup N(J)$ such that $S_1 \cap S_2 =
\emptyset$ and both $G[S_1]$ and $G[S_2]$ are connected. If $\alpha(G[S_i \cup
J]) \ge 4$ for each $i \in \set{1,2}$, then $|N(J)|\ge 13$.
\end{lem}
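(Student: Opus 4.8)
The plan is to apply Lemma~\ref{uber-red} with the set $S = J \cup N(J)$ and the three connected pieces $S_1$, $S_2$, and one more piece that picks up a single vertex of $J$ not yet covered. More precisely, $|J|=3$ means $J = \{x_1,x_2,x_3\}$, and since $S_1 \cap S_2 = \emptyset$ these two sets can contain at most two of the three vertices of $J$; say $x_1 \notin S_1 \cup S_2$ (if some $x_i$ lies outside both, relabel; if all of $J$ is inside $S_1\cup S_2$, we instead take just two pieces). Set $S_3 = \{x_1\}$, which is trivially connected and disjoint from $S_1$ and $S_2$. Then $t = 3 < |S| = |J|+|N(J)| = 3 + |N(J)|$, so the hypotheses of Lemma~\ref{uber-red} hold, and we obtain $X \subseteq \{1,2,3\}$ with $S_i \nonadj S_j$ for distinct $i,j\in X$ and
\[
\alpha\parens{G\brackets{\I(S) \cup \textstyle\bigcup_{i\in X} S_i}} < |X| + \ceil{c(|S|-3)} = |X| + \ceil{c|N(J)|}.
\]

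The key point is that $J \subseteq \I(S)$ (every vertex of $J$ has all its neighbors in $N(J)\subseteq S$), so in particular each $S_i\cup J$ with $i\in X$ is contained in $\I(S)\cup\bigcup_{i\in X}S_i$. First I would argue that $X$ must be nonempty: if $X=\emptyset$ the inequality reads $|J| = 3 \le \alpha(G[\I(S)]) < \ceil{c|N(J)|}$; with $c=\frac3{13}$ and $|N(J)|\ge 12$ this is already $3 < \ceil{\frac{36}{13}} = 3$, a contradiction (and for larger $|N(J)|$ one uses instead that $J$ is non-maximal, hence $G[S]\ne G$, to get $\alpha(G[S])\ge\ceil{c|S|}$ and push through — but actually the cleanest route is to note we only need $|N(J)|\ge 13$ in the conclusion, so if $|N(J)|\ge 13$ we are already done, and we may assume $|N(J)|=12$). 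So I would simply assume $|N(J)| = 12$ throughout, making $\ceil{c|N(J)|} = \ceil{\frac{36}{13}} = 3$.

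With $|N(J)|=12$ the inequality becomes $\alpha(G[\I(S)\cup\bigcup_{i\in X}S_i]) < |X| + 3$. Now I would use the hypothesis $\alpha(G[S_i\cup J])\ge 4$ for $i\in\{1,2\}$ to derive a contradiction by case analysis on which of $1,2$ lie in $X$. If $1\in X$ (the case $2\in X$ is symmetric), then $S_1\cup J \subseteq \I(S)\cup\bigcup_{i\in X}S_i$ gives $\alpha \ge 4$, so we need $4 < |X|+3$, i.e. $|X|\ge 2$; combined with the independence condition $S_i\nonadj S_j$, having both indices from $\{1,2\}$ in $X$ forces $S_1\nonadj S_2$, and then $\alpha(G[S_1\cup S_2\cup J]) \ge \alpha(G[S_1\cup J]) + \alpha(G[S_2\cup J]) - |J| \ge 4+4-3 = 5$ — wait, this overcounts $J$; the honest bound combines a max independent set of $G[S_1\cup J]$ of size $4$ with the part of a max independent set of $G[S_2\cup J]$ lying in $S_2$, which has size at least $4-|J\cap(\text{that set})|$... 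The hard part will be bookkeeping this intersection cleanly: one shows that since $\alpha(G[S_i\cup J])\ge 4 > 3 = |J|$, each $S_i$ contributes at least one vertex outside $J$ to its optimal independent set, and because $S_1\nonadj S_2$ these contributions are compatible with a common independent subset of $J$, yielding $\alpha(G[\I(S)\cup S_1\cup S_2]) \ge |J| + (\text{contribution from }S_1) + (\text{contribution from }S_2)$, which I expect to be at least $3 + 1 + 1 = 5 = |X|+3$, contradicting the strict inequality. The remaining case, $X\cap\{1,2\}=\emptyset$, forces $X\subseteq\{3\}$, so $\alpha(G[\I(S)\cup\bigcup_{i\in X}S_i]) \ge |J| = 3$ while the bound demands it be $< |X|+3 \le 4$; this is consistent, so here I would instead re-invoke that $J$ is non-maximal together with a fresh application of Lemma~\ref{uber-red} (or Lemma~\ref{ind-red}) to close the gap — essentially the same mechanism used in Lemma~\ref{GeneralCrunchToOneVertexNeighborhoodSize} to rule out $X=\emptyset$. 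The main obstacle throughout is getting the independence-number additivity across $S_1$, $S_2$, and $J$ to beat $|X|+\ceil{c|N(J)|}$ by exactly the needed margin, so I would be careful to track the $-|J|$ overlap terms and to use $\alpha(G[S_i\cup J])\ge 4$ rather than $\ge 3$ at precisely the points where one extra unit is required.
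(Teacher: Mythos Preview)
Your approach has two genuine gaps that the paper's proof avoids by a single preprocessing step.

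First, the case $X=\{3\}$ (your added singleton $S_3=\{x_1\}$) does \emph{not} yield a contradiction. Since $S_3\subseteq J\subseteq\I(S)$, the inequality becomes $\alpha(G[\I(S)])<1+3=4$, and all you know is $\alpha(G[\I(S)])\ge|J|=3$. That is perfectly consistent. Your suggestion to ``re-invoke that $J$ is non-maximal together with a fresh application of Lemma~\ref{uber-red}'' does not help: the mechanism in Lemma~\ref{GeneralCrunchToOneVertexNeighborhoodSize} squeezes out a contradiction only because there one has $\ceil{c|S|}=\ceil{c(|S|-1)}$, which fails here ($|S|=15$, and $\ceil{\tfrac{3}{13}\cdot15}=4\ne 3=\ceil{\tfrac{3}{13}\cdot14}$). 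So introducing $S_3$ creates a case you cannot close.

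Second, the case $X=\{1,2\}$ (when $S_1\nonadj S_2$) requires $\alpha(G[\I(S)\cup S_1\cup S_2])\ge 5$, and your patching argument does not establish this. Given independent sets $I_i\subseteq S_i\cup J$ with $|I_i|=4$, the pieces $I_1\setminus J$ and $I_2\setminus J$ are indeed nonadjacent, but you cannot freely add back vertices of $J$: a vertex of $I_1\cap J$ may well be adjacent to a vertex of $I_2\setminus J\subseteq S_2$, since nothing prevents $J\adj S_2$. Your additivity estimate $4+4-3=5$ is not justified.

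The paper sidesteps both issues at once. After reducing to $|N(J)|=12$, it first proves $G[J\cup N(J)]$ is connected (using Lemma~\ref{JIsTwo}: if some $x\in J$ had $N(x)$ disjoint from the rest, then $|N(J)|\ge 9+d(x)\ge 14$). It then \emph{enlarges} $S_1$ and $S_2$---keeping them disjoint and connected, and only increasing $\alpha(G[S_i\cup J])$---until $S_1\cup S_2=J\cup N(J)$. Connectedness now forces $S_1\adj S_2$. Applying Lemma~\ref{uber-red} with $t=2$ gives $|X|\le 1$; the case $|X|=1$ is killed by the hypothesis $\alpha(G[S_i\cup J])\ge 4$, and $X=\emptyset$ by $3=|J|\le\alpha(G[\I(S)])<\ceil{\tfrac{3}{13}(15-2)}=3$. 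No third piece, no $S_1\nonadj S_2$ case, no independence-set surgery.
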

\begin{proof}
Suppose not and choose a counterexample minimizing $|J \cup N(J)| - |S_1 \cup
S_2|$.  Clearly $|N(J)|=12$.
First we show that $S_1\cup S_2=J\cup N(J)$. It suffices to show
that $G[J\cup N(J)]$ is connected, since then we can add to either $S_1$ or
$S_2$ any vertex in $N(S_1\cup S_2)\setminus (S_1\cup S_2)$.  In particular, we show
that every $x\in J$ satisfies $(x\cup N(x))\cap(\cup_{y\in (J\setminus
\set{x})}(y\cup N(y)))\ne \emptyset$.  Suppose not.  By Lemma~\ref{JIsTwo}, we
have $|\cup_{y\in J\setminus\{x\}}N(y)|\ge 9$.  Now $|\cup_{y\in
J}N(y)|\ge 9+d(x)\ge 14$, a contradiction.
Now we must have $G[J\cup N(J)]$ connected, so we can assume $S_1 \cup S_2 = J
\cup N(J)$.  Similarly, we assume $S_1\adj S_2$.

Now we apply Lemma~\ref{uber-red} with $S=J\cup N(J)$, $t=2$, and $S_1$ and
$S_2$ as above.  Since $S_1\adj S_2$, we have $|X|\le 1$.  We cannot have
$|X|=1$ since, by hypothesis, $\alpha(G[S_i\cup J])\ge 4$ for each
$i\in\set{1,2}$. So suppose that $X=\emptyset$.  Now we have $ \alpha(G[J]) \ge
|J|=3 = \ceil{\frac3{13}(|J\cup N(J)|-2)} = \ceil{\frac3{13}(3+12-2)}$.  This
contradiction completes the proof.
\end{proof}

\begin{lem}\label{JIsThree}
Let $J = \set{u_1, u_2, u_3}$.  If $J$ is an independent set in a minimal $G$ where 
\begin{enumerate}
\item $N(u_1) \setminus (N(u_2) \cup N(u_3))$ contains an independent 2-set; and 
\item $\alpha(G[J \cup N(u_2) \cup N(u_3)]) \ge 4$,
\end{enumerate}then $\card{N(J)} \ge 13$.
\end{lem}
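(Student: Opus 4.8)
The plan is to force $\card{N(J)}=12$ and then derive a contradiction, distinguishing a generic case that reduces to Lemma~\ref{ThreeHelper} from one degenerate configuration that I would dispose of directly with Lemma~\ref{uber-red}. First I would assume $\card{N(J)}\le 12$. Condition (2) exhibits an independent $4$-set in $G$, so $\alpha(G)\ge 4$; since $G$ is a minimal counterexample to the $\frac3{13}$ bound, $\card{V(G)}>\frac{13}3\cdot 4$, hence $\card{V(G)}\ge 18>15\ge\card J+\card{N(J)}$, so $J$ is not maximal and Lemma~\ref{CrunchToOneVertexNeighborhoodSize} gives $\card{N(J)}\ge 12$; thus $\card{N(J)}=12$. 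Let $\set{a,b}$ be the independent $2$-set furnished by (1) and set $S_1=\set{u_1,a,b}$; since $a,b\in N(u_1)\setminus(N(u_2)\cup N(u_3))$ and $a,b\notin J$, the set $\set{a,b,u_2,u_3}$ is independent, so $\alpha(G[S_1\cup J])\ge 4$, and $G[S_1]$ is connected.

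If $G[\set{u_2,u_3}\cup N(u_2)\cup N(u_3)]$ is connected, I would simply set $S_2=\set{u_2,u_3}\cup N(u_2)\cup N(u_3)$. Then $S_1\cap S_2=\emptyset$ (as $a,b,u_1\notin N(u_2)\cup N(u_3)$), $G[S_2]$ is connected, and $S_2\cup J=J\cup N(u_2)\cup N(u_3)$, so $\alpha(G[S_2\cup J])\ge 4$ by (2). Applying Lemma~\ref{ThreeHelper} to $S_1$ and $S_2$ yields $\card{N(J)}\ge 13$, contradicting $\card{N(J)}=12$.

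The remaining case is that $G[\set{u_2,u_3}\cup N(u_2)\cup N(u_3)]$ is disconnected. Since each $G[\set{u_i}\cup N(u_i)]$ is connected, this forces $N(u_2)\cap N(u_3)=\emptyset$ and $N(u_2)\nonadj N(u_3)$. Then $\card{N(J)}\ge\card{N(u_2)\cup N(u_3)}+\card{N(u_1)\setminus(N(u_2)\cup N(u_3))}\ge(d(u_2)+d(u_3))+2\ge 12$, so equality holds everywhere: $d(u_2)=d(u_3)=5$ and $N(u_1)\setminus(N(u_2)\cup N(u_3))=\set{a,b}$. Also each $u_i$ shares a neighbor with one of the other two (otherwise $\card{N(J)}\ge d(u_i)+\card{N(u_j)\cup N(u_k)}\ge 5+9=14$ by Lemma~\ref{JIsTwo}), which together with $N(u_2)\cap N(u_3)=\emptyset$ gives $N(u_1)\cap N(u_2)\ne\emptyset$ and $N(u_1)\cap N(u_3)\ne\emptyset$. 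Since $G[N(u_2)]$ is a $5$-cycle and $G[N(u_1)\cap N(u_2)]$ is a disjoint union of copies of $K_1$ and $K_2$ (Lemma~\ref{BasicPlanarFacts}), $N(u_2)\setminus N(u_1)$ contains two nonadjacent vertices $z_1,z_2$, and symmetrically $N(u_3)\setminus N(u_1)$ contains nonadjacent $z_1',z_2'$. Now I would apply Lemma~\ref{uber-red} with $S=J\cup N(J)$ (so $\card S=15$), $t=3$, and the pairwise-disjoint connected pieces $\set{u_1,a,b}$, $C_2:=\set{u_2}\cup N(u_2)$, $C_3:=\set{u_3}\cup N(u_3)$; here $\ceil{\frac3{13}(\card S-3)}=\ceil{\frac{36}{13}}=3$ and $J\subseteq\I(S)$. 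Since $\set{u_1,a,b}\adj C_2$, $\set{u_1,a,b}\adj C_3$, and $C_2\nonadj C_3$, the only index sets $X$ with pairwise nonadjacent pieces are $\emptyset,\set 1,\set 2,\set 3,\set{2,3}$, and for each one $\alpha\parens{G\brackets{\I(S)\cup\bigcup_{i\in X}S_i}}\ge\card X+3$: take $J$, then $\set{a,b,u_2,u_3}$, then $\set{u_1,u_3,z_1,z_2}$, then $\set{u_1,u_2,z_1',z_2'}$, then $\set{u_1,z_1,z_2,z_1',z_2'}$, the last three being independent because $z_1,z_2,z_1',z_2'\notin N(u_1)$, because $N(u_2)\cap N(u_3)=\emptyset$, and because $N(u_2)\nonadj N(u_3)$. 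This contradicts the conclusion of Lemma~\ref{uber-red}, which finishes the proof.

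I expect the disconnected case to be the main obstacle: one must recognize that disconnectivity pins down the very rigid local structure ($d(u_2)=d(u_3)=5$ with $N(u_2),N(u_3)$ disjoint and mutually nonadjacent), and then see that it is exactly this rigidity that produces the independent pairs $\set{z_1,z_2}$ and $\set{z_1',z_2'}$ needed to make the three-piece application of Lemma~\ref{uber-red} work; getting the adjacency bookkeeping right so that the index sets $\set{1,2},\set{1,3},\set{1,2,3}$ are correctly excluded is the other point requiring care.
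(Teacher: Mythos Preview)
Your proof is correct and follows essentially the same two-case structure as the paper: apply Lemma~\ref{ThreeHelper} with $S_1=\{u_1,a,b\}$ and $S_2=\{u_2,u_3\}\cup N(u_2)\cup N(u_3)$ in the generic case, and handle the remaining degenerate case separately. The only real differences are cosmetic. The paper splits on the slightly coarser condition $N(u_2)\cap N(u_3)\ne\emptyset$ (rather than connectivity of $G[S_2]$), and in the degenerate case it simply notes that $N(u_2)\setminus(N(u_1)\cup N(u_3))=N(u_2)\setminus N(u_1)$ contains an independent $2$-set and invokes Lemma~\ref{ind-red} with $k=1$; this is shorter than your direct three-piece application of Lemma~\ref{uber-red}, though your version exploits the extra rigidity (in particular $N(u_2)\nonadj N(u_3)$, which supplies the independent $5$-set for $X=\{2,3\}$) and is equally valid.
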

\begin{proof}
Since $G$ is a planar triangulation with minimum degree $5$ and at least three
$6^+$-vertices by Lemma~\ref{alpha55}, we have $5|G| + 3 \le 2|E(G)| = 6|G| -
12$ and hence $|G| \ge 15$.  Thus $J$ cannot be a maximal independent set when
$\card{N(J)} \le 11$. So, by Lemma~\ref{CrunchToOneVertexNeighborhoodSize}, we
know that $\card{N(J)} \ge 12$.  Let $I$ be an independent set of size $2$ in
$N(u_1) \setminus (N(u_2) \cup N(u_3))$.

First, suppose $N(u_2) \cap N(u_3) \ne \emptyset$. We apply Lemma
\ref{ThreeHelper} with $S_1 = \set{u_1} \cup I$ and $S_2 = \set{u_2, u_3} \cup
N(u_2) \cup N(u_3)$.  Clearly, $G[S_1]$ is connected.  Also, $G[S_2]$ is
connected since $N(u_2) \cap N(u_3) \ne \emptyset$, by assumption.  The set $I
\cup \set{u_2, u_3}$ shows that $\alpha(G[S_1 \cup J]) \ge 4$ and hypothesis
(2) shows that $\alpha(G[S_2 \cup J]) \ge 4$.  So the hypotheses of Lemma
\ref{ThreeHelper} are satisfied, giving $\card{N(J)} \ge 13$.

Instead, suppose $N(u_2) \cap N(u_3) = \emptyset$.  
This implies $N(u_2)\setminus(N(u_1)\cup N(u_3))=N(u_2)\setminus N(u_1)$.
If $N(u_2) \setminus N(u_1)$ contains an independent 2-set as well, then
applying Lemma~\ref{ind-red} with $k=1$ gives $\card{N(J)} \ge 13$, as desired. 
Otherwise, $|N(u_2) \setminus N(u_1)| \le 2$, so $G[N(u_2) \cap N(u_1)]$
contains $P_3$, contradicting Lemma~\ref{BasicPlanarFacts}(c).
\end{proof}

One particular case of Lemma~\ref{JIsThree} is easy to verify in our
applications, so we state it separately, as Lemma~\ref{JIsThreeWithSeven}. 
First, we need the following lemma.

\begin{lem}\label{BigAlphaWithSevenPlus}
Let $v$ be a $7^+$-vertex in $G$.  If $S \subseteq V(G)$ with $\set{v} \cup
N(v) \subseteq S$ and $|S| \ge 10$, then $\alpha(G[S]) \ge 4$.
\end{lem}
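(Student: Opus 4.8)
The plan is to induct on nothing and instead split on the degree of $v$. If $d(v)\ge 8$ then, by Lemma~\ref{BasicPlanarFacts}(a), $G[N(v)]$ is a cycle of length $d(v)$, and $\alpha(C_{d(v)})=\floor{d(v)/2}\ge 4$; since $N(v)\subseteq S$, this already gives $\alpha(G[S])\ge 4$. So the substance is the case $d(v)=7$, where $C:=G[N(v)]$ is an induced $7$-cycle $u_0u_1\cdots u_6$ (indices mod $7$) with $\alpha(C)=3$, and we must use vertices of $S$ outside $N(v)$. Since $|S|\ge 10$ and $|\set{v}\cup N(v)|=8$, there are at least two vertices in $S\setminus(\set{v}\cup N(v))$; fix distinct $w_1,w_2$ among them. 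A useful bookkeeping fact is that the seven independent $3$-sets of $C$ are exactly the sets $T_i=\set{u_i,u_{i+2},u_{i+4}}$, and each $u_j$ lies in exactly three of them.

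First I would dispose of the easy subcase: if some $T_i$ is disjoint from $N(w_1)$, then $\set{w_1}\cup T_i\subseteq S$ is independent of size $4$ (it is independent because $w_1\notin N(v)\supseteq T_i$ and $w_1$ has no neighbor in $T_i$), and we are done. So assume $N(w_1)\cap V(C)$ meets every $T_i$. By Lemma~\ref{BasicPlanarFacts}(c), $N(w_1)\cap V(C)=N(w_1)\cap N(v)$ induces a disjoint union of $K_1$'s and $K_2$'s, i.e. it contains no three consecutive vertices of $C$; a short finite check then shows that a subset of $V(C_7)$ with this property that meets all seven $T_i$ must be, up to rotation, either $\set{u_0,u_1,u_4}$ or $\set{u_0,u_1,u_3,u_4}$. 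In particular, between any two cyclically consecutive elements of $N(w_1)\cap V(C)$ there are at most two other vertices of $C$.

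The hard part will be the planar bookkeeping that follows. Since $G$ is a triangulation with no separating $3$-cycle, each $3$-cycle $vu_iu_{i+1}$ bounds a face, so the wheel with hub $v$ and rim $C$ fills the closed disk bounded by $C$; hence every vertex of $G$ other than $v$ and those on $C$ lies in the outer region of $C$. In particular $w_1$ does, and the edges from $w_1$ to $N(w_1)\cap V(C)$ subdivide that outer region into faces, each bounded by $w_1$ together with an arc of $C$ joining two cyclically consecutive elements of $N(w_1)\cap V(C)$; by the previous paragraph every such arc has at most four vertices. The vertex $w_2$ lies in exactly one of these faces, so every neighbor of $w_2$ on $C$ lies on the corresponding arc; thus $N(w_2)\cap V(C)$ is contained in a path $P$ of $C$ with at most four vertices. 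Applying Lemma~\ref{BasicPlanarFacts}(c) to $w_2$, the set $N(w_2)\cap V(C)$ is a subset of $P$ with no three consecutive vertices, hence has at most three elements; a direct check over the few such subsets shows that some $T_i$ is disjoint from $N(w_2)\cap V(C)$. Then $\set{w_2}\cup T_i$ is the desired independent $4$-set in $G[S]$, completing the proof.

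I expect essentially all the difficulty to be concentrated in the last paragraph: carefully justifying that no vertex of $G$ lies strictly inside $C$, that $w_1$'s edges to $C$ partition the outer region into the claimed arc-bounded faces, and that $w_2$ therefore sees only one short arc of $C$. Everything else is either an immediate application of Lemma~\ref{BasicPlanarFacts} or a small finite verification (classifying the subsets of $V(C_7)$ with no three consecutive vertices that hit all seven sets $T_i$, and checking that a no-three-consecutive subset of any four-vertex path of $C_7$ always avoids some $T_i$).
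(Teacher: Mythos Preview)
Your proof is correct and takes a genuinely different route from the paper's. Both begin identically: the $d(v)\ge 8$ case is immediate from Lemma~\ref{BasicPlanarFacts}(a), and for $d(v)=7$ both pick $w_1,w_2\in S\setminus(\{v\}\cup N(v))$ and observe that if $N(v)\setminus N(w_i)$ contains an independent $3$-set then we are done. From there the arguments diverge. The paper treats $w_1$ and $w_2$ symmetrically: it shows that each $H_i:=G[N(v)\setminus N(w_i)]$ has at most four vertices and exactly two components, hence is $2K_2$ or has at most three vertices, and then argues that these constraints force a $K_{3,3}$-subdivision with branch parts $\{v,w_1,w_2\}$ and three of the $u_j$, contradicting planarity. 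Your argument is asymmetric and more geometric: you classify $N(w_1)\cap V(C)$ explicitly (up to rotation, $\{u_0,u_1,u_4\}$ or $\{u_0,u_1,u_3,u_4\}$), then use the fact that the wheel on $v$ fills one side of $C$ and a Jordan-curve argument on the edges $w_1u_j$ to confine $N(w_2)\cap V(C)$ to an arc of $C$ with at most four vertices, which cannot meet every $T_i$. Your approach is more explicit and self-contained at the cost of the planar bookkeeping you flag; the paper's approach is slicker in outline but its final ``in each case we get a subdivision of $K_{3,3}$'' hides a comparable small case analysis.
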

\begin{proof}
If $d(v) \ge 8$, then the neighbors of $v$ induce an $8^+$-cycle (by
Lemma~\ref{BasicPlanarFacts}(a)), which has independence number at least 4; so
we are done.  So suppose $d(v) = 7$.  Let $u_1,\ldots,u_7$ denote the neighbors
of $v$ in clockwise order; note that $G[N(v)]$ is a 7-cycle, again by
Lemma~\ref{BasicPlanarFacts}(a). Pick $w_1, w_2 \in S \setminus \parens{\set{v}
\cup N(v)}$.  Let $H_i = G[N(v) \setminus N(w_i)]$ for each $i \in \set{1,2}$.
If $H_i$ contains an independent $3$-set $J$ for some $i \in \set{1,2}$, then
$J\cup\{w_i\}$ is the desired independent 4-set, so we are done.  Therefore, we
must have $|H_i| \le 4$ for each $i \in \set{1,2}$.  So, $\card{N(v) \cap
N(w_i)} \ge 3$ and hence Lemma~\ref{BasicPlanarFacts}(c) shows that $N(v)\cap
N(w_i)$ has at least two components; therefore, so does $H_i$. 
It must have exactly two components or we get an independent $3$-set in $H_i$.
Similarly, if $|H_i| = 4$, then $H_i$ has no isolated vertex.  So, either $H_i$
is $2K_2$ or $|H_i| \le 3$. 
Now in each case we get a subdivision of $K_{3,3}$; the branch vertices of one
part are $v,w_1,w_2$ and the branch vertices of the other are three of the
$u_i$.  This contradiction finishes the proof.
\end{proof}

\begin{lem}\label{JIsThreeWithSeven}
Let $J = \set{u_1, u_2, u_3}$. If $J$ is an independent set in a minimal $G$ where 
\begin{enumerate}
\item $N(u_1) \setminus (N(u_2) \cup N(u_3))$ contains an independent 2-set; and 
\item $G[J \cup N(u_2) \cup N(u_3)]$ contains a $7^+$-vertex and its neighborhood,
\end{enumerate}then $\card{N(J)} \ge 13$.
\end{lem}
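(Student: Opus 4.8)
The plan is to reduce Lemma~\ref{JIsThreeWithSeven} directly to Lemma~\ref{JIsThree}, since the two hypotheses are almost identical: condition (1) is verbatim the same, so the only work is to deduce hypothesis (2) of Lemma~\ref{JIsThree}, namely $\alpha(G[J \cup N(u_2) \cup N(u_3)]) \ge 4$, from the assumption that $G[J \cup N(u_2) \cup N(u_3)]$ contains a $7^+$-vertex $v$ together with all of $N(v)$. The natural tool for this is Lemma~\ref{BigAlphaWithSevenPlus}, which says exactly that if $S \supseteq \set{v} \cup N(v)$ for a $7^+$-vertex $v$ and $|S| \ge 10$, then $\alpha(G[S]) \ge 4$.

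So the key step is to verify the size bound $|J \cup N(u_2) \cup N(u_3)| \ge 10$. First I would note that $|J| = 3$ and that $J$ is disjoint from $N(u_2) \cup N(u_3)$ (since $J$ is independent, no $u_i$ lies in the neighborhood of another $u_j$, and a vertex cannot be its own neighbor). Then I would bound $|N(u_2) \cup N(u_3)|$ from below: by Lemma~\ref{JIsTwo} applied to the independent $2$-set $\set{u_2,u_3}$ (it is independent as a subset of $J$), we get $|N(\set{u_2,u_3})| = |N(u_2) \cup N(u_3)| \ge 9$. (Strictly one should check $\set{u_2,u_3}$ is not maximal, but this follows since $G$ is large — e.g.\ $|G|\ge 15$ as in the proof of Lemma~\ref{JIsThree} — and Lemma~\ref{CrunchToOneVertexNeighborhoodSize}/Lemma~\ref{JIsTwo} are stated for any independent $2$-set.) Hence $|J \cup N(u_2) \cup N(u_3)| \ge 3 + 9 = 12 \ge 10$, comfortably meeting the hypothesis of Lemma~\ref{BigAlphaWithSevenPlus}.

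Applying Lemma~\ref{BigAlphaWithSevenPlus} with $S = J \cup N(u_2) \cup N(u_3)$ and the given $7^+$-vertex $v$ (whose closed neighborhood lies in $S$ by hypothesis (2)) yields $\alpha(G[S]) \ge 4$, which is precisely hypothesis (2) of Lemma~\ref{JIsThree}. Since hypothesis (1) of Lemma~\ref{JIsThree} is identical to hypothesis (1) here, Lemma~\ref{JIsThree} now gives $\card{N(J)} \ge 13$, completing the proof.

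I do not expect a genuine obstacle here; this lemma is essentially a convenience repackaging. The only point requiring a moment's care is the bookkeeping that $J$ is disjoint from $N(u_2)\cup N(u_3)$ and that the $2$-set $\{u_2,u_3\}$ is non-maximal so that Lemma~\ref{JIsTwo} applies — but both are immediate from $J$ being independent and $G$ being large (Lemma~\ref{BasicPlanarFacts}(b) already gives $|G|\ge 12$). Everything else is a one-line invocation of Lemmas~\ref{JIsTwo}, \ref{BigAlphaWithSevenPlus}, and \ref{JIsThree} in sequence.
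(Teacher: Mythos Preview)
Your proposal is correct and essentially identical to the paper's proof: the paper also applies Lemma~\ref{JIsThree}, uses Lemma~\ref{BigAlphaWithSevenPlus} to verify hypothesis~(2), and checks $|S|\ge 10$ via Lemma~\ref{JIsTwo} to obtain $|J\cup N(u_2)\cup N(u_3)|\ge 3+9=12$. Your extra remarks about disjointness and non-maximality are harmless elaborations of what the paper leaves implicit.
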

\begin{proof}
We apply Lemma~\ref{JIsThree} using Lemma~\ref{BigAlphaWithSevenPlus} to verify
hypothesis (2).  To do so, we let $S=\{u_1,u_2,u_3\}\cup N(u_2)\cup N(u_3)$, and
we need that $|\{u_1,u_2,u_3\}\cup N(u_2) \cup N(u_3)| \ge 10$.
This is immediate from Lemma~\ref{JIsTwo}, since $|S|\ge |\{u_1, u_2,
u_3\}|+|N(u_2)\cup N(u_3)|\ge 3+9=12$.
\end{proof}

\begin{lem}\label{ThreeHelperFourteen}
Let $J$ be an independent 3-set in $G$. Choose $S_1, S_2, S_3
\subseteq J \cup N(J)$ such that 
$G[S_i]$ is connected and
$S_i \cap S_j = \emptyset$ 
for all distinct $i,j \in \set{1,2,3}$. If
$|N(J)| \le 13$, then either
\begin{enumerate}
\item $S_i\nonadj S_j$ for some $\set{i,j} \subseteq \set{1,2,3}$; or
\item $\alpha(G[S_i \cup J]) \le 3$ for some $i \in \set{1,2,3}$.
\end{enumerate}
\end{lem}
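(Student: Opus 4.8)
The plan is to argue by contradiction in essentially the same way as Lemma~\ref{ThreeHelper}, but now with three connected pieces $S_1, S_2, S_3$ instead of two, and with the weaker conclusion $|N(J)| \le 13$ (rather than $12$) as the hypothesis. So I would suppose that $|N(J)| \le 13$ and that neither (1) nor (2) holds; that is, $S_i \adj S_j$ for all distinct $i,j \in \set{1,2,3}$, and $\alpha(G[S_i \cup J]) \ge 4$ for each $i$. The goal is to derive a contradiction by applying Lemma~\ref{uber-red} with $S = J \cup N(J)$, $t = 3$, and the given $S_1, S_2, S_3$.

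First, as in Lemma~\ref{ThreeHelper}, I would like to reduce to the case $S_1 \cup S_2 \cup S_3 = J \cup N(J)$. To do this I need $G[J \cup N(J)]$ connected; the argument from Lemma~\ref{ThreeHelper} carries over verbatim: if some $x \in J$ is disjoint (in closed neighborhood) from $\bigcup_{y \in J \setminus \set{x}}(y \cup N(y))$, then by Lemma~\ref{JIsTwo} that union has at least $9$ vertices, so $|N(J)| \ge 9 + d(x) \ge 14$, contradicting $|N(J)| \le 13$. (Here I should double-check: Lemma~\ref{ThreeHelper} also needs $|N(J)| \ge 12$ from Lemma~\ref{CrunchToOneVertexNeighborhoodSize}, which holds automatically since $J$ being non-maximal follows from $|G| \ge 12$ via Lemma~\ref{BasicPlanarFacts}(b) once $|N(J)| \le 13$; but actually I may not even need the lower bound here.) Once $G[J \cup N(J)]$ is connected, any vertex of $N(S_1 \cup S_2 \cup S_3) \setminus (S_1 \cup S_2 \cup S_3)$ can be absorbed into one of the $S_i$ without affecting connectivity of that piece or the pairwise-adjacency/disjointness hypotheses, so I may assume $S_1 \cup S_2 \cup S_3 = J \cup N(J)$.

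Now apply Lemma~\ref{uber-red}: it produces $X \subseteq \set{1,2,3}$ with $S_i \nonadj S_j$ for all distinct $i,j \in X$ and $\alpha\parens{G\brackets{\I(S) \cup \bigcup_{i \in X} S_i}} < |X| + \ceil{c(|S| - 3)}$, where $c = \frac{3}{13}$ and $|S| = |J| + |N(J)| = 3 + |N(J)| \le 16$. Since we assumed $S_i \adj S_j$ for all distinct $i, j$, we have $|X| \le 1$. If $|X| = 1$, say $X = \set{i}$, then since $J \subseteq \I(S)$ we get $\alpha(G[S_i \cup J]) \le \alpha(G[\I(S) \cup S_i]) < 1 + \ceil{c(|S|-3)} \le 1 + \ceil{\frac{3}{13}\cdot 13} = 4$, i.e. $\alpha(G[S_i \cup J]) \le 3$, contradicting the assumption that (2) fails. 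If $X = \emptyset$, then $\alpha(G) \ge \alpha(G[\I(S)]) \ge |J| = 3$, but the bound reads $\alpha(G[\I(S)]) < \ceil{c(|S|-3)} \le \ceil{\frac{3}{13}\cdot 13} = 3$, again a contradiction. Either way we are done.

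The one place requiring care is the arithmetic $\ceil{\tfrac{3}{13}(|S|-3)} \le 3$: since $|N(J)| \le 13$ we have $|S| - 3 = |N(J)| \le 13$, so $\tfrac{3}{13}(|S|-3) \le 3$ and its ceiling is at most $3$ — this is exactly why $13$ is the right threshold and is the crux of why the lemma is stated with $|N(J)| \le 13$. The main (mild) obstacle is just making sure the reduction to $S_1 \cup S_2 \cup S_3 = J \cup N(J)$ is legitimate, i.e. that absorbing stray vertices preserves the hypotheses — but this is handled exactly as in Lemma~\ref{ThreeHelper}, so I do not expect real difficulty.
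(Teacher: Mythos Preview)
Your proof is correct and takes essentially the same route as the paper: apply Lemma~\ref{uber-red} with $S = J \cup N(J)$ and $t=3$, use failure of (1) to force $|X| \le 1$, and check the arithmetic $\ceil{\tfrac{3}{13}\cdot 13} = 3$. Your reduction to $S_1 \cup S_2 \cup S_3 = J \cup N(J)$ is unnecessary, though harmless---Lemma~\ref{uber-red} does not require the $S_i$ to cover $S$, and since you are already assuming failure of (1), the bound $|X|\le 1$ holds without any enlargement; the paper's proof omits this step entirely.
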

\begin{proof}
This is an immediate corollary of Lemma~\ref{uber-red} with $S=J\cup N(J)$ and
$t=3$.  If
$S_i\adj S_j$ for all $\set{i,j}\in\set{1,2,3}$, then in Lemma~\ref{uber-red} either
$|X|=1$ or $|X|=0$.  We cannot have $|X|=0$, since $\alpha(G[\I(S)])\ge
\alpha(G[J])\ge |J|=3 =\ceil{\frac3{13}(13+3-3)}$.  Hence $|X|=1$, which
implies (2).
\end{proof}

The next lemma can be viewed as a variant on the result we get by applying
Lemma~\ref{ind-red} with $|J|=3$ and $k=0$ (and $c=\frac3{13}$).  As in that
case, we require that each of $N(u_1)\setminus(N(u_2)\cup N(u_3))$ and
$N(u_2)\setminus(N(u_1)\cup N(u_3))$ contains an independent 2-set.
However, here we do not require that $N(u_3)\setminus (N(u_1)\cup N(u_2))$
contains an independent 2-set.  Instead, we have hypothesis
(2) below.  Not surprisingly, the proof 
is similar to that of Lemma~\ref{ind-red}.

\begin{lem}\label{JIsThreeFourteen}
Let 
$J = \set{u_1, u_2, u_3}$. If $J$ is an independent set in a minimal $G$ such
that 
\begin{enumerate}
\item $N(u_i) \setminus (N(u_j) \cup N(u_3))$ contains an independent 2-set $M_i$ for all $\set{i,j} = \set{1,2}$; and
\item $\alpha(G[J \cup V(H)]) \ge 4$, where $H$ is $u_3$'s component in
$G[\set{u_3} \cup N(J)] \setminus (M_1 \cup
M_2)$, 
\end{enumerate}then $\card{N(J)} \ge 14$.
\end{lem}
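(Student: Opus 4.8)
The plan is to follow the template of Lemma~\ref{ind-red} and apply Lemma~\ref{uber-red} with $S=J\cup N(J)$ and $t=3$. I would assume for contradiction that $\card{N(J)}\le 13$, and set $c=\frac3{13}$; then $|S|-t=\card{N(J)}\le 13$, so $\ceil{c(|S|-t)}\le 3$. Since $N(u_i)\subseteq N(J)\subseteq S$ for each $i$, we have $J\subseteq\I(S)$. For the three connected parts I would take $S_1=\set{u_1}\cup M_1$, $S_2=\set{u_2}\cup M_2$, and $S_3=V(H)$. These are pairwise disjoint subsets of $S$: $M_1\subseteq N(u_1)$ while $M_2$ avoids $N(u_1)$, so $M_1\cap M_2=\emptyset$; and $V(H)$ avoids $M_1\cup M_2$ by construction and avoids $u_1,u_2$ because $u_1,u_2\notin N(J)$ (as $J$ is independent). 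Each $G[S_i]$ is connected, since $u_i$ is adjacent to both vertices of $M_i$ and since $H$ is a connected component; and clearly $t=3<|S|$. Thus Lemma~\ref{uber-red} yields a set $X\subseteq\set{1,2,3}$ with $S_i\nonadj S_j$ for all distinct $i,j\in X$ and $\alpha\parens{G\brackets{\I(S)\cup\bigcup_{i\in X}S_i}}<|X|+\ceil{c(|S|-t)}\le|X|+3$.

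To contradict this I would show $\alpha\parens{G\brackets{\I(S)\cup\bigcup_{i\in X}S_i}}\ge|X|+3$ for every such $X$, in two families of cases. If $3\notin X$ (so $X\subseteq\set{1,2}$), then $\parens{J\setminus\set{u_i:i\in X}}\cup\bigcup_{i\in X}M_i$ is an independent set of size $3+|X|$ contained in $\I(S)\cup\bigcup_{i\in X}S_i$: each $M_i$ with $i\in X$ is nonadjacent to $u_j$ for every $j\ne i$ by hypothesis~(1), and if $X=\set{1,2}$ then $M_1\nonadj M_2$ since $S_1\nonadj S_2$. If instead $3\in X$, let $X'=X\cap\set{1,2}$ and fix a maximum independent set $T$ of $G\brackets{J\cup V(H)}$, so $|T|\ge 4$ by hypothesis~(2); since $u_3\in V(H)$ (it is not among the deleted vertices, as $u_3\notin N(J)$) we have $T\setminus\set{u_1,u_2}\subseteq V(H)=S_3$. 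Then $\parens{T\setminus\set{u_i:i\in X'}}\cup\bigcup_{i\in X'}M_i$ is an independent set of size at least $|T|+|X'|\ge 4+(|X|-1)=3+|X|$ contained in $\I(S)\cup\bigcup_{i\in X}S_i$: the vertices of $T\setminus\set{u_i:i\in X'}$ lie in $S_3$ together with $\set{u_j:j\in\set{1,2}\setminus X'}$, while each $M_i$ with $i\in X'$ is nonadjacent to $S_3$ (since $i,3\in X$), to each $u_j$ with $j\ne i$ (by hypothesis~(1)), and to the other $M$ when $X'=\set{1,2}$ (since $S_1\nonadj S_2$). In every case $\alpha\parens{G\brackets{\I(S)\cup\bigcup_{i\in X}S_i}}\ge 3+|X|$, contradicting the bound from Lemma~\ref{uber-red}.

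The argument is essentially bookkeeping once the parts are chosen; disjointness and connectivity of the $S_i$, and the containment of each constructed set in the correct induced subgraph, all follow routinely from hypotheses~(1)--(2) and the independence of $J$. The one place with genuine content is the subcase $3\in X$: there the required independent set cannot be assembled from $J$, $M_1$, and $M_2$ alone, and hypothesis~(2) supplies the missing vertices inside $V(H)$ --- it matters that $u_3\in V(H)$, so that deleting $u_1$ and $u_2$ from a maximum independent set of $G\brackets{J\cup V(H)}$ leaves only vertices of $V(H)$, which are nonadjacent to $M_1\cup M_2$ by the relations $S_i\nonadj S_3$ for $i\in X'$. No separate argument that $\card{N(J)}\ge 12$ or that $J$ is non-maximal is needed, since the assumption $\card{N(J)}\le 13$ already forces $\ceil{c(|S|-t)}\le 3$, which is all the case analysis uses.
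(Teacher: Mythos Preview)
Your proof is correct and takes a different route from the paper's. The paper first argues separately that $u_3$ must be at distance two from each of $u_1,u_2$ (dispatching the alternative via Lemma~\ref{ind-red}), then chooses $S_1,S_2,S_3$ to \emph{partition} $J\cup N(J)$ with $|S_3|$ maximized; the distance-two fact forces $S_3\adj S_1$ and $S_3\adj S_2$, which rules out every $X$ containing $3$ together with another index, and the maximality of $S_3$ is what guarantees $V(H)\subseteq S_3$ in the remaining case $X=\{3\}$. You instead take the minimal choices $S_i=\{u_i\}\cup M_i$ for $i\in\{1,2\}$ and $S_3=V(H)$ exactly, and handle every possible $X$ uniformly: when $3\in X$ together with some $i\in\{1,2\}$, you exploit the very nonadjacency $S_i\nonadj S_3$ that the paper works to exclude, using it to adjoin $M_i$ to a large independent set drawn from $T\subseteq J\cup V(H)$. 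Your approach trades the paper's preliminary reduction and maximization argument for two additional (but mechanically handled) subcases of $X$, and in particular avoids any appeal to Lemma~\ref{ind-red} or to properties of the embedding; the bookkeeping you flag---disjointness of the $S_i$, $u_3\in V(H)$, and $M_i\cap T=\emptyset$---is all verified correctly.
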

\begin{proof}
First, we show that $u_3$ is distance two from each of $u_1$ and $u_2$.  Suppose
not; by symmetry, assume that $u_3$ is distance at least three from $u_1$.  Now
$N(u_3)\setminus(N(u_1)\cup N(u_2)) = N(u_3)\setminus N(u_2)$.  By
Lemma~\ref{BasicPlanarFacts}, $N(u_3)\cap N(u_2)$ consists of disjoint copies
of $K_1$ and $K_2$.  Thus, since $d(u_3)\ge 5$, we see that $N(u_3)\setminus
(N(u_1)\cup N(u_2))$ contains an independent 2-set.  Now, if $\card{N(J)} \le
13$, then applying Lemma~\ref{ind-red} with $k=0$ gives a contradiction. 
Hence, $u_3$ is distance two from each of $u_1$ and $u_2$.  

Choose disjoint subsets $S_1, S_2, S_3 \subset J \cup N(J)$ where $G[S_i]$ is
connected for all $i \in \set{1,2,3}$ and $\set{u_i} \cup M_i \subseteq S_i$
for each $i \in \set{1,2}$ and $u_3 \in S_3$, first maximizing $|S_3|$ and subject to
that maximizing $|S_1| + |S_2| + |S_3|$.  Since $J \subseteq S_1 \cup S_2 \cup
S_3$, maximality of $|S_1| + |S_2| + |S_3|$ gives $S_1 \cup S_2 \cup S_3 = J
\cup N(J)$.  

Now we apply Lemma~\ref{uber-red}, with $S=S_1\cup S_2\cup S_3$.
To get a contradiction, we need only verify, for each possible $X$, that $\alpha(G[\I(S)\cup
\bigcup_{i\in X}S_i]) \ge |X|+\ceil{\frac3{13}(|S|-|J|)}=|X|+3$.  Since $S_3\adj
S_1$ and $S_3\adj S_2$, either $|X|\le 1$ or else $X=\set{1,2}$.
In the latter case, $M_1 \cup M_2 \cup \set{u_3}$ is the desired independent
5-set.  If instead $X=\emptyset$, then $J$ is the desired independent 3-set.

So we must have $X=\set{i}$ for some $i \in \set{1,2,3}$.  If $i \in
\set{1,2}$, then $M_i \cup \set{u_3, u_{3-i}})$ is the desired independent set.
So instead assume that $X=\set{3}$.  But, by the maximality of $|S_3|$, $G[J
\cup S_3]$ contains $u_3$'s component in $G[\set{u_3} \cup N(J)] \setminus M_1
\setminus M_2$.  So by (2), $G[J \cup S_3]$ has an independent 4-set, as desired.
\end{proof}

Again, one particular case of Lemma~\ref{JIsThreeFourteen} is easy to verify, so we state it separately.

\begin{lem}\label{JIsThreeFourteenWithSeven}
Let $J = \set{u_1, u_2, u_3}$.  If $J$ is an independent set in a minimal $G$
such that  
\begin{enumerate}
\item $N(u_i) \setminus (N(u_j) \cup N(u_3))$ contains an independent 2-set $M_i$
for all $\set{i,j} = \set{1,2}$; and 
\item $u_3$'s component $H$ in $G[\set{u_3} \cup N(J)] \setminus (M_1
\cup M_2)$ satisfies $|J \cup V(H)| \ge 10$ and $G[J \cup V(H)]$ contains a
$7^+$ vertex and its neighborhood,
\end{enumerate}then $\card{N(J)} \ge 14$.
\end{lem}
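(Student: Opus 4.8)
The plan is to invoke Lemma~\ref{JIsThreeFourteen} directly, using Lemma~\ref{BigAlphaWithSevenPlus} to discharge its second hypothesis. Hypothesis~(1) of Lemma~\ref{JIsThreeFourteen} is word-for-word hypothesis~(1) here, so there is nothing to do on that front; in particular the independent $2$-sets $M_1$ and $M_2$ are the same, and hence so is $u_3$'s component $H$ in $G[\set{u_3}\cup N(J)]\setminus(M_1\cup M_2)$. It remains only to verify hypothesis~(2) of Lemma~\ref{JIsThreeFourteen}, namely that $\alpha(G[J\cup V(H)])\ge 4$.

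To do this I would set $S = J\cup V(H)$. By hypothesis~(2) of the present lemma, $|S|=|J\cup V(H)|\ge 10$, and $G[S]$ contains some $7^+$-vertex $v$ together with all of its neighborhood, i.e.\ $\set{v}\cup N(v)\subseteq S$. These are exactly the hypotheses of Lemma~\ref{BigAlphaWithSevenPlus} applied to $v$ and $S$, which therefore yields $\alpha(G[S])=\alpha(G[J\cup V(H)])\ge 4$. This is precisely hypothesis~(2) of Lemma~\ref{JIsThreeFourteen}, so with both of its hypotheses in hand we conclude $\card{N(J)}\ge 14$, as desired.

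There is essentially no obstacle here: the lemma is just a repackaging of Lemma~\ref{JIsThreeFourteen} in a form whose harder-looking hypothesis (``$\alpha(G[J\cup V(H)])\ge 4$'') is replaced by something easy to check in the discharging section---a cardinality count on $|J\cup V(H)|$ plus the observation that some $7^+$-vertex and its full neighborhood lie inside $J\cup V(H)$. The only point requiring care is bookkeeping of notation: one must confirm that the set $H$ appearing in hypothesis~(2) of this lemma is literally the same component $H$ referenced in hypothesis~(2) of Lemma~\ref{JIsThreeFourteen} (it is, since $M_1,M_2$ agree), and that $S=J\cup V(H)$ is a legitimate choice for the set called $S$ in Lemma~\ref{BigAlphaWithSevenPlus}, i.e.\ it is a subset of $V(G)$ containing $\set{v}\cup N(v)$. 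Both are immediate.
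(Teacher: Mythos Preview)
Your proposal is correct and follows exactly the paper's own approach: apply Lemma~\ref{JIsThreeFourteen}, using Lemma~\ref{BigAlphaWithSevenPlus} (with $S=J\cup V(H)$) to verify its hypothesis~(2). The paper's proof is a single sentence to this effect; your additional bookkeeping about $H$ and $S$ is sound but not needed.
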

\begin{proof}
We apply Lemma~\ref{JIsThreeFourteen}, using Lemma~\ref{BigAlphaWithSevenPlus} to verify hypothesis (2).  
\end{proof}

Thus far, our lemmas have not focused much on the actual planar embedding of
$G$.  At this point we transition and start analyzing the embedding, as well.

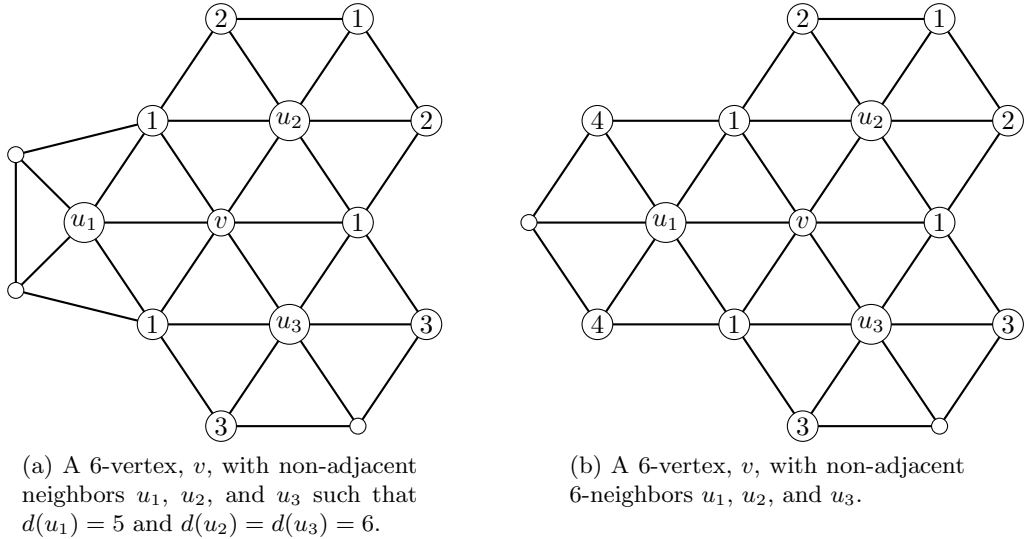
\begin{figure}
\begin{center}
\subfloat[A 6-vertex, $v$, with non-adjacent neighbors $u_1$, $u_2$, and $u_3$ such that
$d(u_1)=5$ and $d(u_2)=d(u_3)=6$.]{\makebox[.45\textwidth]{
\begin{tikzpicture}[xscale = -9, yscale=9]
\tikzstyle{VertexStyle}=[shape = circle, minimum size = 6pt, inner sep = 1.2pt, draw]
\Vertex[x = 0.55, y = 0.55, L = \small {$v$}]{v0}
\Vertex[x = 0.45, y = 0.70, L = \small {$u_2$}]{v1}
\Vertex[x = 0.35, y = 0.55, L = \small {$1$}]{v2}
\Vertex[x = 0.65, y = 0.70, L = \small {$1$}]{v3}
\Vertex[x = 0.45, y = 0.40, L = \small {$u_3$}]{v4}
\Vertex[x = 0.65, y = 0.40, L = \small {$1$}]{v5}
\Vertex[x = 0.75, y = 0.55, L = \small {$u_1$}]{v6}
\Vertex[x = 0.35, y = 0.25, L = \small {}]{v7}
\Vertex[x = 0.25, y = 0.40, L = \small {$3$}]{v8}
\Vertex[x = 0.55, y = 0.25, L = \small {$3$}]{v9}
\Vertex[x = 0.25, y = 0.70, L = \small {$2$}]{v10}
\Vertex[x = 0.35, y = 0.85, L = \small {$1$}]{v11}
\Vertex[x = 0.55, y = 0.85, L = \small {$2$}]{v12}
\Vertex[x = 0.85, y = 0.65, L = \small {}]{v13}
\Vertex[x = 0.85, y = 0.45, L = \small {}]{v14}
\Edge[](v0)(v1)
\Edge[](v0)(v2)
\Edge[](v0)(v4)
\Edge[](v0)(v5)
\Edge[](v0)(v6)
\Edge[](v0)(v3)
\Edge[](v3)(v1)
\Edge[](v3)(v6)
\Edge[](v6)(v5)
\Edge[](v2)(v1)
\Edge[](v2)(v4)
\Edge[](v4)(v5)
\Edge[](v7)(v9)
\Edge[](v7)(v8)
\Edge[](v7)(v4)
\Edge[](v8)(v4)
\Edge[](v9)(v4)
\Edge[](v5)(v9)
\Edge[](v2)(v8)
\Edge[](v1)(v12)
\Edge[](v1)(v11)
\Edge[](v1)(v10)
\Edge[](v10)(v2)
\Edge[](v10)(v11)
\Edge[](v12)(v11)
\Edge[](v12)(v3)
\Edge[](v14)(v6)
\Edge[](v13)(v6)
\Edge[](v14)(v5)
\Edge[](v14)(v13)
\Edge[](v13)(v3)
\end{tikzpicture}}}
\subfloat[A 6-vertex, $v$, with non-adjacent 6-neighbors $u_1$, $u_2$, and
$u_3$.]{\makebox[.45\textwidth]{
\begin{tikzpicture}[xscale = -9, yscale = 9]
\tikzstyle{VertexStyle}=[shape = circle, minimum size = 6pt, inner sep = 1.2pt, draw]
\Vertex[x = 0.550000011920929, y = 0.550000011920929, L = \small {$v$}]{v0}
\Vertex[x = 0.449999988079071, y = 0.699999988079071, L = \small {$u_2$}]{v1}
\Vertex[x = 0.349999994039536, y = 0.550000011920929, L = \small {$1$}]{v2}
\Vertex[x = 0.649999976158142, y = 0.699999988079071, L = \small {$1$}]{v3}
\Vertex[x = 0.449999988079071, y = 0.399999976158142, L = \small {$u_3$}]{v4}
\Vertex[x = 0.649999976158142, y = 0.399999976158142, L = \small {$1$}]{v5}
\Vertex[x = 0.75, y = 0.550000011920929, L = \small {$u_1$}]{v6}
\Vertex[x = 0.349999994039536, y = 0.25, L = \small {}]{v7}
\Vertex[x = 0.25, y = 0.399999976158142, L = \small {$3$}]{v8}
\Vertex[x = 0.550000011920929, y = 0.25, L = \small {$3$}]{v9}
\Vertex[x = 0.25, y = 0.699999988079071, L = \small {$2$}]{v10}
\Vertex[x = 0.349999994039536, y = 0.849999994039536, L = \small {$1$}]{v11}
\Vertex[x = 0.550000011920929, y = 0.849999994039536, L = \small {$2$}]{v12}
\Vertex[x = 0.850000023841858, y = 0.699999988079071, L = \small {$4$}]{v13}
\Vertex[x = 0.850000023841858, y = 0.399999976158142, L = \small {$4$}]{v14}
\Vertex[x = 0.949999988079071, y = 0.550000011920929, L = \small {}]{v15}
\Edge[](v0)(v1)
\Edge[](v0)(v2)
\Edge[](v0)(v4)
\Edge[](v0)(v5)
\Edge[](v0)(v6)
\Edge[](v0)(v3)
\Edge[](v3)(v1)
\Edge[](v3)(v6)
\Edge[](v6)(v5)
\Edge[](v2)(v1)
\Edge[](v2)(v4)
\Edge[](v4)(v5)
\Edge[](v7)(v9)
\Edge[](v7)(v8)
\Edge[](v7)(v4)
\Edge[](v8)(v4)
\Edge[](v9)(v4)
\Edge[](v5)(v9)
\Edge[](v2)(v8)
\Edge[](v1)(v12)
\Edge[](v1)(v11)
\Edge[](v1)(v10)
\Edge[](v10)(v2)
\Edge[](v10)(v11)
\Edge[](v12)(v11)
\Edge[](v12)(v3)
\Edge[](v14)(v6)
\Edge[](v14)(v5)
\Edge[](v13)(v3)
\Edge[](v15)(v13)
\Edge[](v15)(v6)
\Edge[](v15)(v14)
\Edge[](v13)(v6)
\end{tikzpicture}}}
\caption{The two cases of Lemma~\ref{alpha666}.}
\label{figalpha666}
\end{center}
\end{figure}

\begin{lem}
Every minimal $G$ has no 6-vertex $v$ with $6^-$-neighbors $u_1$, $u_2$, and
$u_3$ that are pairwise nonadjacent.
\label{alpha666}
\end{lem}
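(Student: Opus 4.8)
The plan is to apply the reducibility lemmas --- chiefly Lemma~\ref{ind-red} --- to the independent $3$-set $J=\set{u_1,u_2,u_3}$, after first extracting the rigid local picture that planarity forces around $v$. Suppose such a $v$ exists. By Lemma~\ref{BasicPlanarFacts}(a) the graph $G[N(v)]$ is an induced $6$-cycle, and since $u_1,u_2,u_3$ are pairwise nonadjacent they must be its three alternating vertices; so $N(v)=\set{u_1,a,u_2,b,u_3,c}$ in cyclic order, and because $G$ is a triangulation, $a$ is a common neighbor of $u_1,u_2$, $b$ of $u_2,u_3$, and $c$ of $u_3,u_1$. Any two of the $u_i$ are nonadjacent and already share both $v$ and the between-vertex joining them, so Lemma~\ref{alpha55} forces at most one $u_i$ to be a $5$-vertex --- giving exactly the two cases of Figure~\ref{figalpha666}. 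Treating $v,a,b,c$ as forced overlaps yields $\card{N(J)}\le d(u_1)+d(u_2)+d(u_3)-5\le 13$, with equality possible only when all three $u_i$ are $6$-vertices; and $\card{N(J)}\ge 12$ by Lemma~\ref{CrunchToOneVertexNeighborhoodSize} (as $J$ is non-maximal in the large triangulation $G$).

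\emph{Finding private independent $2$-sets.} For a $6$-vertex $u_i$, Lemma~\ref{BasicPlanarFacts}(a) makes $G[N(u_i)]$ an induced $6$-cycle in which $v$ and the two between-vertices incident to $u_i$ occupy three consecutive positions; hence $N(u_i)\setminus\bigcup_{j\ne i}N(u_j)$ lies inside the complementary induced $P_3$, whose two endpoints are a nonadjacent pair and therefore an independent $2$-set --- provided no further neighbor of $u_i$ strays into $N(u_j)\cup N(u_k)$. I would then bound these ``extra'' overlaps: each one drops $\card{N(J)}$ by one, so with $\card{N(J)}\ge 12$ there is at most one extra overlap when all $u_i$ are $6$-vertices and none when some $u_i$ is a $5$-vertex. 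Moreover an extra common neighbor of two $u_i$'s cannot be a between-vertex nor lie in the third neighborhood (that puts a chord in some $G[N(\cdot)]$, against Lemma~\ref{BasicPlanarFacts}(a), or creates a separating $3$-cycle at $v$, against Lemma~\ref{alphaNoTriangles}); and, crucially, it cannot occupy the $P_3$-endpoint adjacent to the common between-vertex $a$ around both $u_1$ and $u_2$, since then the triangles at $a$ force $d(a)=4$, contradicting $\delta(G)=5$. The target of this analysis is: in every case at least two of the three sets $N(u_i)\setminus\bigcup_{j\ne i}N(u_j)$ contain an independent $2$-set.

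\emph{Conclusion.} If all $u_i$ are $6$-vertices and $\card{N(J)}=13$ (the minimal-overlap situation), all three private neighborhoods carry independent $2$-sets, so Lemma~\ref{ind-red} with $k=0$ applies, since $\card{J}=3\ge\frac{3}{13}\cdot 13$; in every other situation $\card{N(J)}\le 12$ and two private $2$-sets already suffice for Lemma~\ref{ind-red} with $k=1$, since $\card{J}=3\ge\frac{3}{13}(12+1)$. Either way this contradicts the relevant lemma, which proves the result.

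\emph{Main obstacle.} The delicate step is the one degenerate subcase left open above: all $u_i$ are $6$-vertices, $\card{N(J)}=12$, and the extra common neighbor $w$ of $u_1,u_2$ sits at the $P_3$-endpoint adjacent to $c$ around $u_1$ and to $b$ around $u_2$, so both private neighborhoods of $u_1$ and $u_2$ collapse to single edges and the clean count stalls. Here $w$ is adjacent to both $b$ and $c$ while $w\nonadj u_3$; applying Lemma~\ref{JIsTwo} to the nonadjacent pair $\set{v,w}$ forces $d(w)\ge 7$, and the induced $4$-cycle $wu_1au_2$ --- with $v$ adjacent to $u_1,a,u_2$ on one side of it --- pins down the link of $w$ and the structure on the $w$-side of the cycle tightly. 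I expect this to give either a direct planarity/degree-counting contradiction (the $w$-side of the $4$-cycle being forced into a configuration with too many edges to triangulate as a disk) or an application of a sharper reducibility lemma from the Lemma~\ref{JIsThree}--\ref{JIsThreeFourteenWithSeven} family, or Lemma~\ref{uber-red} directly, with $u_3$ in the distinguished role. Essentially all the work lives in this subcase; the rest is bookkeeping with the counting lemmas.
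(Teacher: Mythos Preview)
Your setup and the routine cases are correct and match the paper's approach exactly: the local picture around $v$, the count $12\le\card{N(J)}\le 13$, and the application of Lemma~\ref{ind-red} with $k=0$ (all three private $P_3$'s intact) or $k=1$ (at most one destroyed) all go through as you describe.

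The gap is precisely the case you flag as the ``Main obstacle'' and then leave open. You are right that this is the crux, but speculation is not a proof, and your suggested line --- bounding $d(w)$ via Lemma~\ref{JIsTwo} and hoping for a planarity contradiction --- is a detour that does not close the case. The configuration with $w$ adjacent to $u_1,u_2,b,c$ and $d(w)\ge 7$ is not by itself contradictory.

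The paper resolves this case cleanly with Lemma~\ref{JIsThree}, and the missing idea is the choice of independent $4$-set for hypothesis~(2). After reducing by symmetry and the no-separating-$3$-cycle constraint to one specific identification (say the extra overlap is between the outer $P_3$'s of $u_2$ and $u_3$, so $u_1$ is unaffected), take as hypothesis~(1) the two endpoints of $u_1$'s outer $P_3$. For hypothesis~(2), the independent $4$-set in $J\cup N(u_2)\cup N(u_3)$ is the three between-vertices $a,b,c$ together with the \emph{middle} vertex of $u_2$'s outer $P_3$. Each pair among these four has a common neighbor (for the three between-vertices this is $v$ and the relevant $u_i$; for the middle vertex paired with a between-vertex, one common neighbor is $u_2$ or the identified vertex $w$), so any adjacency among them would create a separating $3$-cycle. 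That gives $\alpha(G[J\cup N(u_2)\cup N(u_3)])\ge 4$, and Lemma~\ref{JIsThree} yields $\card{N(J)}\ge 13$, contradicting $\card{N(J)}=12$.

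So the fix is short once you see it: don't chase $d(w)$; instead exhibit the explicit $4$-set above and invoke Lemma~\ref{JIsThree}.
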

\begin{proof}
Lemma~\ref{CrunchToOneVertexNeighborhoodSize}, applied with
$J=\{u_1,u_2,u_3\}$, yields $12 \le \card{N(\set{u_1, u_2, u_3})} \le d(u_1) +
d(u_2) + d(u_3) - 5$.  Hence, by symmetry, assume that the vertices are
arranged as in Figure~\ref{figalpha666}(a) with all vertices distinct as drawn
or as in Figure~\ref{figalpha666}(b) with at most one pair of vertices
identified.

The first case is impossible by Lemma~\ref{ind-red} with $k=1$, using the
vertices labeled $2$ for $u_2$ and those labeled $3$ for $u_3$.  When the
vertices in Figure~\ref{figalpha666}(b) are distinct as drawn, we apply
Lemma~\ref{ind-red} with $k=0$, using the vertices labeled $2$ for $u_2$, the
vertices labeled $3$ for $u_3$, and those labeled $4$ for $u_1$.  Instead, by
symmetry and the fact that $G$ contains no separating 3-cycle, assume
that the vertices labeled $2$ and $3$ that are drawn at distance four are
identified; so $\card{N(\set{u_1, u_2, u_3})}=12$.
  Now the pairs of vertices labeled $1$ each have a common neighbor,
so the vertices labeled 1 must be an independent set, to avoid a separating
3-cycle.  Now, we apply Lemma~\ref{JIsThree}, using the
vertices labeled $4$ for the independent $2$-set.  This implies that 
$\card{N(\set{u_1, u_2, u_3})}\ge 13$, which contradicts our conclusion above
that $\card{N(\set{u_1, u_2, u_3})}=12$.
\end{proof}

\begin{figure}
\begin{center}
\subfloat[The forbidden configuration.]{\makebox[.45\textwidth]{
\begin{tikzpicture}[scale = 10]
\tikzstyle{VertexStyle}=[shape = circle, minimum size = 6pt, inner sep = 1.2pt, draw]
\Vertex[x = 0.45, y = 0.55, L = \small {}]{v0}
\Vertex[x = 0.45, y = 0.70, L = \small {$u_1$}]{v1}
\Vertex[x = 0.60, y = 0.65, L = \small {}]{v2}
\Vertex[x = 0.60, y = 0.50, L = \small {$u_2$}]{v3}
\Vertex[x = 0.60, y = 0.80, L = \small {}]{v4}
\Vertex[x = 0.75, y = 0.55, L = \small {}]{v5}
\Vertex[x = 0.75, y = 0.70, L = \small {$u_3$}]{v6}
\Vertex[x = 0.30, y = 0.65, L = \small {$z$}]{v7}
\Vertex[x = 0.35, y = 0.85, L = \small {$x$}]{v8}
\Vertex[x = 0.75, y = 0.85, L = \small {}]{v9}
\Vertex[x = 0.90, y = 0.80, L = \small {}]{v10}
\Vertex[x = 0.90, y = 0.65, L = \small {}]{v11}
\Vertex[x = 0.45, y = 0.40, L = \small {$w$}]{v12}
\Vertex[x = 0.55, y = 0.35, L = \small {}]{v13}
\Vertex[x = 0.65, y = 0.35, L = \small {}]{v14}
\Vertex[x = 0.75, y = 0.40, L = \small {$y$}]{v15}
\Edge[](v1)(v0)
\Edge[](v1)(v7)
\Edge[](v2)(v0)
\Edge[](v2)(v1)
\Edge[](v2)(v3)
\Edge[](v3)(v0)
\Edge[](v4)(v1)
\Edge[](v4)(v2)
\Edge[](v5)(v2)
\Edge[](v5)(v3)
\Edge[](v6)(v2)
\Edge[](v6)(v4)
\Edge[](v6)(v5)
\Edge[](v6)(v9)
\Edge[](v6)(v10)
\Edge[](v6)(v11)
\Edge[](v7)(v0)
\Edge[](v8)(v1)
\Edge[](v8)(v4)
\Edge[](v8)(v7)
\Edge[](v9)(v4)
\Edge[](v9)(v10)
\Edge[](v11)(v5)
\Edge[](v11)(v10)
\Edge[](v12)(v0)
\Edge[](v12)(v3)
\Edge[](v13)(v3)
\Edge[](v13)(v12)
\Edge[](v14)(v3)
\Edge[](v14)(v13)
\Edge[](v14)(v15)
\Edge[](v15)(v3)
\Edge[](v15)(v5)
\end{tikzpicture}}}
\subfloat[The $x \nonadj y$ case.]{\makebox[.45\textwidth]{
\begin{tikzpicture}[scale = 10]
\tikzstyle{VertexStyle}=[shape = circle, minimum size = 6pt, inner sep = 1.2pt, draw]
\Vertex[x = 0.45, y = 0.55, L = \small {$3,I_3$}]{v0}
\Vertex[x = 0.45, y = 0.70, L = \small {$1,I_1,I_2$}]{v1}
\Vertex[x = 0.60, y = 0.65, L = \small {$3$}]{v2}
\Vertex[x = 0.60, y = 0.50, L = \small {$2,I_1$}]{v3}
\Vertex[x = 0.60, y = 0.80, L = \small {$1$}]{v4}
\Vertex[x = 0.75, y = 0.55, L = \small {$3$}]{v5}
\Vertex[x = 0.75, y = 0.70, L = \small {$3,I_2,I_3$}]{v6}
\Vertex[x = 0.30, y = 0.65, L = \small {$3$}]{v7}
\Vertex[x = 0.35, y = 0.85, L = \small {$3,I_3$}]{v8}
\Vertex[x = 0.75, y = 0.85, L = \small {$1,I_1$}]{v9}
\Vertex[x = 0.90, y = 0.80, L = \small {$1$}]{v10}
\Vertex[x = 0.90, y = 0.65, L = \small {$1,I_1$}]{v11}
\Vertex[x = 0.45, y = 0.40, L = \small {$2,I_2$}]{v12}
\Vertex[x = 0.55, y = 0.35, L = \small {$2$}]{v13}
\Vertex[x = 0.65, y = 0.35, L = \small {$2,I_2$}]{v14}
\Vertex[x = 0.75, y = 0.40, L = \small {$3,I_3$}]{v15}
\Edge[](v1)(v0)
\Edge[](v1)(v7)
\Edge[](v2)(v0)
\Edge[](v2)(v1)
\Edge[](v2)(v3)
\Edge[](v3)(v0)
\Edge[](v4)(v1)
\Edge[](v4)(v2)
\Edge[](v5)(v2)
\Edge[](v5)(v3)
\Edge[](v6)(v2)
\Edge[](v6)(v4)
\Edge[](v6)(v5)
\Edge[](v6)(v9)
\Edge[](v6)(v10)
\Edge[](v6)(v11)
\Edge[](v7)(v0)
\Edge[](v8)(v1)
\Edge[](v8)(v4)
\Edge[](v8)(v7)
\Edge[](v9)(v4)
\Edge[](v9)(v10)
\Edge[](v11)(v5)
\Edge[](v11)(v10)
\Edge[](v12)(v0)
\Edge[](v12)(v3)
\Edge[](v13)(v3)
\Edge[](v13)(v12)
\Edge[](v14)(v3)
\Edge[](v14)(v13)
\Edge[](v14)(v15)
\Edge[](v15)(v3)
\Edge[](v15)(v5)
\end{tikzpicture}}}
~~~~~~~

\subfloat[The $x \adj y$ and $w \nonadj z$ case.]{\makebox[.45\textwidth]{
\begin{tikzpicture}[scale = 10]
\tikzstyle{VertexStyle}=[shape = circle, minimum size = 6pt, inner sep = 1.2pt, draw]
\Vertex[x = 0.35, y = 0.55, L = \small {$2$}]{v0}
\Vertex[x = 0.35, y = 0.70, L = \small {$1,I_1,I_3$}]{v1}
\Vertex[x = 0.50, y = 0.65, L = \small {$2$}]{v2}
\Vertex[x = 0.50, y = 0.50, L = \small {$2,I_3$}]{v3}
\Vertex[x = 0.50, y = 0.80, L = \small {$2,I_2$}]{v4}
\Vertex[x = 0.65, y = 0.55, L = \small {$2,I_2$}]{v5}
\Vertex[x = 0.65, y = 0.70, L = \small {$3,I_1$}]{v6}
\Vertex[x = 0.20, y = 0.65, L = \small {$2,I_2$}]{v7}
\Vertex[x = 0.25, y = 0.85, L = \small {$1$}]{v8}
\Vertex[x = 0.65, y = 0.85, L = \small {$3,I_3$}]{v9}
\Vertex[x = 0.80, y = 0.80, L = \small {$3$}]{v10}
\Vertex[x = 0.80, y = 0.65, L = \small {$3,I_3$}]{v11}
\Vertex[x = 0.35, y = 0.40, L = \small {$2,I_2$}]{v12}
\Vertex[x = 0.45, y = 0.35, L = \small {$1,I_1$}]{v13}
\Vertex[x = 0.55, y = 0.35, L = \small {$1$}]{v14}
\Vertex[x = 0.65, y = 0.40, L = \small {$1,I_1$}]{v15}
\Edge[](v1)(v0)
\Edge[](v1)(v7)
\Edge[](v2)(v0)
\Edge[](v2)(v1)
\Edge[](v2)(v3)
\Edge[](v3)(v0)
\Edge[](v4)(v1)
\Edge[](v4)(v2)
\Edge[](v5)(v2)
\Edge[](v5)(v3)
\Edge[](v6)(v2)
\Edge[](v6)(v4)
\Edge[](v6)(v5)
\Edge[](v6)(v9)
\Edge[](v6)(v10)
\Edge[](v6)(v11)
\Edge[](v7)(v0)
\Edge[](v8)(v1)
\Edge[](v8)(v4)
\Edge[](v8)(v7)
\Edge[](v9)(v4)
\Edge[](v9)(v10)
\Edge[](v11)(v5)
\Edge[](v11)(v10)
\Edge[](v12)(v0)
\Edge[](v12)(v3)
\Edge[](v13)(v3)
\Edge[](v13)(v12)
\Edge[](v14)(v3)
\Edge[](v14)(v13)
\Edge[](v14)(v15)
\Edge[](v15)(v3)
\Edge[](v15)(v5)
\end{tikzpicture}}}
\subfloat[The $x \adj y$ and $w \adj z$ case.]{\makebox[.45\textwidth]{
\begin{tikzpicture}[scale = 10]
\tikzstyle{VertexStyle}=[shape = circle, minimum size = 6pt, inner sep = 1.2pt, draw]
\Vertex[x = 0.30, y = 0.60, L = \small {$2,I_2$}]{v0}
\Vertex[x = 0.30, y = 0.75, L = \small {$1,I_1,I_3$}]{v1}
\Vertex[x = 0.45, y = 0.70, L = \small {$2$}]{v2}
\Vertex[x = 0.45, y = 0.55, L = \small {$2,I_3$}]{v3}
\Vertex[x = 0.45, y = 0.85, L = \small {$2,I_2$}]{v4}
\Vertex[x = 0.60, y = 0.60, L = \small {$2,I_2$}]{v5}
\Vertex[x = 0.60, y = 0.75, L = \small {$3,I_1$}]{v6}
\Vertex[x = 0.15, y = 0.70, L = \small {$1$}]{v7}
\Vertex[x = 0.20, y = 0.90, L = \small {$1$}]{v8}
\Vertex[x = 0.60, y = 0.90, L = \small {$3,I_3$}]{v9}
\Vertex[x = 0.75, y = 0.85, L = \small {$3$}]{v10}
\Vertex[x = 0.75, y = 0.70, L = \small {$3,I_3$}]{v11}
\Vertex[x = 0.30, y = 0.45, L = \small {$1,I_1$}]{v12}
\Vertex[x = 0.40, y = 0.40, L = \small {$2$}]{v13}
\Vertex[x = 0.50, y = 0.40, L = \small {$2,I_2$}]{v14}
\Vertex[x = 0.60, y = 0.45, L = \small {$1,I_1$}]{v15}
\Edge[](v1)(v0)
\Edge[](v1)(v7)
\Edge[](v2)(v0)
\Edge[](v2)(v1)
\Edge[](v2)(v3)
\Edge[](v3)(v0)
\Edge[](v4)(v1)
\Edge[](v4)(v2)
\Edge[](v5)(v2)
\Edge[](v5)(v3)
\Edge[](v6)(v2)
\Edge[](v6)(v4)
\Edge[](v6)(v5)
\Edge[](v6)(v9)
\Edge[](v6)(v10)
\Edge[](v6)(v11)
\Edge[](v7)(v0)
\Edge[](v8)(v1)
\Edge[](v8)(v4)
\Edge[](v8)(v7)
\Edge[](v9)(v4)
\Edge[](v9)(v10)
\Edge[](v11)(v5)
\Edge[](v11)(v10)
\Edge[](v12)(v0)
\Edge[](v12)(v3)
\Edge[](v13)(v3)
\Edge[](v13)(v12)
\Edge[](v14)(v3)
\Edge[](v14)(v13)
\Edge[](v14)(v15)
\Edge[](v15)(v3)
\Edge[](v15)(v5)
\end{tikzpicture}}}
~~~~~~~

\subfloat[The case of one identified pair.]{\makebox[.45\textwidth]{
\begin{tikzpicture}[scale = 10]
\tikzstyle{VertexStyle}=[shape = circle, minimum size = 6pt, inner sep = 1.2pt, draw]
\Vertex[x = 0.45, y = 0.55, L = \small {}]{v0}
\Vertex[x = 0.45, y = 0.70, L = \small {$u_1$}]{v1}
\Vertex[x = 0.60, y = 0.65, L = \small {}]{v2}
\Vertex[x = 0.60, y = 0.50, L = \small {$u_2$}]{v3}
\Vertex[x = 0.60, y = 0.80, L = \small {$1$}]{v4}
\Vertex[x = 0.75, y = 0.55, L = \small {}]{v5}
\Vertex[x = 0.75, y = 0.70, L = \small {$u_3$}]{v6}
\Vertex[x = 0.30, y = 0.65, L = \small {$1$}]{v7}
\Vertex[x = 0.35, y = 0.85, L = \small {$Q$}]{v8}
\Vertex[x = 0.75, y = 0.85, L = \small {$Q$}]{v9}
\Vertex[x = 0.90, y = 0.80, L = \small {$Q$}]{v10}
\Vertex[x = 0.90, y = 0.65, L = \small {$1$}]{v11}
\Vertex[x = 0.45, y = 0.40, L = \small {$2a$}]{v12}
\Vertex[x = 0.55, y = 0.35, L = \small {$2b$}]{v13}
\Vertex[x = 0.65, y = 0.35, L = \small {$2a$}]{v14}
\Vertex[x = 0.75, y = 0.40, L = \small {$2b$}]{v15}
\Edge[](v1)(v0)
\Edge[](v1)(v7)
\Edge[](v2)(v0)
\Edge[](v2)(v1)
\Edge[](v2)(v3)
\Edge[](v3)(v0)
\Edge[](v4)(v1)
\Edge[](v4)(v2)
\Edge[](v5)(v2)
\Edge[](v5)(v3)
\Edge[](v6)(v2)
\Edge[](v6)(v4)
\Edge[](v6)(v5)
\Edge[](v6)(v9)
\Edge[](v6)(v10)
\Edge[](v6)(v11)
\Edge[](v7)(v0)
\Edge[](v8)(v1)
\Edge[](v8)(v4)
\Edge[](v8)(v7)
\Edge[](v9)(v4)
\Edge[](v9)(v10)
\Edge[](v11)(v5)
\Edge[](v11)(v10)
\Edge[](v12)(v0)
\Edge[](v12)(v3)
\Edge[](v13)(v3)
\Edge[](v13)(v12)
\Edge[](v14)(v3)
\Edge[](v14)(v13)
\Edge[](v14)(v15)
\Edge[](v15)(v3)
\Edge[](v15)(v5)
\end{tikzpicture}}}
\caption{The case of Lemma \ref{alpha6567}.}
\label{fig6567}
\end{center}
\end{figure}
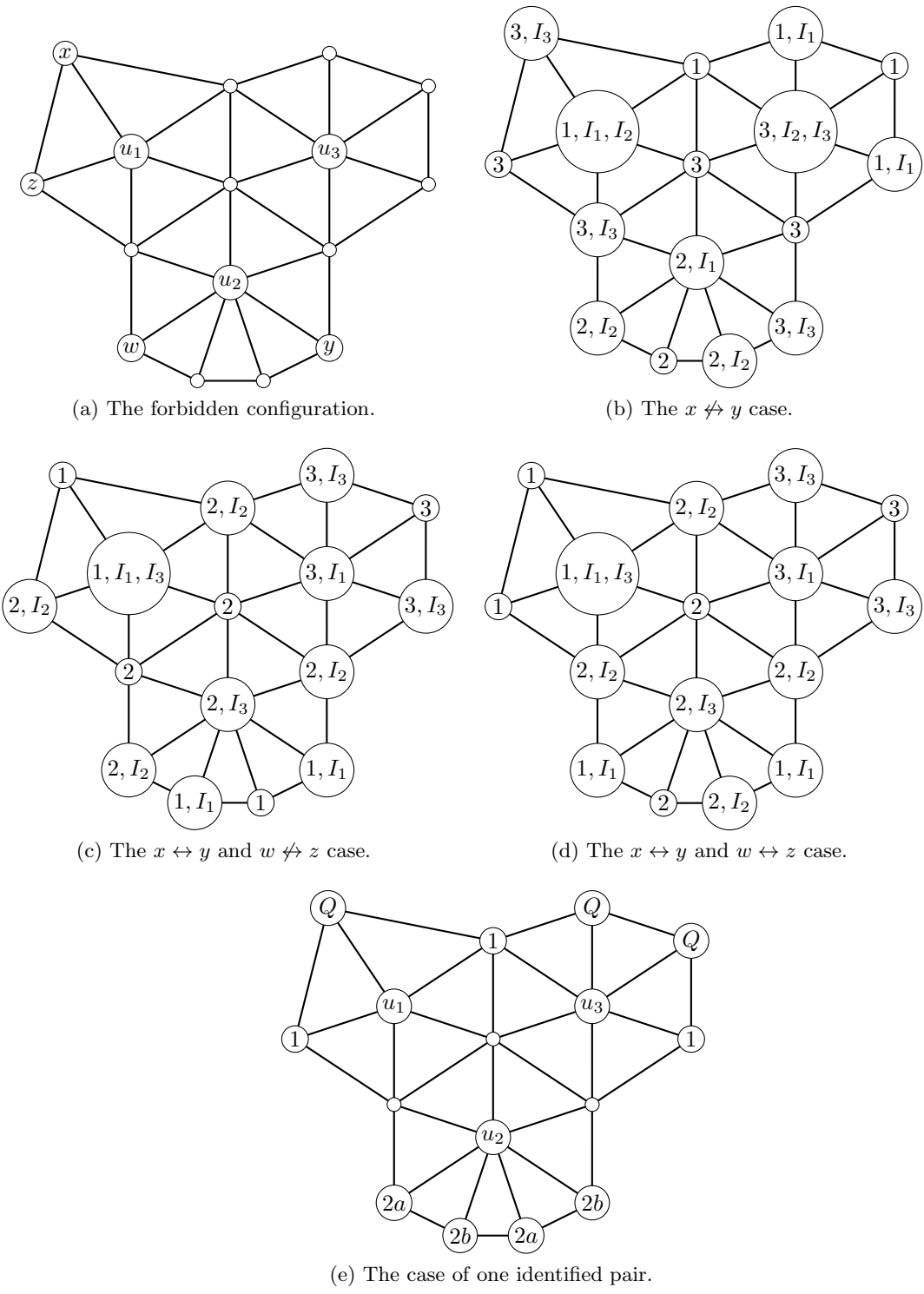
\begin{lem}\label{alpha6567}
Every minimal
$G$ has no 6-vertex $v$ with pairwise nonadjacent neighbors $u_1$, $u_2$, and
$u_3$, where $d(u_1)=5$, $d(u_2)\le 6$, and $d(u_3)=7$.
\end{lem}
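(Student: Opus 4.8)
The plan is to sandwich $\card{N(J)}$ (with $J=\set{u_1,u_2,u_3}$) between the general lower bound and an embedding-based upper bound, and then contradict each surviving value using the reducibility lemmas above. First I would apply Lemma~\ref{CrunchToOneVertexNeighborhoodSize} to $J$ to get $\card{N(J)}\ge 12$. Since $u_1,u_2,u_3$ are pairwise nonadjacent neighbors of the $6$-vertex $v$, Lemma~\ref{BasicPlanarFacts}(a) makes $G[N(v)]$ a $6$-cycle on which $u_1,u_2,u_3$ alternate with three further neighbors $a,b,c$ of $v$, where $a\in N(u_1)\cap N(u_2)$, $b\in N(u_2)\cap N(u_3)$, $c\in N(u_3)\cap N(u_1)$, and $v,a,b,c$ are distinct. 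Because $u_1$ is a $5$-vertex nonadjacent to $u_2$ and they already share $v$ and $a$, Lemma~\ref{alpha55} forces $d(u_2)=6$. Since $v\adj a$, $v\adj b$, $v\adj c$, Lemma~\ref{BasicPlanarFacts}(c) puts $v$ in a $K_2$-component of each graph $G[N(u_i)\cap N(u_j)]$; hence $v$ is the only common neighbor of all three $u_i$, and inclusion--exclusion gives $\card{N(J)}\le d(u_1)+d(u_2)+d(u_3)-5=13$, with equality exactly when $\card{N(u_i)\cap N(u_j)}=2$ for every pair. So $\card{N(J)}\in\set{12,13}$.

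The key structural point is that $u_1$ is useless as a ``private $2$-set'' vertex: on the induced $5$-cycle $N(u_1)$ the vertex $v$ is adjacent to both $a$ and $c$, so the two remaining neighbors of $u_1$ are consecutive on that cycle and hence adjacent, and by Lemma~\ref{BasicPlanarFacts}(c) neither of them lies in $N(u_2)\cup N(u_3)$; thus $N(u_1)\setminus(N(u_2)\cup N(u_3))$ contains no independent $2$-set. Consequently every application of Lemma~\ref{ind-red}, \ref{JIsThree}, \ref{JIsThreeWithSeven}, or \ref{JIsThreeFourteen} must draw its independent $2$-sets out of $N(u_2)$ (a $6$-cycle) or $N(u_3)$ (a $7$-cycle). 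Deleting the consecutive ``special'' vertices $\set{v,a,b}$ from $N(u_2)$, or $\set{v,b,c}$ from $N(u_3)$, leaves a path on at least $3$ vertices, which contains an independent $2$-set; so $u_3$ always, and $u_2$ whenever $\card{N(J)}=13$, has a private independent $2$-set.

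When $\card{N(J)}=12$, exactly one pairwise intersection has size $3$. In the generic case, where $u_2$ still has a private independent $2$-set, I would apply Lemma~\ref{ind-red} with $k=1$ to $u_2$ and $u_3$: condition (1) holds (with equality, since $\frac3{13}(12+1)=3$) and condition (2) is given by the two private $2$-sets, a contradiction. Equivalently, applying Lemma~\ref{JIsThreeWithSeven} with our $u_2$ as the lemma's ``private-$2$-set'' vertex and our $u_3,u_1$ as the two vertices whose neighborhoods are combined (the $7^+$-vertex-and-neighborhood hypothesis is supplied by $\set{u_3}\cup N(u_3)$, the size bound by Lemma~\ref{JIsTwo}) yields $\card{N(J)}\ge 13$, again a contradiction. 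In the residual case the extra common neighbor $a'$ of $u_2$ sits at an endpoint of its path; here one first uses the $5$-cycle $N(u_1)$ together with Lemma~\ref{BasicPlanarFacts}(c) to see that $a'$ is adjacent to $u_1,u_2,b,c$ but not to $v$ or $a$, so $d(a')\ge 6$, and then finishes via Lemma~\ref{JIsThree} (using Lemma~\ref{BigAlphaWithSevenPlus} for hypothesis (2)) with $u_3$ in the private-$2$-set role, once the needed containment is checked.

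The case $\card{N(J)}=13$ is the main obstacle. Here $N(u_2)\setminus(N(u_1)\cup N(u_3))$ is a path $r_1r_2r_3$ and $N(u_3)\setminus(N(u_1)\cup N(u_2))$ is a path $s_1s_2s_3s_4$, so $M_1=\set{r_1,r_3}$ and $M_2=\set{s_1,s_3}$ are independent $2$-sets, and since $u_1$ is barred from the private role it is forced into the distinguished slot of Lemma~\ref{JIsThreeFourteen}: apply that lemma with the roles of its $u_1,u_2,u_3$ played by our $u_2,u_3,u_1$ respectively. Hypothesis (1) is exactly $M_1,M_2$; hypothesis (2) requires $\alpha(G[J\cup V(H)])\ge 4$, where $H$ is the component of $u_1$ in $G[\set{u_1}\cup N(J)]\setminus(M_1\cup M_2)$. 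As $M_1\cup M_2$ avoids $N(u_1)$, $V(H)$ contains $u_1$, all of $N(u_1)$, and (through $v$) the vertex $b$; I expect the needed independent $4$-set to be $\set{a,b,c}$ plus one further vertex of $V(H)$, or $\set{u_1,u_2}$ plus two suitable vertices of $N(u_3)$. Establishing that such a $4$-set always exists is the delicate step, and, exactly as in Figure~\ref{fig6567}, it splits into subcases according to whether the outer neighbor of $u_1$ on the $c$-side is adjacent to that of $u_2$ on the $a$-side, whether the symmetric pair is adjacent, and whether some such pair is identified; in the identification subcase two vertices sharing a common neighbor must be nonadjacent (no separating $3$-cycle, Lemma~\ref{alphaNoTriangles}), which supplies the missing independence and lets us finish through Lemma~\ref{JIsThree}. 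In every subcase one obtains $\card{N(J)}\ge 14$, contradicting $\card{N(J)}=13$. The hard part of the whole argument is precisely this last verification: the natural ``$\card{N(J)}\ge 13$'' reductions only match, rather than beat, the upper bound, and the $5$-vertex $u_1$ cannot supply a private independent $2$-set, so one is pushed into a ``$\ge 14$'' reduction whose independence hypothesis must be checked against the planar embedding through several cases.
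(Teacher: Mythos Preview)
Your setup is correct: $\card{N(J)}\in\set{12,13}$, $d(u_2)=6$, and the observation that $N(u_1)\setminus(N(u_2)\cup N(u_3))$ is an edge $pq$ (so $u_1$ can never serve as the ``private 2-set'' vertex) is exactly the structural obstacle.  But the main $\card{N(J)}=13$ step has a real gap.

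You propose to apply Lemma~\ref{JIsThreeFourteen} with our $u_1$ in the distinguished role, which requires $\alpha(G[J\cup V(H)])\ge 4$.  The part of $J\cup V(H)$ that you can guarantee from the hypotheses alone is the nine-vertex set $T=\set{u_1,u_2,u_3,v,a,b,c,p,q}$.  Unfortunately $G[T]$ need only have $\alpha(G[T])=3$: every vertex of $T$ is adjacent to at least one of $a,b,c$, so $\set{a,b,c}$ cannot be extended; and one checks directly that no other triple in $T$ extends either (for instance $\set{u_2,u_3,p}$, $\set{q,a,u_3}$, $\set{p,c,u_2}$ are all maximal of size~3 in $G[T]$).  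So ``$\set{a,b,c}$ plus one further vertex of $V(H)$'' cannot work using only the nine forced vertices, and your alternative ``$\set{u_1,u_2}$ plus two vertices of $N(u_3)$'' needs at least two extra vertices of $N(u_3)$ to land in $V(H)$ with controlled adjacencies --- something that is not forced by the hypotheses and that you do not verify.  Whether $r_2$, $s_2$, $s_4$ enter $V(H)$, and whether they are nonadjacent to the right things, depends on edges of $G$ that are simply undetermined.

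The paper avoids this trap by using Lemma~\ref{ThreeHelperFourteen} (a three-part decomposition) rather than Lemma~\ref{JIsThreeFourteen}.  It splits the $\card{N(J)}=13$ case into three subcases according to whether the outer neighbours $x,y$ of $u_1$ (and then $w,z$) are adjacent, and in each subcase it chooses a \emph{different} partition $S_1,S_2,S_3$ of $J\cup N(J)$ and exhibits explicit independent $4$-sets $I_1,I_2,I_3$ tailored to that subcase.  When all three $S_i$ are pairwise adjacent, Lemma~\ref{ThreeHelperFourteen} gives the contradiction; when some $S_i\nonadj S_j$, the paper exhibits an independent $5$-set in $S_i\cup S_j\cup J$ and appeals to Lemma~\ref{uber-red} directly.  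For the $\card{N(J)}=12$ case the paper uses Lemma~\ref{JIsThree} after determining (via Lemma~\ref{JIsTwo} and the no-separating-$3$-cycle property) which pair is identified.  The point is that the three-part decomposition gives enough flexibility to place the needed independent sets explicitly, whereas in Lemma~\ref{JIsThreeFourteen} the component $H$ is determined by $M_1,M_2$ and you lose control over exactly which vertices it contains.
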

\begin{proof}
Let $J = \set{u_1, u_2, u_3}$. By Lemma
\ref{CrunchToOneVertexNeighborhoodSize}, $12 \le |N(J)|\le 5+6+7-5=13$, so at most one pair
of vertices in Figure~\ref{fig6567}(a) are identified.

First, suppose the vertices in the figure are distinct as drawn.  Suppose $x
\nonadj y$, as in Figure~\ref{fig6567}(b).  
For each $i \in \set{1,2,3}$, let $S_i$ consist of the vertices labeled $i$. 
Now for each $i \in \set{1,2,3}$, $G[S_i]$ is connected.  Clearly, for each $i
\in \set{1,2}$ the vertices labeled $I_i$ form an independent $4$-set.  Since
$x \nonadj y$, the vertices labeled $I_3$ also form an independent $4$-set.  Note
that $S_1 \adj S_3$ and $S_2 \adj S_3$; however, possibly $S_1 \nonadj S_2$. 
If $S_1 \adj S_2$, then we can apply Lemma~\ref{ThreeHelperFourteen} to get a
contradiction.  So, we assume that $S_1\nonadj S_2$.  But now we have an
independent $5$-set consisting of $u_1$, the two vertices labeled $\set{1,
I_1}$ and the two vertices labeled $\set{2,I_2}$; hence $\alpha(G[S_1 \cup S_2
\cup J]) \ge 5$.  So, we can apply Lemma~\ref{uber-red} to get a contradiction.
So, instead we assume $x \adj y$.  

Suppose $w \nonadj z$, as in Figure~\ref{fig6567}(c).  
For each $i \in \set{1,2,3}$, let $S_i$ consist of the vertices labeled $i$ .
Clearly $G[S_i]$ is connected for each $i \in \set{2,3}$.  Also, $G[S_1]$ is
connected because $x \adj y$.  Note that for each $i \in \set{1,3}$, the 
vertices labeled $I_i$ form an independent $4$-set.
Since $x \adj y$ and $w \nonadj z$, the vertices labeled $I_2$ also form an
independent $4$-set.  Note that $S_1 \adj S_2$ and $S_2
\adj S_3$; however, possibly $S_1 \nonadj S_3$.  If $S_1 \adj S_3$, then we
apply Lemma~\ref{ThreeHelperFourteen} to get a contradiction.  So instead we 
assume that $S_1\nonadj S_3$.  But now we again have an independent 5-set,
consisting of $u_1$, the two vertices labeled $\set{1, I_1}$, and the two vertices
labeled $\set{3,I_3}$; hence $\alpha(G[S_1 \cup S_3 \cup J]) \ge 5$.  So, again we
apply Lemma~\ref{uber-red} to get a contradiction.
Thus, we instead assume $w \adj z$.

Now consider Figure~\ref{fig6567}(d).  
For each $i \in \set{1,2,3}$, let $S_i$ consist of the vertices labeled $i$.
Note that $G[S_i]$ is connected for each $i \in \set{2,3}$. 
Also, $G[S_1]$ is connected because $x \adj y$ and $w \adj z$. Clearly, the
vertices labeled $I_i$ form an independent $4$-set for each $i \in \set{1,3}$. 
Since $x \adj y$, the vertices labeled $I_2$ also form an independent $4$-set. 
Note that $S_1 \adj S_2$ and $S_2 \adj S_3$; however, possibly
$S_1 \nonadj S_3$.  If $S_1 \adj S_3$, then we apply Lemma
\ref{ThreeHelperFourteen} to get a contradiction.  So, instead we assume that
$S_1\nonadj S_3$.  But now we have an independent $5$-set, consisting of $u_1$,
the two vertices labeled $\set{1, I_1}$, and the two vertices labeled
$\set{3,I_3}$; hence $\alpha(G[S_1 \cup S_3 \cup J]) \ge 5$.  So, we
apply Lemma~\ref{uber-red} to get a contradiction.

Hence, we may assume that exactly one pair of vertices in
Figure~\ref{fig6567}(a) is identified. No neighbor of $u_1$ can be identified
with a neighbor of $u_3$, since then $u_1$ and $u_3$ would have three common
neighbors, violating Lemma~\ref{JIsTwo}.  Hence, to avoid separating 3-cycles,
we assume that a vertex labeled $2a$ is identified with a vertex labeled $Q$
(the case where a vertex labeled $2b$ is identified with a vertex lableled $Q$
is nearly identical, so we omit the details).  But now the rightmost vertex
labeled $1$ and the leftmost vertex labeled $1$ are on opposite sides of a
separating cycle and hence nonadjacent.  Therefore, $u_2$ together with the
vertices labeled $1$ is an independent $4$-set.  So, now we apply
Lemma~\ref{JIsThree} to get a contradiction, using the vertices labeled $2b$
for the independent $2$-set.
\end{proof}

\begin{figure}
\begin{center}
\subfloat[Here $u_2$ and $u_3$ have a common neighbor in
$N(u_1)$.]{\makebox[.42\textwidth]{
\begin{tikzpicture}[scale = 9]
\tikzstyle{VertexStyle}=[shape = circle, minimum size = 6pt, inner sep = 1.2pt, draw]
\Vertex[x = 1.05, y = 0.65, L = \small {$u_3$}]{v0}
\Vertex[x = 0.90, y = 0.75, L = \small {}]{v1}
\Vertex[x = 0.90, y = 0.55, L = \small {}]{v2}
\Vertex[x = 0.85, y = 0.40, L = \small {$u_2$}]{v3}
\Vertex[x = 0.60, y = 0.75, L = \small {}]{v4}
\Vertex[x = 0.60, y = 0.55, L = \small {$1$}]{v5}
\Vertex[x = 0.75, y = 0.65, L = \small {$u_1$}]{v6}
\Vertex[x = 0.70, y = 0.35, L = \small {$2$}]{v7}
\Vertex[x = 1.00, y = 0.40, L = \small {$2$}]{v8}
\Vertex[x = 0.90, y = 0.25, L = \small {}]{v9}
\Vertex[x = 1.05, y = 0.80, L = \small {$3$}]{v10}
\Vertex[x = 1.20, y = 0.75, L = \small {$3b$}]{v11}
\Vertex[x = 1.20, y = 0.55, L = \small {$3$}]{v12}
\Vertex[x = 1.05, y = 0.5, L = \small {$3b$}]{v13}
\Vertex[x = 0.75, y = 0.5, L = \small {}]{v14}
\Vertex[x = 0.75, y = 0.80, L = \small {$1$}]{v15}
\Edge[](v1)(v0)
\Edge[](v2)(v0)
\Edge[](v2)(v1)
\Edge[](v3)(v14)
\Edge[](v5)(v4)
\Edge[](v6)(v1)
\Edge[](v6)(v2)
\Edge[](v6)(v4)
\Edge[](v6)(v5)
\Edge[](v7)(v3)
\Edge[](v7)(v14)
\Edge[](v8)(v3)
\Edge[](v9)(v3)
\Edge[](v9)(v7)
\Edge[](v9)(v8)
\Edge[](v10)(v0)
\Edge[](v10)(v1)
\Edge[](v11)(v0)
\Edge[](v11)(v10)
\Edge[](v12)(v0)
\Edge[](v12)(v11)
\Edge[](v13)(v0)
\Edge[](v13)(v2)
\Edge[](v13)(v12)
\Edge[](v14)(v2)
\Edge[](v14)(v5)
\Edge[](v14)(v6)
\Edge[](v15)(v1)
\Edge[](v15)(v4)
\Edge[](v15)(v6)
\Edge[](v8)(v2)
\Edge[](v3)(v2)
\end{tikzpicture}}}

\subfloat[Here $u_2$ and $u_3$ have adjacent neighbors in
$N(u_1)$.]{\makebox[.42\textwidth]{
\begin{tikzpicture}[scale = 9]
\tikzstyle{VertexStyle}=[shape = circle, minimum size = 6pt, inner sep = 1.2pt, draw]
\Vertex[x = 1.05, y = 0.65, L = \small {$u_3$}]{v0}
\Vertex[x = 0.90, y = 0.75, L = \small {}]{v1}
\Vertex[x = 0.90, y = 0.55, L = \small {$1a, 1b$}]{v2}
\Vertex[x = 0.60, y = 0.40, L = \small {$u_2$}]{v3}
\Vertex[x = 0.60, y = 0.75, L = \small {$1b, 5a$}]{v4}
\Vertex[x = 0.60, y = 0.55, L = \small {}]{v5}
\Vertex[x = 0.75, y = 0.65, L = \small {$u_1$}]{v6}
\Vertex[x = 0.70, y = 0.30, L = \small {$2$}]{v7}
\Vertex[x = 0.45, y = 0.50, L = \small {$2, 5a, 5b$}]{v8}
\Vertex[x = 0.50, y = 0.30, L = \small {}]{v9}
\Vertex[x = 1.05, y = 0.80, L = \small {$1a, 1b, 4$}]{v10}
\Vertex[x = 1.20, y = 0.75, L = \small {$3$}]{v11}
\Vertex[x = 1.20, y = 0.55, L = \small {$4$}]{v12}
\Vertex[x = 1.05, y = 0.50, L = \small {$3$}]{v13}
\Vertex[x = 0.75, y = 0.50, L = \small {$5a, 5b$}]{v14}
\Vertex[x = 0.75, y = 0.80, L = \small {$1a, 5b$}]{v15}
\Edge[](v1)(v0)
\Edge[](v2)(v0)
\Edge[](v2)(v1)
\Edge[](v3)(v14)
\Edge[](v5)(v3)
\Edge[](v5)(v4)
\Edge[](v6)(v1)
\Edge[](v6)(v2)
\Edge[](v6)(v4)
\Edge[](v6)(v5)
\Edge[](v7)(v3)
\Edge[](v7)(v14)
\Edge[](v8)(v3)
\Edge[](v8)(v5)
\Edge[](v9)(v3)
\Edge[](v9)(v7)
\Edge[](v9)(v8)
\Edge[](v10)(v0)
\Edge[](v10)(v1)
\Edge[](v11)(v0)
\Edge[](v11)(v10)
\Edge[](v12)(v0)
\Edge[](v12)(v11)
\Edge[](v13)(v0)
\Edge[](v13)(v2)
\Edge[](v13)(v12)
\Edge[](v14)(v2)
\Edge[](v14)(v5)
\Edge[](v14)(v6)
\Edge[](v15)(v1)
\Edge[](v15)(v4)
\Edge[](v15)(v6)
\end{tikzpicture}}}

\subfloat[Here $u_2$ and $u_3$ have neighbors at
distance 2\ in $N(u_1)$.]{\makebox[.5\textwidth]{
\begin{tikzpicture}[scale = 9]
\tikzstyle{VertexStyle}=[shape = circle, minimum size = 6pt, inner sep = 1.2pt, draw]
\Vertex[x = 0.90, y = 0.65, L = \small {$u_3$}]{v0}
\Vertex[x = 0.75, y = 0.75, L = \small {}]{v1}
\Vertex[x = 0.75, y = 0.55, L = \small {}]{v2}
\Vertex[x = 0.30, y = 0.65, L = \small {$u_2$}]{v3}
\Vertex[x = 0.45, y = 0.75, L = \small {}]{v4}
\Vertex[x = 0.45, y = 0.55, L = \small {}]{v5}
\Vertex[x = 0.60, y = 0.65, L = \small {$u_1$}]{v6}
\Vertex[x = 0.30, y = 0.80, L = \small {$2$}]{v7}
\Vertex[x = 0.30, y = 0.50, L = \small {$2$}]{v8}
\Vertex[x = 0.15, y = 0.65, L = \small {}]{v9}
\Vertex[x = 0.90, y = 0.80, L = \small {$3$}]{v10}
\Vertex[x = 1.05, y = 0.75, L = \small {$3b$}]{v11}
\Vertex[x = 1.05, y = 0.55, L = \small {$3$}]{v12}
\Vertex[x = 0.90, y = 0.50, L = \small {$3b$}]{v13}
\Vertex[x = 0.60, y = 0.50, L = \small {$1$}]{v14}
\Vertex[x = 0.60, y = 0.80, L = \small {$1$}]{v15}
\Edge[](v1)(v0)
\Edge[](v2)(v0)
\Edge[](v2)(v1)
\Edge[](v4)(v3)
\Edge[](v5)(v3)
\Edge[](v5)(v4)
\Edge[](v6)(v1)
\Edge[](v6)(v2)
\Edge[](v6)(v4)
\Edge[](v6)(v5)
\Edge[](v7)(v3)
\Edge[](v7)(v4)
\Edge[](v8)(v5)
\Edge[](v8)(v3)
\Edge[](v9)(v7)
\Edge[](v9)(v8)
\Edge[](v9)(v3)
\Edge[](v10)(v1)
\Edge[](v10)(v0)
\Edge[](v11)(v10)
\Edge[](v11)(v0)
\Edge[](v12)(v11)
\Edge[](v12)(v0)
\Edge[](v13)(v12)
\Edge[](v13)(v0)
\Edge[](v13)(v2)
\Edge[](v15)(v4)
\Edge[](v15)(v6)
\Edge[](v15)(v1)
\Edge[](v14)(v5)
\Edge[](v14)(v2)
\Edge[](v14)(v6)
\end{tikzpicture}}}

\caption{
The cases of Lemma~\ref{alpha665big}. The three possibilities for an
independent 3-set $\set{u_1,u_2,u_3}$ where $d(u_1)=6$, $d(u_2)\le 6$, $d(u_3)=5$, and
each of $u_2$ and $u_3$ has two neighbors in common with $u_1$.
\label{figalpha665big}
}
\end{center}
\end{figure}
\begin{lem}
Let $u_1$ be a $6$-vertex with nonadjacent vertices $u_2$ and $u_3$ each at
distance two from $u_1$, where $u_2$ is a $5$-vertex and $u_3$ is a
$6^-$-vertex. A minimal $G$ cannot have $u_1$ and $u_2$ with two common
neighbors, and also $u_1$ and $u_3$ with two common neighbors.
\label{alpha665big}
\end{lem}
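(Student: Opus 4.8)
The plan is to set $J=\set{u_1,u_2,u_3}$; since $u_1$ is at distance two from each of $u_2,u_3$ and $u_2\nonadj u_3$, the set $J$ is independent. By Lemma~\ref{CrunchToOneVertexNeighborhoodSize}, $\card{N(J)}\ge 12$; on the other hand, counting the two neighbors $u_1$ shares with $u_2$ and the two it shares with $u_3$ gives $\card{N(J)}\le d(u_1)+d(u_2)+d(u_3)-4=d(u_3)+7\le 13$. So $\card{N(J)}\in\set{12,13}$, and the whole proof is a case analysis contradicting this: whenever $\card{N(J)}=12$ I will squeeze out $\card{N(J)}\ge 13$ via Lemma~\ref{JIsThree}, and whenever $\card{N(J)}=13$ I will squeeze out $\card{N(J)}\ge 14$ via Lemma~\ref{JIsThreeFourteen} (or Lemma~\ref{ThreeHelperFourteen}).

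First I would fix the embedding. By Lemma~\ref{BasicPlanarFacts}(a), $N(u_1)$ induces a $6$-cycle $C$, and by Lemma~\ref{BasicPlanarFacts}(c) together with the absence of separating $3$-cycles (Lemma~\ref{alphaNoTriangles}) the two common neighbors $u_1$ shares with each of $u_2,u_3$ form an edge of $C$ (a non-edge immediately hands Lemma~\ref{ind-red} a private independent pair, closing that subcase). Up to symmetry these two edges of $C$ are positioned as in Figure~\ref{figalpha665big}: (a) sharing a vertex, (b) disjoint and joined by an edge of $C$, or (c) disjoint and separated by one vertex of $C$ on each side. A second counting step pins down $\card{N(u_2)\cap N(u_3)}$: if $d(u_3)=5$ then $\card{N(J)}=12$ in all three cases, while if $d(u_3)=6$ then $\card{N(J)}=12$ exactly when $u_2$ and $u_3$ have a common neighbor off $C$ (equivalently, a pair of vertices in Figure~\ref{figalpha665big} is identified), and $\card{N(J)}=13$ otherwise.

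In configurations (a) and (c), $N(u_1)\setminus(N(u_2)\cup N(u_3))$ contains an independent $2$-set (three consecutive vertices of $C$ in (a); the two separated leftover vertices in (c)), so $u_1$ can serve as the distinguished vertex: pairing that $2$-set with an independent $2$-set among the off-$C$ neighbors of $u_2$, together with $\set{u_1,u_3}$, shows $\alpha(G[J\cup N(u_2)\cup N(u_3)])\ge 4$, and Lemma~\ref{JIsThree} (when $\card{N(J)}=12$) gives the contradiction $\card{N(J)}\ge 13$; when $\card{N(J)}=13$ the same pair of independent $2$-sets plays the roles of $M_1\subseteq N(u_1)\setminus(N(u_2)\cup N(u_3))$ and $M_2\subseteq N(u_2)\setminus(N(u_1)\cup N(u_3))$ in Lemma~\ref{JIsThreeFourteen}, whose remaining hypothesis is verified by an explicit independent $4$-set inside $u_3$'s component (e.g.\ $\set{u_1,u_2}$ plus an independent $2$-set among the neighbors of $u_3$ that lie off $C$ and outside $N(u_2)$), yielding $\card{N(J)}\ge 14$. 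In configuration (b) the private neighbors of $u_1$ form an edge of $C$, so I would instead take $u_2$ (or, in the $\card{N(J)}=13$ subcase, the pair $u_2,u_3$) as the distinguished vertices, using independent $2$-sets among their off-$C$ neighbors, and feed $u_1$ back in through Lemma~\ref{JIsThree} or Lemma~\ref{JIsThreeFourteen}/\ref{ThreeHelperFourteen} with $u_1$ playing the passive role.

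The step I expect to be the main obstacle is exactly configuration (b) with $\card{N(J)}=13$: here the three ``pieces'' one wants to feed into Lemma~\ref{ThreeHelperFourteen} (or the component one needs in Lemma~\ref{JIsThreeFourteen}) can fail to have independence number $4$ if chosen naively, because after deleting the two independent $2$-sets the component of $u_1$ may shrink to just $\set{u_1}\cup C$, whose independence number together with $\set{u_2,u_3}$ is only $3$. Resolving this requires choosing the independent $2$-sets so that a neighbor of $u_3$ adjacent to $C$ survives in that component, or, failing that, invoking the $7^+$-vertex shortcuts (Lemmas~\ref{JIsThreeWithSeven},~\ref{JIsThreeFourteenWithSeven}) when a nearby vertex is forced to have large degree, or applying Lemma~\ref{uber-red} directly to a hand-picked partition. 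The identified-pair subcases ($d(u_3)=6$, $\card{N(J)}=12$) are similar but easier, since the extra common neighbor $s$ of $u_2$ and $u_3$ only needs to be routed around and Lemma~\ref{ind-red} with $k=1$ is usually available. None of this is deep, but it is where essentially all of the bookkeeping — and the many auxiliary labels in Figure~\ref{figalpha665big} — lives.
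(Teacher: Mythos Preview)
Your plan is correct and tracks the paper's proof closely: same independent set $J=\{u_1,u_2,u_3\}$, same bound $|N(J)|\in\{12,13\}$, same three-way split of configurations according to how the two edges of $C$ sit, and the same recognition that configuration (b) is where the real work is. One economy you miss: in configurations (a) and (c) with all vertices distinct, \emph{each} of $u_1,u_2,u_3$ has a private independent $2$-set (in (a), $u_1$'s three leftover consecutive vertices on $C$ give one, and the three off-$C$ neighbors of each of $u_2,u_3$ give the others; in (c), similarly), so Lemma~\ref{ind-red} with $k=0$ dispatches both at once---no need to route through Lemma~\ref{JIsThreeFourteen} or to build the independent $4$-set you describe. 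Likewise, in the identified-pair subcases of (a) and (c), the paper just swaps to the backup pair labeled $3b$ and re-applies Lemma~\ref{ind-red} with $k=1$. Your heavier machinery would work, but it is more than is needed there. For configuration (b) your diagnosis is exactly right: the paper uses Lemma~\ref{JIsThreeFourteen} with $u_2,u_3$ distinguished, and the independent $4$-set in $u_1$'s component is produced by one of two explicit candidates (the labels $1a$ and $1b$ in the figure), at least one of which must be independent since otherwise a separating $3$-cycle appears; in the identified-pair subcase it drops back to Lemma~\ref{JIsThree} with similarly hand-picked sets.
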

\begin{proof}
Figure~\ref{figalpha665big} shows the possible arrangements when $u_3$ is a
6-vertex. The case when $u_3$ is a 5-vertex is similar, but
easier.  In particular, when $u_3$ is a 5-vertex, we already know
$|N(\set{u_1,u_2,u_3})|\le 12$, so all vertices in the corresponding figures
must be distinct as drawn.  Furthermore, it now suffices to apply
Lemma~\ref{ind-red} with $k=1$.  We omit further details.  So suppose instead
that $d(u_3)=6$.

First, suppose all vertices in the figures are distinct as drawn.  Now
Figures~\ref{figalpha665big}(a,c) are impossible by Lemma~\ref{ind-red} with
$k=0$; for each $i \in \set{1,2,3}$, we use the vertices labeled $i$ as the
independent $2$-set for $u_i$.  For Figure~\ref{figalpha665big}(b), let $I_1$
be the vertices labeled $u_2$ or $1a$ and let $I_2$ be the vertices labeled
$u_2$ or $1b$.  To avoid a separating 3-cycle, at least one of $I_1$ or $I_2$
is independent.  Hence Figure~\ref{figalpha665big}(b) is impossible by Lemma
\ref{JIsThreeFourteen}; for the independent 4-set, use $I_1$ or $I_2$ and
for each $i\in\set{2,3}$, use the vertices labeled
$i$ as the independent 2-set for $u_i$.

By Lemma~\ref{CrunchToOneVertexNeighborhoodSize}, $|N(J)| \ge 12$, so exactly
one pair of vertices is 
identified in one of Figures~\ref{figalpha665big}(a,b,c).
First, consider Figures~\ref{figalpha665big}(a,c) simultaneously. 
Since $G$ has no separating 3-cycle, the identified pair must contain a vertex
labeled 3.  Now we apply Lemma~\ref{ind-red} with $k=1$, using the vertices
labeled $3b$ in place of those labeled $3$.

Finally, for Figure~\ref{figalpha665big}(b),  we apply Lemma~\ref{JIsThree}. 
For the independent 2-set we use either the vertices labeled 3 or the
vertices labeled 4; at least one of these pairs contains no identified vertex. 
For the independent 4-set, we use either $u_3$ and the vertices labeled $5a$
or else $u_3$ and the vertices labeled $5b$.  Since $G$ has no separating
3-cycle, at least one of these 4-sets will be independent.
\end{proof}

\begin{lem}
Every minimal $G$ has no 7-vertex $v$ with a 5-neighbor and two other $6^-$-neighbors,
$u_1$, $u_2$, and $u_3$, that are pairwise nonadjacent.
In other words, Figures~\ref{fig7566}(a--e) are forbidden.
\label{alpha7566}
\end{lem}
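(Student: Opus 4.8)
The plan is to follow the template of Lemmas~\ref{alpha666} and~\ref{alpha6567}. Put $J=\set{u_1,u_2,u_3}$, with $u_1$ a $5$-neighbor of $v$; Lemma~\ref{CrunchToOneVertexNeighborhoodSize} gives $\card{N(J)}\ge 12$, while the degrees give the upper bound $\card{N(J)}\le d(u_1)+d(u_2)+d(u_3)-4\le 13$ (two of the pairs $u_iu_j$ already have two common neighbors; see below). So the vertices of Figure~\ref{fig7566} are distinct as drawn except for possibly one identified pair, exactly as in Lemma~\ref{alpha6567}. The one feature special to this lemma, and the one I would exploit, is that $v$ is a $7$-vertex lying in $N(J)$ with $\set{v}\cup N(v)\subseteq J\cup N(J)$: each neighbor of $v$ other than $u_1,u_2,u_3$ is a common neighbor of two of the $u_i$ (or a neighbor of some $u_i$ inside the long gap), so $\card{J\cup N(J)}=3+\card{N(J)}\ge 15\ge 10$ and Lemma~\ref{BigAlphaWithSevenPlus} becomes available. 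I therefore expect the ``$\ldots$WithSeven'' versions of the three-vertex lemmas (Lemmas~\ref{JIsThreeWithSeven} and~\ref{JIsThreeFourteenWithSeven}) to carry much of the argument.

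Before the case analysis I would record the local structure. Since $G[N(v)]$ is a $7$-cycle (Lemma~\ref{BasicPlanarFacts}(a)) and $u_1,u_2,u_3$ are pairwise nonadjacent, their three gaps along this cycle are $2,2,3$ in some order; hence two of the pairs $u_iu_j$ have a common neighbor besides $v$ and the third pair has only $v$ in common inside $N(v)$. A pair at a gap of $2$ thus has two common neighbors, so by Lemma~\ref{alpha55} it cannot consist of two $5$-vertices. This immediately eliminates the degree patterns not drawn in Figure~\ref{fig7566}, and, combined with Lemma~\ref{BasicPlanarFacts}(c) (every $G[N(u_i)\cap N(u_j)]$ is a disjoint union of $K_1$'s and $K_2$'s), it lets me bound each $\card{N(u_i)\cap N(u_j)}$ and confirm $\card{N(J)}\le 13$.

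For the case analysis I would treat Figure~\ref{fig7566}(a)--(e) in turn. In the ``distinct as drawn'' subcases, each $u_i$ has enough neighbors outside $N(u_j)\cup N(u_k)$, spread far enough apart along the induced cycle $G[N(u_i)]$ (using Lemma~\ref{BasicPlanarFacts}(a),(c)), to yield an independent $2$-set $M_i\subseteq N(u_i)\setminus(N(u_j)\cup N(u_k))$; then Lemma~\ref{ind-red} applies with $k=0$ if all three $M_i$ exist and with $k=1$ if only two do. When $\card{N(J)}$ can be $13$ I need the stronger conclusion $\card{N(J)}\ge 14$, and I would obtain it from Lemma~\ref{JIsThreeFourteenWithSeven}: choosing $M_1,M_2$ so that $v$ and all of $N(v)$ remain in $u_3$'s component of $G[\set{u_3}\cup N(J)]\setminus(M_1\cup M_2)$ (possible since $M_1,M_2$ are private to $u_1,u_2$ and so avoid $N(u_3)\ni v$), hypothesis (2) follows from Lemma~\ref{BigAlphaWithSevenPlus}. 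If a candidate $M_i$ fails to be independent, I use that $G$ has no separating $3$-cycle to pick an independent alternative; if for some $u_i$ no private independent $2$-set exists, I instead assemble three connected pairwise-adjacent sets $S_1,S_2,S_3\subseteq J\cup N(J)$ with $\alpha(G[S_i\cup J])\ge 4$ and invoke Lemma~\ref{ThreeHelperFourteen}, or, if they fail to be pairwise adjacent, note that this produces an independent $5$-set and apply Lemma~\ref{uber-red} directly --- just as in the proof of Lemma~\ref{alpha6567}. The remaining one-identified-pair subcases go as in Lemma~\ref{alpha6567}: no neighbor of a $5$-vertex $u_i$ may be identified with a neighbor of its long-gap partner (else two nonadjacent vertices acquire three common neighbors, contradicting Lemma~\ref{JIsTwo}), which fixes the identification up to symmetry, after which one of the remaining $u_j$ together with two vertices on opposite sides of the resulting separating cycle is an independent $4$-set, and Lemma~\ref{JIsThree} finishes.

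The real obstacle is the bookkeeping in the $\card{N(J)}=13$ subcases: there one cannot merely contradict $\card{N(J)}\ge 12$ but must produce $\card{N(J)}\ge 14$, which means simultaneously exhibiting two disjoint private independent $2$-sets and verifying the independent $4$-set hypothesis of Lemma~\ref{JIsThreeFourteen}/\ref{JIsThreeFourteenWithSeven}. Tracking precisely which second-neighborhood vertices may coincide or be adjacent, and re-checking at each step (via the absence of separating $3$-cycles) that the independent sets one writes down really are independent, is where the case analysis is most delicate.
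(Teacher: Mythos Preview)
Your plan is close to the paper's: same bound $12\le\card{N(J)}\le 13$, same split over Figures~\ref{fig7566}(a)--(e), and the same tools for the distinct-as-drawn subcases (Lemma~\ref{ind-red} with $k=1$ for (b) and (c), with $k=0$ for (d), and Lemma~\ref{JIsThreeFourteenWithSeven} for (e)). The paper never needs Lemma~\ref{ThreeHelperFourteen} or a bare appeal to Lemma~\ref{uber-red} here, so that part of your contingency plan is superfluous but harmless.

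The genuine gap is in the one-identified-pair subcases. Your claim that a neighbor of the $5$-vertex cannot be identified with a neighbor of its \emph{long-gap} partner because this would force three common neighbors is wrong: across the long gap the two vertices share only $v$, so an identification yields just two common neighbors, and $5+6-2=9$ does not contradict Lemma~\ref{JIsTwo}. (In Lemma~\ref{alpha6567} the center is a $6$-vertex, so every pair of $u_i$'s already has two common neighbors; that is why the analogous step works there.) Moreover, in Figure~\ref{fig7566}(e) the $5$-vertex $u_2$ sits between the two short gaps and has no long-gap partner at all, so your template does not even apply. The paper's actual treatment is different for each figure. For (d) it branches on whether a vertex labeled~$1$ is part of the identified pair: if so, apply Lemma~\ref{JIsThreeWithSeven} using the label-$2$ pair; if not, the only remaining possibility (by distance constraints and the absence of separating $3$-cycles) identifies a label-$2$ vertex with a label-$3$ vertex drawn at distance four, which forces the four label-$4$ vertices into an independent $4$-set, and Lemma~\ref{JIsThree} finishes. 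For (e) the paper simply observes that after one identification $\card{N(J)}=12$, so Lemma~\ref{ind-red} with $k=1$ suffices once one checks that each $6$-vertex $u_1,u_3$ still admits a private independent $2$-set; this holds because each has three candidate such pairs among its outer neighbors with no vertex lying in all three, so at least one pair survives any single identification. That flexibility argument is the piece your sketch is missing.
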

\begin{figure}
\begin{center}
\subfloat[A 7-vertex, $v$, with non-adjacent 5-neighbors, $u_1$, $u_2$, and
$u_3$.]{\makebox[.45\textwidth]{
\begin{tikzpicture}[scale = 10]
\tikzstyle{VertexStyle}=[shape = circle, minimum size = 6pt, inner sep = 1.2pt, draw]
\Vertex[x = 0.55, y = 0.55, L = \small {$v$}]{v0}
\Vertex[x = 0.40, y = 0.65, L = \small {}]{v1}
\Vertex[x = 0.35, y = 0.50, L = \small {$u_1$}]{v2}
\Vertex[x = 0.70, y = 0.65, L = \small {}]{v3}
\Vertex[x = 0.45, y = 0.40, L = \small {}]{v4}
\Vertex[x = 0.65, y = 0.40, L = \small {}]{v5}
\Vertex[x = 0.75, y = 0.50, L = \small {$u_3$}]{v6}
\Vertex[x = 0.55, y = 0.75, L = \small {$u_2$}]{v7}
\Vertex[x = 0.85, y = 0.40, L = \small {}]{v8}
\Vertex[x = 0.85, y = 0.60, L = \small {}]{v9}
\Vertex[x = 0.25, y = 0.60, L = \small {}]{v10}
\Vertex[x = 0.25, y = 0.40, L = \small {}]{v11}
\Vertex[x = 0.40, y = 0.85, L = \small {}]{v13}
\Vertex[x = 0.70, y = 0.85, L = \small {}]{v14}
\Edge[](v0)(v1)
\Edge[](v0)(v2)
\Edge[](v0)(v4)
\Edge[](v0)(v5)
\Edge[](v0)(v6)
\Edge[](v0)(v3)
\Edge[](v3)(v6)
\Edge[](v6)(v5)
\Edge[](v2)(v1)
\Edge[](v2)(v4)
\Edge[](v4)(v5)
\Edge[](v7)(v0)
\Edge[](v7)(v3)
\Edge[](v7)(v1)
\Edge[](v9)(v6)
\Edge[](v9)(v8)
\Edge[](v8)(v5)
\Edge[](v8)(v6)
\Edge[](v9)(v3)
\Edge[](v11)(v10)
\Edge[](v11)(v2)
\Edge[](v11)(v4)
\Edge[](v10)(v2)
\Edge[](v10)(v1)
\Edge[](v13)(v14)
\Edge[](v13)(v7)
\Edge[](v14)(v7)
\Edge[](v1)(v13)
\Edge[](v3)(v14)
\end{tikzpicture}}}
~~~~~~
\subfloat[A 7-vertex, $v$, with a 6-neighbor, $u_3$, and
two 5-neighbors, $u_1$ and $u_2$. 
]{\makebox[.45\textwidth]{
\begin{tikzpicture}[xscale = -10, yscale = 10]
\tikzstyle{VertexStyle}=[shape = circle, minimum size = 6pt, inner sep = 1.2pt, draw]
\Vertex[x = 0.55, y = 0.55, L = \small {$v$}]{v0}
\Vertex[x = 0.40, y = 0.65, L = \small {}]{v1}
\Vertex[x = 0.35, y = 0.50, L = \small {$u_3$}]{v2}
\Vertex[x = 0.70, y = 0.65, L = \small {}]{v3}
\Vertex[x = 0.45, y = 0.40, L = \small {}]{v4}
\Vertex[x = 0.65, y = 0.40, L = \small {$1$}]{v5}
\Vertex[x = 0.75, y = 0.50, L = \small {$u_1$}]{v6}
\Vertex[x = 0.55, y = 0.75, L = \small {$u_2$}]{v7}
\Vertex[x = 0.85, y = 0.40, L = \small {}]{v8}
\Vertex[x = 0.85, y = 0.60, L = \small {$1$}]{v9}
\Vertex[x = 0.25, y = 0.60, L = \small {$2$}]{v10}
\Vertex[x = 0.25, y = 0.40, L = \small {$2$}]{v11}
\Vertex[x = 0.15, y = 0.50, L = \small {}]{v12}
\Vertex[x = 0.40, y = 0.85, L = \small {}]{v13}
\Vertex[x = 0.70, y = 0.85, L = \small {}]{v14}
\Edge[](v0)(v1)
\Edge[](v0)(v2)
\Edge[](v0)(v4)
\Edge[](v0)(v5)
\Edge[](v0)(v6)
\Edge[](v0)(v3)
\Edge[](v3)(v6)
\Edge[](v6)(v5)
\Edge[](v2)(v1)
\Edge[](v2)(v4)
\Edge[](v4)(v5)
\Edge[](v7)(v0)
\Edge[](v7)(v3)
\Edge[](v7)(v1)
\Edge[](v9)(v6)
\Edge[](v9)(v8)
\Edge[](v8)(v5)
\Edge[](v8)(v6)
\Edge[](v9)(v3)
\Edge[](v11)(v2)
\Edge[](v11)(v4)
\Edge[](v10)(v2)
\Edge[](v10)(v1)
\Edge[](v12)(v10)
\Edge[](v12)(v2)
\Edge[](v12)(v11)
\Edge[](v13)(v14)
\Edge[](v13)(v7)
\Edge[](v14)(v7)
\Edge[](v1)(v13)
\Edge[](v3)(v14)
\end{tikzpicture}}}

\subfloat[A 7-vertex, $v$, with a 6-neighbor, $u_2$, and
two 5-neighbors, $u_1$ and $u_3$. 
]{\makebox[.45\textwidth]{
\begin{tikzpicture}[scale = 10]
\tikzstyle{VertexStyle}=[shape = circle, minimum size = 6pt, inner sep = 1.2pt, draw]
\Vertex[x = 0.55, y = 0.55, L = \small {$v$}]{v0}
\Vertex[x = 0.40, y = 0.65, L = \small {}]{v1}
\Vertex[x = 0.35, y = 0.50, L = \small {$u_1$}]{v2}
\Vertex[x = 0.70, y = 0.65, L = \small {}]{v3}
\Vertex[x = 0.45, y = 0.40, L = \small {$1$}]{v4}
\Vertex[x = 0.65, y = 0.40, L = \small {}]{v5}
\Vertex[x = 0.75, y = 0.50, L = \small {$u_3$}]{v6}
\Vertex[x = 0.55, y = 0.75, L = \small {$u_2$}]{v7}
\Vertex[x = 0.25, y = 0.60, L = \small {$1$}]{v8}
\Vertex[x = 0.25, y = 0.40, L = \small {}]{v9}
\Vertex[x = 0.40, y = 0.85, L = \small {$2$}]{v11}
\Vertex[x = 0.70, y = 0.85, L = \small {$2$}]{v12}
\Vertex[x = 0.85, y = 0.40, L = \small {}]{v13}
\Vertex[x = 0.85, y = 0.60, L = \small {}]{v14}
\Vertex[x = 0.55, y = 0.95, L = \small {}]{v16}
\Edge[](v0)(v1)
\Edge[](v0)(v2)
\Edge[](v0)(v4)
\Edge[](v0)(v5)
\Edge[](v0)(v6)
\Edge[](v0)(v3)
\Edge[](v3)(v6)
\Edge[](v6)(v5)
\Edge[](v2)(v1)
\Edge[](v2)(v4)
\Edge[](v4)(v5)
\Edge[](v7)(v0)
\Edge[](v7)(v3)
\Edge[](v7)(v1)
\Edge[](v9)(v2)
\Edge[](v9)(v4)
\Edge[](v8)(v2)
\Edge[](v8)(v1)
\Edge[](v8)(v9)
\Edge[](v12)(v3)
\Edge[](v12)(v7)
\Edge[](v11)(v1)
\Edge[](v11)(v7)
\Edge[](v14)(v6)
\Edge[](v14)(v13)
\Edge[](v13)(v5)
\Edge[](v13)(v6)
\Edge[](v14)(v3)
\Edge[](v11)(v16)
\Edge[](v16)(v7)
\Edge[](v16)(v12)
\end{tikzpicture}}}
\subfloat[A 7-vertex, $v$, with a 5-neighbor, $u_1$, and
two 6-neighbors, $u_2$ and $u_3$. 
]{\makebox[.45\textwidth]{
\begin{tikzpicture}[xscale = 10,yscale=10]
\tikzstyle{VertexStyle}=[shape = circle, minimum size = 6pt, inner sep = 1.2pt, draw]
\Vertex[x = 0.55, y = 0.55, L = \small {$v$}]{v0}
\Vertex[x = 0.40, y = 0.65, L = \small {$4$}]{v1}
\Vertex[x = 0.35, y = 0.50, L = \small {$u_1$}]{v2}
\Vertex[x = 0.70, y = 0.65, L = \small {$4$}]{v3}
\Vertex[x = 0.45, y = 0.40, L = \small {$1$}]{v4}
\Vertex[x = 0.65, y = 0.40, L = \small {$4$}]{v5}
\Vertex[x = 0.75, y = 0.50, L = \small {$u_3$}]{v6}
\Vertex[x = 0.55, y = 0.75, L = \small {$u_2$}]{v7}
\Vertex[x = 0.25, y = 0.60, L = \small {$1$}]{v8}
\Vertex[x = 0.25, y = 0.40, L = \small {}]{v9}
\Vertex[x = 0.40, y = 0.85, L = \small {$2$}]{v11}
\Vertex[x = 0.70, y = 0.85, L = \small {$2$}]{v12}
\Vertex[x = 0.85, y = 0.40, L = \small {$3$}]{v13}
\Vertex[x = 0.85, y = 0.60, L = \small {$3$}]{v14}
\Vertex[x = 0.95, y = 0.50, L = \small {$4$}]{v15}
\Vertex[x = 0.55, y = 0.95, L = \small {}]{v16}
\Edge[](v0)(v1)
\Edge[](v0)(v2)
\Edge[](v0)(v4)
\Edge[](v0)(v5)
\Edge[](v0)(v6)
\Edge[](v0)(v3)
\Edge[](v3)(v6)
\Edge[](v6)(v5)
\Edge[](v2)(v1)
\Edge[](v2)(v4)
\Edge[](v4)(v5)
\Edge[](v7)(v0)
\Edge[](v7)(v3)
\Edge[](v7)(v1)
\Edge[](v9)(v2)
\Edge[](v9)(v4)
\Edge[](v8)(v2)
\Edge[](v8)(v1)
\Edge[](v8)(v9)
\Edge[](v12)(v3)
\Edge[](v12)(v7)
\Edge[](v11)(v1)
\Edge[](v11)(v7)
\Edge[](v14)(v6)
\Edge[](v14)(v15)
\Edge[](v13)(v5)
\Edge[](v13)(v6)
\Edge[](v13)(v15)
\Edge[](v15)(v6)
\Edge[](v14)(v3)
\Edge[](v11)(v16)
\Edge[](v16)(v7)
\Edge[](v16)(v12)
\end{tikzpicture}}}

\subfloat[A 7-vertex, $v$, with a 5-neighbor, $u_2$, and
two 6-neighbors, $u_1$ and $u_3$. 
]{\makebox[.45\textwidth]{
\begin{tikzpicture}[scale = 10]
\tikzstyle{VertexStyle}=[shape = circle, minimum size = 6pt, inner sep = 1.2pt, draw]
\Vertex[x = 0.55, y = 0.55, L = \small {$v$}]{v0}
\Vertex[x = 0.40, y = 0.65, L = \small {}]{v1}
\Vertex[x = 0.35, y = 0.50, L = \small {$u_1$}]{v2}
\Vertex[x = 0.70, y = 0.65, L = \small {}]{v3}
\Vertex[x = 0.45, y = 0.40, L = \small {}]{v4}
\Vertex[x = 0.65, y = 0.40, L = \small {}]{v5}
\Vertex[x = 0.75, y = 0.50, L = \small {$u_3$}]{v6}
\Vertex[x = 0.55, y = 0.75, L = \small {$u_2$}]{v7}
\Vertex[x = 0.25, y = 0.60, L = \small {$1$}]{v8}
\Vertex[x = 0.25, y = 0.40, L = \small {$1$}]{v9}
\Vertex[x = 0.15, y = 0.50, L = \small {}]{v10}
\Vertex[x = 0.40, y = 0.85, L = \small {}]{v11}
\Vertex[x = 0.70, y = 0.85, L = \small {}]{v12}
\Vertex[x = 0.85, y = 0.40, L = \small {$2$}]{v13}
\Vertex[x = 0.85, y = 0.60, L = \small {$2$}]{v14}
\Vertex[x = 0.95, y = 0.5, L = \small {}]{v15}
\Edge[](v0)(v1)
\Edge[](v0)(v2)
\Edge[](v0)(v4)
\Edge[](v0)(v5)
\Edge[](v0)(v6)
\Edge[](v0)(v3)
\Edge[](v3)(v6)
\Edge[](v6)(v5)
\Edge[](v2)(v1)
\Edge[](v2)(v4)
\Edge[](v4)(v5)
\Edge[](v7)(v0)
\Edge[](v7)(v3)
\Edge[](v7)(v1)
\Edge[](v9)(v2)
\Edge[](v9)(v4)
\Edge[](v8)(v2)
\Edge[](v8)(v1)
\Edge[](v10)(v8)
\Edge[](v10)(v2)
\Edge[](v10)(v9)
\Edge[](v12)(v3)
\Edge[](v12)(v7)
\Edge[](v11)(v1)
\Edge[](v11)(v7)
\Edge[](v12)(v11)
\Edge[](v14)(v6)
\Edge[](v14)(v15)
\Edge[](v13)(v5)
\Edge[](v13)(v6)
\Edge[](v13)(v15)
\Edge[](v15)(v6)
\Edge[](v14)(v3)
\end{tikzpicture}}}

\caption{The five cases of Lemma~\ref{alpha7566}.}
\label{fig7566}
\end{center}
\end{figure}
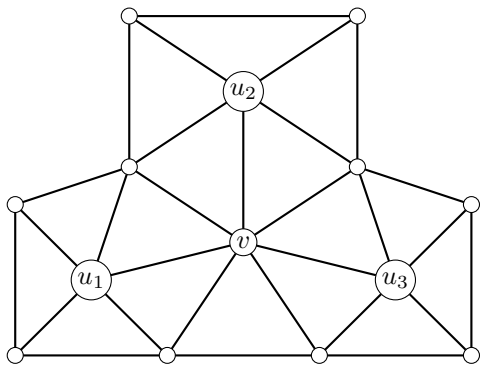
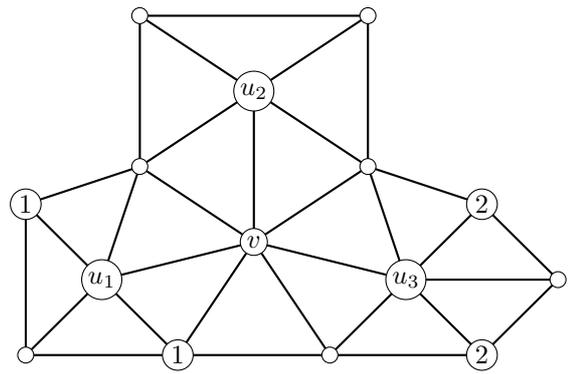
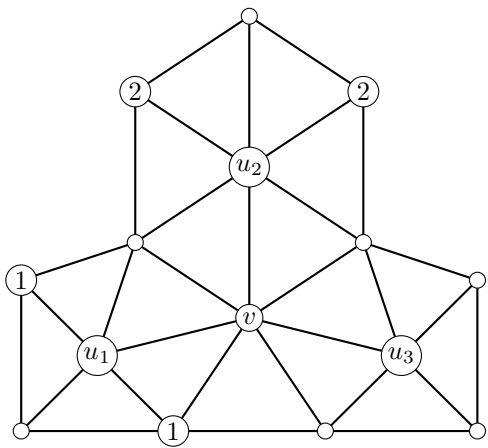
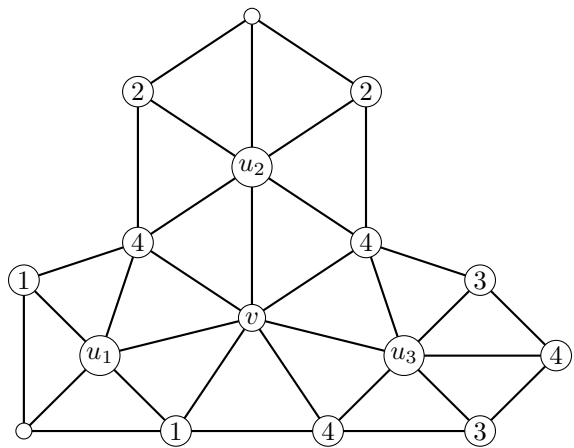
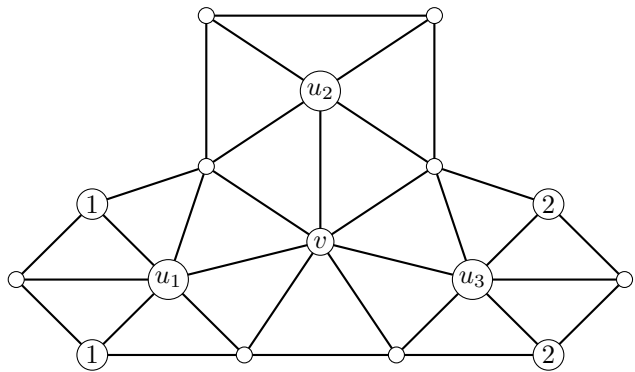
\begin{proof}
Lemma~\ref{CrunchToOneVertexNeighborhoodSize} yields $12 \le
\card{N(\set{u_1, u_2, u_3})} \le d(u_1) + d(u_2) + d(u_3) - 4\le 5+6+6-4\le
13$.  
In Figure~\ref{fig7566}(a), $\card{N(\set{u_1, u_2, u_3})} = 11$. 
So, by symmetry, we assume that the vertices are arranged as in
Figures~\ref{fig7566}(b,c) with all vertices distinct as drawn or as in
Figures~\ref{fig7566}(d,e) with at most one pair of vertices identified.

First suppose the vertices are disinct as drawn.  For
Figures~\ref{fig7566}(b,c,d), we apply Lemma~\ref{ind-red}; for (b) and (c) we
use $k=1$, and for (d) we use $k=0$.
For Figure~\ref{fig7566}(e), we apply Lemma~\ref{JIsThreeFourteenWithSeven},
using the vertices labeled $1$ for $M_1$ and the those labeled $2$ for $M_2$.  
Now $\card{N(\{u_1,u_2,u_3\})}\ge 14$ is a contradiction.

So, instead suppose that a single pair of vertices is identified in one of
Figures~\ref{fig7566}(d,e). 
First consider (d).
If a vertex labeled $1$ is identified with another vertex, then we apply 
Lemma~\ref{JIsThreeWithSeven} using the vertices labeled $2$ 
for the independent $2$-set (vertices labeled 1 and 2 cannot be identified,
since they are drawn at distance at most 3).  Otherwise, the identified
vertices must be those labeled $2$ and $3$ that are drawn at distance four.  
Now the vertices labeled $4$ are pairwise at distance two, so must be
an independent 4-set.  Now we get a contradiction, by applying Lemma
\ref{JIsThree} using the vertices labeled $1$ for the independent $2$-set.

Finally, consider Figure~\ref{fig7566}(e), with a single pair of vertices
identified.  Again we apply Lemma~\ref{ind-red}, with $k=1$. Since
$u_1$ has three possibilities for its pair of nonadjacent neighbors, and no
neighbor of $u_1$ appears in all three of these pairs,  $u_1$ satisfies
condition (2).  Similarly, $u_3$ also satisfies condition (2).
\end{proof}

\begin{lem}
Let $v_1$, $v_2$, $v_3$ be the corners of a 3-face, each a $6^+$-vertex.  Let
$u_1$, $u_2$, $u_3$ be the other pairwise common neighbors of $v_1$, $v_2$,
$v_3$, i.e., $u_1$ is adjacent to $v_1$ and $v_2$, $u_2$ is adjacent to $v_2$ 
and $v_3$, and $u_3$ is adjacent to $v_3$ and $v_1$.  
We cannot have $|N(\set{u_1,u_2,u_3})|\le 13$.  
In particular, we cannot have $d(u_1)=d(u_2)=5$ and $d(u_3)\le 6$.
\label{alpha556tri}
\end{lem}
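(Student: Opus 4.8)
The plan is to prove the (equivalent, stronger) statement that $|N(\set{u_1,u_2,u_3})|\ge 14$ in a minimal $G$; this contradicts $|N(\set{u_1,u_2,u_3})|\le 13$ and also yields the ``in particular'' clause immediately, since $d(u_1)=d(u_2)=5$ and $d(u_3)\le 6$ would force $|N(\set{u_1,u_2,u_3})|\le(5+5+6)-3=13$ (subtracting $1$ for each of $v_1\in N(u_1)\cap N(u_3)$, $v_2\in N(u_1)\cap N(u_2)$, $v_3\in N(u_2)\cap N(u_3)$). So throughout I assume $|N(J)|\le 13$ for $J:=\set{u_1,u_2,u_3}$ and derive a contradiction.

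First I would check that $J$ is an independent $3$-set. The $u_i$ are distinct: if $u_1=u_2$, this vertex is adjacent to all of $v_1,v_2,v_3$, so the triangle $v_1v_2v_3$ lies inside its neighborhood, contradicting Lemma~\ref{BasicPlanarFacts}(a) (neighborhoods are induced cycles, hence triangle-free). They are pairwise nonadjacent: if $u_1\adj u_2$, then $u_1u_2v_2$ is a $3$-cycle and hence a face, and walking around $v_2$ through the faces $v_1v_2v_3$, $v_1v_2u_1$, $u_1u_2v_2$, $v_2v_3u_2$ shows that $v_2$ has exactly the four neighbors $u_1,v_1,v_3,u_2$, contradicting $d(v_2)\ge 6$ (this is the only place the $6^+$-hypothesis on the $v_i$ is used). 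Since the three $6^+$-vertices $v_1,v_2,v_3$ force $5|G|+3\le 2|E(G)|=6|G|-12$, i.e. $|G|\ge 15$, the set $J$ is non-maximal whenever $|N(J)|\le 11$; so Lemma~\ref{CrunchToOneVertexNeighborhoodSize} gives $|N(J)|\ge 12$, and we may assume $|N(J)|\in\set{12,13}$.

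The main case is that each of $N(u_1)\setminus(N(u_2)\cup N(u_3))$, $N(u_2)\setminus(N(u_1)\cup N(u_3))$, and $N(u_3)\setminus(N(u_1)\cup N(u_2))$ contains an independent $2$-set; then Lemma~\ref{ind-red}, applied to $J$ with $k=0$, is a contradiction, since its condition (1) is exactly $3\ge\frac3{13}|N(J)|$. To see when this happens, note that the face $u_1v_1v_2$ makes $v_1$ and $v_2$ consecutive on the induced cycle $G[N(u_1)]$, whose length is $d(u_1)\ge 5$; so if $N(u_1)\cap(N(u_2)\cup N(u_3))=\set{v_1,v_2}$, deleting $v_1,v_2$ leaves an induced path on $\ge 3$ vertices, whose two endpoints form the desired independent $2$-set, and similarly for $u_2$ and $u_3$. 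Hence the only obstruction is an ``extra'' common neighbor, namely a vertex of $N(u_i)\cap N(u_j)$ other than $v_k$ that is badly placed on the cycle. By Lemma~\ref{BasicPlanarFacts}(c), each $N(u_i)\cap N(u_j)$ is a disjoint union of copies of $K_1$ and $K_2$; combining this with planarity and the absence of separating $3$-cycles leaves only a short list of remaining configurations, and when $|N(J)|=12$ it further forces exactly one coincidence of the depicted vertices.

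The main obstacle is the case analysis over this list. In each configuration I would produce the independent sets needed for the appropriate reducibility lemma: when two of the $u_i$ still have a private independent $2$-set, I apply Lemma~\ref{JIsThreeFourteen} (or its specialization Lemma~\ref{JIsThreeFourteenWithSeven} when a $7^+$-vertex together with its full neighborhood lies in the union of the other two neighborhoods) to conclude $|N(J)|\ge 14$; when only one $u_i$ does, I apply Lemma~\ref{JIsThree} or Lemma~\ref{JIsThreeWithSeven}, building the required independent $4$-set inside $J\cup N(u_j)\cup N(u_k)$ — here I would use that $G$ has no separating $3$-cycle to argue that two vertices lying on opposite sides of such a cycle are nonadjacent, so that a candidate $4$-set is genuinely independent; and in sub-cases where the extra common neighbor still leaves two private independent $2$-sets I use plain Lemma~\ref{ind-red} with $k=0$ or $k=1$. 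Every branch ends in $|N(J)|\ge 14$ or an outright contradiction, which proves the lemma.
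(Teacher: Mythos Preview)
Your core argument is the paper's: verify that $J=\{u_1,u_2,u_3\}$ is an independent $3$-set, then apply Lemma~\ref{ind-red} with $k=0$, taking for each $u_i$ the two neighbors of $u_i$ that lie on faces with $v_j$ and $v_k$ (equivalently, the endpoints of the path $G[N(u_i)]\setminus\{v_j,v_k\}$).  The paper makes exactly this choice and verifies both the independence of each pair and their distinctness across different $i$'s by the single observation that these six vertices are at pairwise drawn distance at most $3$, so identifying any two would create a separating $3$-cycle through two of the $v_\ell$.  Your preliminary checks (that the $u_i$ are distinct and pairwise nonadjacent, and that $J$ is non-maximal) are more explicit than the paper's, which is fine.

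Where you diverge is in handling ``extra common neighbors.''  You anticipate an obstruction and propose a case split invoking Lemmas~\ref{JIsThreeFourteen}, \ref{JIsThreeFourteenWithSeven}, \ref{JIsThree}, and \ref{JIsThreeWithSeven}.  The paper never needs this machinery here: it simply remarks that the same separating-$3$-cycle reasoning keeps the six chosen vertices distinct even when there are additional coincidences elsewhere in the picture, so Lemma~\ref{ind-red} with $k=0$ still fires.  Since you only sketch your case analysis rather than carry it out, your write-up ends up both longer and less complete than the paper's direct argument.  The cleaner route is to stay with Lemma~\ref{ind-red} throughout and argue once, via the face structure around the triangle $v_1v_2v_3$, that the specific private pairs survive.
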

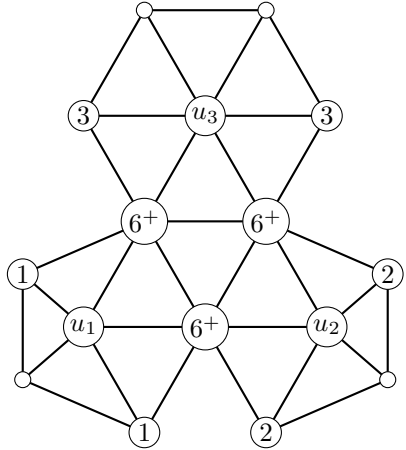
\begin{figure}
\begin{center}
\subfloat[A 3-face $v_1v_2v_3$, such that the pairwise common neighbors of
$v_1$, $v_2$, $v_3$ have degrees 5, 5, and at most 6.
]{\makebox[.45\textwidth]{
\begin{tikzpicture}[xscale = 16, yscale=14]
\tikzstyle{VertexStyle}=[shape = circle, minimum size = 6pt, inner sep = 1.2pt, draw]
\Vertex[x = 0.60, y = 0.85, L = \small {$u_3$}]{v0}
\Vertex[x = 0.55, y = 0.75, L = \small {$6^+$}]{v1}
\Vertex[x = 0.65, y = 0.75, L = \small {$6^+$}]{v2}
\Vertex[x = 0.60, y = 0.65, L = \small {$6^+$}]{v3}
\Vertex[x = 0.70, y = 0.65, L = \small {$u_2$}]{v4}
\Vertex[x = 0.50, y = 0.65, L = \small {$u_1$}]{v5}
\Vertex[x = 0.75, y = 0.70, L = \small {$2$}]{v6}
\Vertex[x = 0.65, y = 0.55, L = \small {$2$}]{v7}
\Vertex[x = 0.75, y = 0.60, L = \small {}]{v8}
\Vertex[x = 0.45, y = 0.70, L = \small {$1$}]{v9}
\Vertex[x = 0.45, y = 0.60, L = \small {}]{v10}
\Vertex[x = 0.55, y = 0.55, L = \small {$1$}]{v11}
\Vertex[x = 0.55, y = 0.95, L = \small {}]{v12}
\Vertex[x = 0.65, y = 0.95, L = \small {}]{v13}
\Vertex[x = 0.70, y = 0.85, L = \small {$3$}]{v14}
\Vertex[x = 0.50, y = 0.85, L = \small {$3$}]{v15}
\Edge[](v2)(v0)
\Edge[](v2)(v1)
\Edge[](v2)(v3)
\Edge[](v2)(v4)
\Edge[](v3)(v4)
\Edge[](v3)(v1)
\Edge[](v3)(v5)
\Edge[](v1)(v5)
\Edge[](v1)(v0)
\Edge[](v6)(v4)
\Edge[](v7)(v4)
\Edge[](v8)(v4)
\Edge[](v6)(v2)
\Edge[](v6)(v8)
\Edge[](v7)(v8)
\Edge[](v7)(v3)
\Edge[](v9)(v1)
\Edge[](v9)(v5)
\Edge[](v9)(v10)
\Edge[](v11)(v3)
\Edge[](v11)(v5)
\Edge[](v11)(v10)
\Edge[](v10)(v5)
\Edge[](v0)(v15)
\Edge[](v0)(v12)
\Edge[](v0)(v13)
\Edge[](v0)(v14)
\Edge[](v14)(v2)
\Edge[](v14)(v13)
\Edge[](v12)(v13)
\Edge[](v12)(v15)
\Edge[](v15)(v1)
\end{tikzpicture}}}
\caption{The key case of Lemma~\ref{alpha556tri}.}
\label{figalpha556tri}
\end{center}
\end{figure}
\begin{proof}
If the only pairwise common neighbors of the $u_i$ are the $v_i$, then two
$u_i$ are 5-vertices and the third is a $6^-$-vertex. 
The case where the $u_i$ have more pairwise common neighbors is nearly identical,
and we remark on it briefly at the end of the proof.
So suppose that $d(u_1)=d(u_2)=5$ and $d(u_3)=6$, as shown in
Figure~\ref{figalpha556tri}; the case where $d(u_3)=5$ is nearly identical. 
We will apply Lemma~\ref{ind-red} with $J=\set{u_1,u_2,u_3}$ and $k=0$.
Clearly, $J$ is an independent set.  Now we verify that each vertex
of $J$ satisfies condition (2).  Since $G$ has no separating 3-cycle, the two
vertices in each pair with a common label (among $\set{1,2,3}$) are distinct and
nonadjacent.  Similarly, the vertices with labels in $\set{1,2,3}$ are
distinct, since they are drawn at pairwise distance at most three, and $G$ has no
separating 3-cycle.  Thus, we can apply Lemma~\ref{ind-red}, as desired.

In the more general case where the $u_i$ have pairwise common neighbors in
addition to the $v_i$, the argument above still shows that the vertices with
labels in $\set{1,2,3}$ are distinct.  So again, we can apply
Lemma~\ref{ind-red} with $k=0$.
\end{proof}
\begin{figure}
\begin{center}
\subfloat[Here $u_2$ and $u_3$ have a common neighbor in
$N(u_1)$.]{\makebox[.45\textwidth]{
\begin{tikzpicture}[scale = 15]
\tikzstyle{VertexStyle}=[shape = circle, minimum size = 6pt, inner sep = 1.2pt, draw]
\Vertex[x = 0.55, y = 0.65, L = \small {$u_1$}]{v0}
\Vertex[x = 0.55, y = 0.80, L = \small {}]{v1}
\Vertex[x = 0.45, y = 0.75, L = \small {}]{v2}
\Vertex[x = 0.65, y = 0.75, L = \small {$4$}]{v3}
\Vertex[x = 0.70, y = 0.65, L = \small {$1$}]{v4}
\Vertex[x = 0.65, y = 0.55, L = \small {$4$}]{v5}
\Vertex[x = 0.45, y = 0.55, L = \small {$1$}]{v6}
\Vertex[x = 0.40, y = 0.65, L = \small {$4$}]{v7}
\Vertex[x = 0.65, y = 0.85, L = \small {$u_3$}]{v8}
\Vertex[x = 0.45, y = 0.85, L = \small {$u_2$}]{v9}
\Vertex[x = 0.60, y = 0.95, L = \small {$3$}]{v10}
\Vertex[x = 0.75, y = 0.80, L = \small {$3$}]{v11}
\Vertex[x = 0.75, y = 0.95, L = \small {$4$}]{v12}
\Vertex[x = 0.50, y = 0.95, L = \small {$2$}]{v13}
\Vertex[x = 0.35, y = 0.80, L = \small {$2$}]{v14}
\Vertex[x = 0.35, y = 0.95, L = \small {}]{v15}
\Edge[](v1)(v0)
\Edge[](v2)(v0)
\Edge[](v3)(v0)
\Edge[](v4)(v0)
\Edge[](v5)(v0)
\Edge[](v6)(v0)
\Edge[](v7)(v0)
\Edge[](v1)(v3)
\Edge[](v4)(v3)
\Edge[](v4)(v5)
\Edge[](v6)(v5)
\Edge[](v2)(v7)
\Edge[](v6)(v7)
\Edge[](v1)(v2)
\Edge[](v9)(v13)
\Edge[](v9)(v15)
\Edge[](v9)(v14)
\Edge[](v14)(v15)
\Edge[](v13)(v15)
\Edge[](v8)(v11)
\Edge[](v8)(v12)
\Edge[](v8)(v10)
\Edge[](v10)(v12)
\Edge[](v11)(v12)
\Edge[](v9)(v2)
\Edge[](v9)(v1)
\Edge[](v8)(v1)
\Edge[](v8)(v3)
\Edge[](v11)(v3)
\Edge[](v14)(v2)
\Edge[](v1)(v13)
\Edge[](v1)(v10)
\end{tikzpicture}}}

\subfloat[Here $u_2$ and $u_3$ have adjacent neighbors in
$N(u_1)$.]{\makebox[.45\textwidth]{
\begin{tikzpicture}[scale = 15]
\tikzstyle{VertexStyle}=[shape = circle, minimum size = 6pt, inner sep = 1.2pt, draw]
\Vertex[x = 0.50, y = 0.50, L = \small {$u_1$}]{v0}
\Vertex[x = 0.40, y = 0.40, L = \small {$4b$}]{v1}
\Vertex[x = 0.60, y = 0.40, L = \small {}]{v2}
\Vertex[x = 0.65, y = 0.50, L = \small {$4$}]{v3}
\Vertex[x = 0.60, y = 0.60, L = \small {$1$}]{v4}
\Vertex[x = 0.50, y = 0.65, L = \small {}]{v5}
\Vertex[x = 0.40, y = 0.60, L = \small {$1$}]{v6}
\Vertex[x = 0.35, y = 0.50, L = \small {$4$}]{v7}
\Vertex[x = 0.75, y = 0.45, L = \small {$u_3$}]{v8}
\Vertex[x = 0.25, y = 0.45, L = \small {$u_2$}]{v9}
\Vertex[x = 0.15, y = 0.50, L = \small {$4$}]{v10}
\Vertex[x = 0.25, y = 0.55, L = \small {$2$}]{v11}
\Vertex[x = 0.15, y = 0.40, L = \small {$2$}]{v12}
\Vertex[x = 0.85, y = 0.50, L = \small {$4$}]{v13}
\Vertex[x = 0.85, y = 0.40, L = \small {$3$}]{v14}
\Vertex[x = 0.75, y = 0.55, L = \small {$3$}]{v15}
\Edge[](v1)(v0)
\Edge[](v1)(v7)
\Edge[](v2)(v0)
\Edge[](v2)(v1)
\Edge[](v3)(v0)
\Edge[](v3)(v2)
\Edge[](v3)(v4)
\Edge[](v4)(v0)
\Edge[](v5)(v0)
\Edge[](v5)(v4)
\Edge[](v5)(v6)
\Edge[](v6)(v0)
\Edge[](v6)(v7)
\Edge[](v7)(v0)
\Edge[](v8)(v2)
\Edge[](v8)(v3)
\Edge[](v9)(v7)
\Edge[](v9)(v1)
\Edge[](v11)(v9)
\Edge[](v11)(v7)
\Edge[](v11)(v10)
\Edge[](v12)(v10)
\Edge[](v12)(v9)
\Edge[](v12)(v1)
\Edge[](v9)(v10)
\Edge[](v8)(v14)
\Edge[](v8)(v13)
\Edge[](v8)(v15)
\Edge[](v15)(v3)
\Edge[](v15)(v13)
\Edge[](v14)(v2)
\Edge[](v14)(v13)
\end{tikzpicture}}}

\subfloat[Here $u_2$ and $u_3$ have neighbors at
distance 2\ in $N(u_1)$.]{\makebox[.45\textwidth]{
\begin{tikzpicture}[scale = 15]
\tikzstyle{VertexStyle}=[shape = circle,	
								 minimum size = 6pt,
								 inner sep = 1.2pt,
                                 draw]
\Vertex[x = 0.550000011920929, y = 0.650000005960464, L = \small {$u_1$}]{v0}
\Vertex[x = 0.550000011920929, y = 0.799999997019768, L = \small {$1$}]{v1}
\Vertex[x = 0.449999988079071, y = 0.75, L = \small {$4$}]{v2}
\Vertex[x = 0.649999976158142, y = 0.75, L = \small {$4$}]{v3}
\Vertex[x = 0.699999988079071, y = 0.650000005960464, L = \small {$4c$}]{v4}
\Vertex[x = 0.649999976158142, y = 0.550000011920929, L = \small {}]{v5}
\Vertex[x = 0.449999988079071, y = 0.550000011920929, L = \small {$1$}]{v6}
\Vertex[x = 0.400000005960464, y = 0.650000005960464, L = \small {$4b$}]{v7}
\Vertex[x = 0.75, y = 0.75, L = \small {$u_3$}]{v8}
\Vertex[x = 0.349999994039536, y = 0.75, L = \small {$u_2$}]{v9}
\Vertex[x = 0.699999988079071, y = 0.849999994039536, L = \small {$3$}]{v10}
\Vertex[x = 0.850000023841858, y = 0.699999988079071, L = \small {$3$}]{v11}
\Vertex[x = 0.850000023841858, y = 0.849999994039536, L = \small {$4$}]{v12}
\Vertex[x = 0.400000005960464, y = 0.849999994039536, L = \small {$2$}]{v13}
\Vertex[x = 0.25, y = 0.699999988079071, L = \small {$2$}]{v14}
\Vertex[x = 0.25, y = 0.849999994039536, L = \small {$4$}]{v15}
\Edge[](v1)(v0)
\Edge[](v2)(v0)
\Edge[](v3)(v0)
\Edge[](v4)(v0)
\Edge[](v5)(v0)
\Edge[](v6)(v0)
\Edge[](v7)(v0)
\Edge[](v1)(v3)
\Edge[](v4)(v3)
\Edge[](v4)(v5)
\Edge[](v6)(v5)
\Edge[](v2)(v7)
\Edge[](v6)(v7)
\Edge[](v1)(v2)
\Edge[](v8)(v3)
\Edge[](v8)(v4)
\Edge[](v9)(v2)
\Edge[](v9)(v7)
\Edge[](v9)(v13)
\Edge[](v9)(v15)
\Edge[](v9)(v14)
\Edge[](v14)(v7)
\Edge[](v14)(v15)
\Edge[](v13)(v15)
\Edge[](v13)(v2)
\Edge[](v8)(v11)
\Edge[](v8)(v12)
\Edge[](v8)(v10)
\Edge[](v10)(v3)
\Edge[](v10)(v12)
\Edge[](v11)(v4)
\Edge[](v11)(v12)
\end{tikzpicture}}}

\caption{The cases of Lemma~\ref{alpha755big}. The three possibilities for an
independent 3-set $\set{u_1,u_2,u_3}$ where $d(u_1)=7$, $d(u_2)=d(u_3)=5$, and
each of $u_2$ and $u_3$ has two neighbors in common with $u_1$.
\label{figalpha755big}
}
\end{center}
\end{figure}
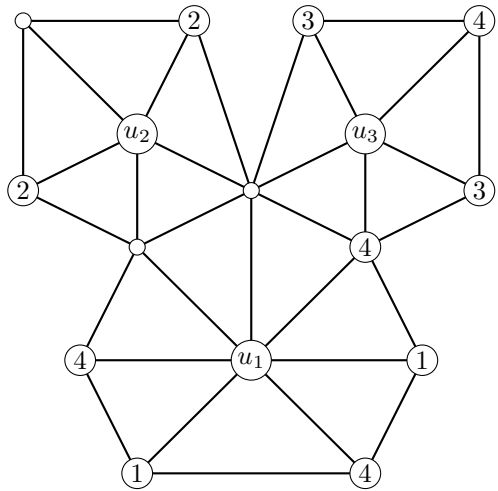
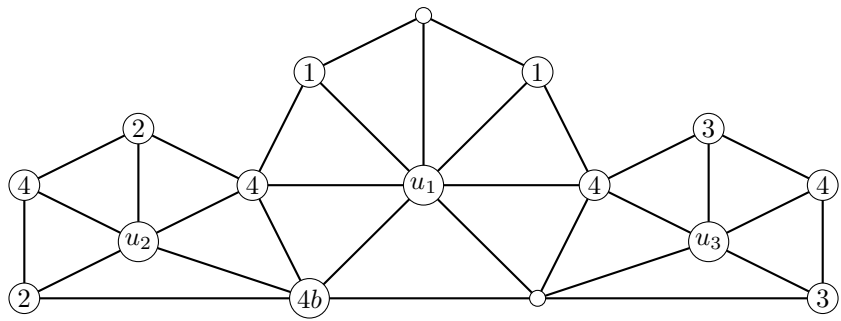
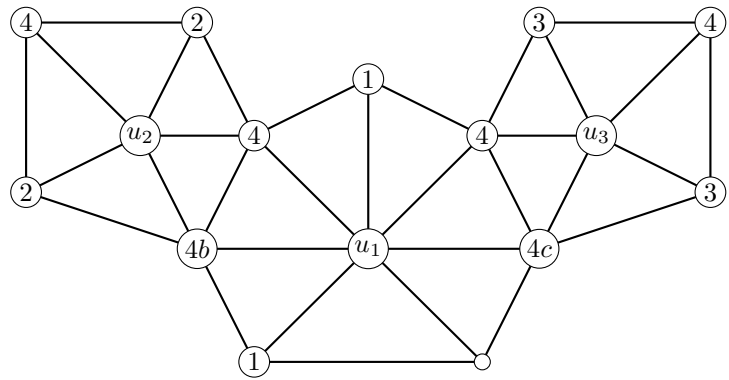

\begin{lem}
Let $u_1$ be a 7-vertex with nonadjacent 5-vertices $u_2$ and $u_3$ each at
distance two from $u_1$.  A minimal $G$ cannot have $u_1$ and $u_2$ with two
common neighbors and also $u_1$ and $u_3$ with two common neighbors.
\label{alpha755big}
\end{lem}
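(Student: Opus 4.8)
The plan is to follow the template of the proof of Lemma~\ref{alpha665big}, with the $6$-vertex there replaced by the $7$-vertex $u_1$. Set $J=\set{u_1,u_2,u_3}$; since $G$ has no separating $3$-cycle and the $u_i$ are pairwise nonadjacent, $J$ is independent. Lemma~\ref{CrunchToOneVertexNeighborhoodSize} gives $\card{N(J)}\ge 12$, and by inclusion--exclusion, using that $u_1$ shares two common neighbors with each of $u_2,u_3$ and (by Lemma~\ref{alpha55}) that $u_2,u_3$ share at most one neighbor, we get $\card{N(J)}\le 13$. Since $G$ is a $4$-connected triangulation with $G[N(u_1)]$ an induced $7$-cycle (Lemma~\ref{BasicPlanarFacts}), the two common neighbors of $u_1$ with $u_2$ are consecutive on that cycle, and likewise for $u_3$; up to symmetry this yields exactly the three embeddings of Figure~\ref{figalpha755big}, according to whether the two consecutive pairs overlap in a vertex~(a), are edge-adjacent~(b), or are separated by a single vertex of the $7$-cycle~(c).

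In each embedding I would first handle the case that all drawn vertices are distinct, so $\card{N(J)}=13$. Here I would locate, for each $i$, an independent $2$-set inside the private neighborhood $N(u_i)\setminus\bigcup_{j\ne i}N(u_j)$: for $u_1$, two suitably spaced vertices of the path(s) remaining when the two common pairs are removed from the induced $7$-cycle $G[N(u_1)]$; for $u_2$ and $u_3$, the two endpoints of the $P_3$ remaining in the induced $5$-cycle $G[N(u_i)]$ after deleting its two vertices in $N(u_1)$. That these pairs are independent and genuinely private follows from the induced-cycle structure together with the absence of separating $3$-cycles. Then Lemma~\ref{ind-red} with $k=0$ is contradicted, since $\card{N(J)}\le 13$. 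In an embedding where one such private $2$-set is not available I would instead apply Lemma~\ref{JIsThreeFourteen} with the two surviving private $2$-sets and the independent $4$-set marked ``$4$'' in the figure, forcing $\card{N(J)}\ge 14$, which is again impossible.

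Next I would treat the case $\card{N(J)}=12$, where exactly one pair of the thirteen drawn vertices coincides. Most identifications are excluded outright: identifying a private neighbor of $u_2$ with one of $u_3$ would give $u_2,u_3$ two common neighbors, contradicting Lemma~\ref{alpha55}; identifying a private neighbor of $u_2$ (or of $u_3$) with a vertex of $N(u_1)$ is sharply limited by Lemma~\ref{BasicPlanarFacts}(c), by Lemma~\ref{JIsTwo}, and by the no-separating-$3$-cycle condition. For each surviving possibility, if $u_1$ and two of $u_2,u_3$ still possess private independent $2$-sets --- swapping in an alternative pair (labelled ``$3b$'', ``$4b$'', or ``$4c$'' in the figure) for any that was damaged --- then Lemma~\ref{ind-red} with $k=1$ gives a contradiction, which is legitimate now that $\card{N(J)}\le 12$; otherwise I would apply Lemma~\ref{JIsThree}, using a surviving private $2$-set of $u_1$ for hypothesis~(1) and an independent $4$-set in the relevant induced subgraph for hypothesis~(2).

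The main obstacle is the bookkeeping in the $\card{N(J)}=12$ case: one must verify, embedding by embedding, exactly which single vertex identifications are compatible with planarity, $4$-connectivity, and the absence of separating $3$-cycles, and then check that after each such identification a usable independent $2$-set (or, failing that, the independent $4$-set needed for Lemma~\ref{JIsThree}) still remains. The ``all distinct'' cases are, by contrast, routine once the private $2$-sets have been exhibited.
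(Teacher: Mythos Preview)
Your plan parallels the paper's proof, and for the ``all distinct'' case your use of Lemma~\ref{ind-red} with $k=0$ is valid (the paper instead applies Lemma~\ref{JIsThreeFourteenWithSeven}, which exploits the $7$-vertex $u_1$ directly and so needs private $2$-sets only for $u_2$ and $u_3$; your route requires also exhibiting one for $u_1$, but one exists in each of the three embeddings). Two genuine gaps remain.

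First, you invoke Lemma~\ref{CrunchToOneVertexNeighborhoodSize} to get $\card{N(J)}\ge 12$, but that lemma requires $J$ to be \emph{non-maximal}, which you never check. The paper handles the maximal case separately: if two or more pairs of drawn vertices coincide then $\card{N(J)}\le 11$ and $|G|\le 14$, and when $|G|=14$ an independent $4$-set must be produced explicitly (the paper does this using either the vertices labelled $4$ or a pair labelled $i$ together with $J\setminus\set{u_i}$). Second, your claim that identifying a private neighbor of $u_2$ with one of $u_3$ yields \emph{two} common neighbors of $u_2,u_3$ is false in Figures~\ref{figalpha755big}(b,c): there $u_2$ and $u_3$ share no neighbor as drawn, so a single identification gives them exactly one, which Lemma~\ref{alpha55} permits. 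The paper's actual argument here is that unless the identified pair is a label-$2$ vertex and a label-$3$ vertex drawn at distance four, Lemma~\ref{ind-red} with $k=1$ still applies; for the surviving distance-four identifications it then applies Lemma~\ref{JIsThree}, building the required independent $4$-set through a case analysis that repeatedly uses planarity (four vertices cannot be pairwise adjacent) and swaps in alternate vertices when the set labelled $4$ fails. This last step---the one you flag as ``the main obstacle''---is where the real content lies, and your sketch does not yet supply it.
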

\begin{proof}
This situation is shown in Figures~\ref{figalpha755big}(a,b,c), possibly with
some vertices identified.  Let $J=\{u_1,u_2,u_3\}$.  Suppose that more than a
single pair of vertices is identified, which implies $|N(J)|\le 11$.  If $J$ is
a non-maximal independent set, then this contradicts
Lemma~\ref{CrunchToOneVertexNeighborhoodSize}.  So suppose that $J$ is a
maximal independent set.
If $|N(J)|\le 10$, then $|G|\le 13$, so $J$ is the desired independent set of
size $\frac3{13}|G|$.  Otherwise, $|G|=14$, so exactly three vertices are
identified.  Now we find an independent 4-set.  Either we can take the four
vertices labeled 4, or the two labeled $i$, together with $J\setminus\{u_i\}$,
for some $i\in \{1,2,3\}$.
Thus, at most one pair of vertices drawn as distinct are identified.

If all vertices
labeled $2$ or $3$ are distinct as drawn, then we apply
Lemma~\ref{JIsThreeFourteenWithSeven} and get a contradiction.  By
Lemma~\ref{CrunchToOneVertexNeighborhoodSize}, the only other possibility is
that exactly one pair of vertices is identified. Such a pair must consist of
vertices labeled $2$ and $3$ that are drawn at distance four (otherwise
we apply Lemma~\ref{ind-red}, with $k=1$). In
Figure~\ref{figalpha755big}(a), this is impossible, since
the two $5$-vertices $u_2$ and $u_3$ would have two neighbors in common,
violating Lemma~\ref{alpha55}.

Now we consider the cases shown in Figures~\ref{figalpha755big}(b,c)
simultaneously.  We apply Lemma~\ref{JIsThree} using the vertices labeled $1$
for the independent $2$-set.  Let $I_1$ be the set of vertices labeled $4$. If
$I_1$ is independent, then we are done; so assume not.  Recall that a vertex
labeled $2$ is identified with a vertex labeled $3$. 

Suppose the vertices labeled $4$ in $N(u_2)\setminus N(u_1)$ and $N(u_3)\setminus
N(u_1)$ are not adjacent. Now by symmetry, we may assume that the vertex
labeled $4$ in $N(u_1) \cap N(u_2)$ is adjacent to the vertex
labeled $4$ in $N(u_3) \setminus N(u_1)$. Let $I_2$ be the set made from
$I_1$ by replacing the vertex labeled $4$ in $N(u_1) \cap N(u_2)$ with the vertex
labeled $4b$.  If $I_2$ is independent, then we are done; so assume not.  Now
the vertex labeled $4b$ must be adjacent to the vertex labeled $4$ in $N(u_3)
\setminus N(u_1)$, but this makes a separating 3-cycle (consisting of two
vertices labeled 4 and one labeled $4b$), a contradiction.

So, we may assume that the vertices labeled $4$ in $N(u_2)\setminus N(u_1)$ and
$N(u_3)\setminus N(u_1)$ are adjacent.  Suppose the topmost vertex labeled $2$
is identified with the topmost vertex labeled $3$.  Now again we are done;
our independent 4-set consists of the two neighbors of $u_1$ labeled 4, together
with an independent 2-set from among the two leftmost and two rightmost vertices
(by planarity, they cannot all four be pairwise adjacent).

The only remaining possibility is that the bottommost vertex labeled $2$ is
identified with the bottommost vertex labeled $3$ (since the two topmost
vertices labeled 4 are adjacent).  
If we are in Figure~\ref{figalpha755big}(b), then the vertex labeled $4b$ is
a 5-vertex; since it shares two neighbors with $u_3$, another 5-vertex, we
contradict Lemma~\ref{alpha55}.
Hence, we must be in Figure~\ref{figalpha755big}(c).  
Now our independent 4-set consists of the two neighbors of $u_1$ labeled 4b and
4c, together with an indpendent 2-set from among the four topmost vertices
(again, by planarity, they cannot all be pairwise adjacent).
\end{proof}

\begin{lem}\label{reduce7vertex}
Suppose that a minimal $G$ contains a 7-vertex $v$ with no 5-neighbor.  Now $v$ cannot
have at least five 6-neighbors, each of which has a 5-neighbor. 
\end{lem}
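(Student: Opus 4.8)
The plan is to reach a contradiction by producing a configuration forbidden by Lemma~\ref{alpha755big}. Assume $v$ is a $7$-vertex of a minimal $G$ with no $5$-neighbor, and suppose at least five of the neighbors of $v$ are $6$-vertices each having a $5$-neighbor. Since $\delta(G)=5$ and $G$ has no separating $3$-cycle (Lemma~\ref{alphaNoTriangles}), Lemma~\ref{BasicPlanarFacts}(a) gives that $N(v)$ induces a $7$-cycle $u_1u_2\cdots u_7$, indices read mod $7$, with every $u_i$ a $6^+$-vertex. Let $S=\{\,i : u_i \text{ is a }6\text{-vertex with a }5\text{-neighbor}\,\}$, so $|S|\ge 5$. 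It will suffice to find two distinct, nonadjacent $5$-vertices $z,z'$ such that each of $z,z'$ has at least two common neighbors with $v$ (each is then automatically at distance two from $v$, since $v$ has no $5$-neighbor), because this contradicts Lemma~\ref{alpha755big}.

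The first step is to fix, for each $i\in S$, the local structure around $u_i$. Since $u_i$ is a $6$-vertex and $vu_{i-1},vu_{i+1}\in E(G)$, the cycle induced on $N(u_i)$ is $v,u_{i+1},a_i,m_i,b_i,u_{i-1}$, where $a_i$ is the common neighbor of $u_i$ and $u_{i+1}$ other than $v$, $b_i=a_{i-1}$ the common neighbor of $u_i$ and $u_{i-1}$ other than $v$, and $m_i$ the remaining ``middle'' neighbor. The $5$-neighbor of $u_i$ is one of $a_i,m_i,b_i$. Call $i$ \emph{good} if $u_i$ has a $5$-neighbor in $\{a_i,b_i\}$; such a vertex is adjacent to two consecutive vertices of $N(v)$ and so has $\ge 2$ common neighbors with $v$. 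Otherwise call $i$ \emph{bad}: then the unique $5$-neighbor of $u_i$ is $m_i$ and $a_i,b_i$ are $6^+$. If $i$ is bad, then $u_{i-1}$, $u_{i+1}$, $m_i$ are pairwise nonadjacent $6^-$-neighbors of $u_i$ (the first two are at cyclic distance two around $u_1\cdots u_7$, and $m_i$ sits opposite $v$ in the $6$-cycle above, hence is nonadjacent to both $u_{i\pm1}$), \emph{unless} one of $u_{i-1},u_{i+1}$ is a $7^+$-vertex. By Lemma~\ref{alpha666}, every bad index therefore has a $7^+$-vertex among its two cyclic neighbors, in particular a cyclic neighbor outside $S$.

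The second step is a counting argument. Let $T=\{\,i\in S : i-1\in S \text{ and } i+1\in S\,\}$; every $i\in T$ is good, since its cyclic neighbors are $6$-vertices, so $i$ cannot be bad. Because $|\{1,\dots,7\}\setminus S|\le 2$, one checks case by case on the cyclic positions of the at most two indices outside $S$ that, outside of a small number of degenerate positions of $S$ (those in which $S$ almost entirely surrounds $v$), the set $T$ contains two indices $i,j$ at cyclic distance $\ge 2$. For two such good indices choose $5$-neighbors $z\in\{a_i,b_i\}$ and $z'\in\{a_j,b_j\}$. Then $z\ne z'$, since $\{a_i,b_i\}$ and $\{a_j,b_j\}$ lie in disjoint pairs of the ``second-ring'' vertices of $v$; and $z\nonadj z'$, since along the closed walk bounding the second neighborhood of $v$ these vertices are separated by at least two others (consecutive second-ring vertices $a_k$ and $a_{k\pm1}$ are separated at least by the middle vertex $m_k$, and by more when the intervening $u_k$ has degree $\ge 7$). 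Lemma~\ref{alpha755big} applied to $v,z,z'$ now gives the contradiction.

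The remaining work, which I expect to be the crux, is the handful of degenerate positions of $S$ — essentially those in which the forced-good indices of $T$ all cluster into one or two consecutive blocks, so that their side $5$-neighbors can collapse onto a single common neighbor of two consecutive $u_i$'s. Here I would argue as follows: first, if any index outside $T$ is in fact good, its side $5$-neighbor is distinct from and nonadjacent to a side $5$-neighbor of an index in $T$, and Lemma~\ref{alpha755big} applies; otherwise the $6$-neighbors of $v$ on the boundary of such a block are bad, which (by the previous step) forces their outer cyclic neighbors to be $7^+$-vertices, and I would then apply Lemma~\ref{alpha6567} — with $v$, or with one of those $7^+$-vertices, in the role of the degree-$7$ vertex — at the relevant $6$-vertices $u_i$ to upgrade the degrees of the second-ring vertices $a_i$, eventually producing either a pair of usable nonadjacent $5$-vertices or a forbidden pattern such as the one in Lemma~\ref{alpha666} or Lemma~\ref{alpha7566}, invoking the no-separating-$3$-cycle hypothesis to rule out the last coincidences. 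This final case analysis is the main obstacle: once the clean ``two good indices far apart'' argument is unavailable, one must extract the contradiction from an almost rigid local picture, and it will likely require a dedicated figure and several subcases rather than a single appeal to a reducibility lemma.
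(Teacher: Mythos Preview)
Your overall strategy---locate two $5$-vertices, each with two common neighbors with $v$, and apply Lemma~\ref{alpha755big}---is the same opening move the paper uses. The good/bad classification and the observation that every $i\in T$ is good (via Lemma~\ref{alpha666}) are correct and useful. However, the argument has a genuine gap at its center.

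The claim ``$z\nonadj z'$, since along the closed walk bounding the second neighborhood of $v$ these vertices are separated by at least two others'' is false. The second ring around $v$ is not an induced cycle; nothing prevents a chord between two $5$-vertices that are far apart along that walk. In fact, the paper's proof shows that this is exactly where the real work lies: after attempting to apply Lemma~\ref{alpha755big} to a pair such as $\{v,w_1,w_4\}$, it must \emph{assume} $w_1\adj w_4$ and then push further, repeatedly invoking Lemma~\ref{alpha665big} (which you never use) to force additional adjacencies until a contradiction with Lemma~\ref{alpha556tri}, Lemma~\ref{alpha55}, or Lemma~\ref{ThreeHelper} appears. Your ``generic'' case is therefore not generic at all; the possibility $z\adj z'$ is live even when $T$ contains two indices at cyclic distance $\ge 2$.

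Your proposed handling of the ``degenerate positions'' also does not work as written. You suggest applying Lemma~\ref{alpha6567} at a bad $u_i$ with $v$ in the role of the $7$-vertex, but the hypotheses of that lemma require three \emph{pairwise nonadjacent} neighbors of $u_i$ of degrees $5$, $\le 6$, $7$. The only $5$-neighbor is $m_i$ and the only $7$-vertex available is $v$, and every remaining neighbor of $u_i$ is adjacent to one of them (the $u_{i\pm1}$ are adjacent to $v$; the $a_i,b_i$ are adjacent to $m_i$), so no valid triple exists. The paper instead splits into the two essentially different placements of the five $6$-neighbors (four consecutive versus the $3{+}2$ pattern) and, in each, chases adjacencies using Lemma~\ref{alpha665big} and Lemma~\ref{alpha755big} in tandem. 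That chase, not a single counting step, is the substance of the proof.
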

\begin{proof}
Suppose to the contrary.  Denote the neighbors of $v$ in clockwise order by
$u_1,\ldots, u_7$.  

\textbf{Case 1:} Vertices $u_1, u_2, u_3, u_4$ are 6-vertices, each with
a 5-neighbor.  

First, suppose that $u_2$ and $u_3$ have a common 5-neighbor, $w_2$. 
Consider the 5-neighbor $w_1$ of $u_1$.  By Lemma~\ref{alpha55}, it cannot be
common with $u_2$; similarly, the 5-neighbor $w_4$ of $u_4$ cannot be common
with $u_3$.  (We must have $w_1$ and $w_4$ distinct, since otherwise we apply
Lemma~\ref{alpha556tri} to $\set{u_1,u_4,w_2}$.  Also, we must have $w_1$ and
$w_4$ each distinct from $w_2$, since $G$ has no separating 3-cycles.)  

First, suppose that $w_1$ has two common neighbors with $u_2$.
If $w_1\nonadj u_4$, then we apply Lemma~\ref{alpha665big} to
$\set{w_1,u_2,u_4}$; so assume
$w_1\adj u_4$.  Now let $J=\set{u_1,u_4,w_2}$.  Clearly, $J$ is an independent
3-set.  Also $|N(J)|\le 6+6+5-4=13$, so we are done by Lemma~\ref{alpha556tri}.
So $w_1$ cannot have two common neighbors with $u_2$.  Similarly, $w_4$ cannot
have two common neighbors with $u_3$.  Hence, $w_1\adj u_7$ and also
$w_4\adj u_5$.  Now we must have $w_1\adj w_4$; otherwise we apply
Lemma~\ref{alpha755big} to $\set{v,w_1,w_4}$.  
Similarly, we must have $w_1\adj w_2$ and $w_2\adj w_4$; these edges cut off
$w_4$ from $u_1$, so $u_1\nonadj w_4$.
Since $u_1$ and $w_4$ are
nonadjacent, but have a 5-neighbor in common, they must have two neighbors in
common.  So we apply Lemma~\ref{alpha665big} to $\set{u_1,u_3,w_4}$.
Hence, we conclude that the common neighbor of $u_2$ and $u_3$ is not a
5-neighbor.  

Since $u_1$ and $u_3$ are $6^-$-vertices, 
by Lemma~\ref{alpha666}, vertex $u_2$ cannot have another $6^-$-vertex that is
nonadjacent to $u_1$ and $u_3$.  Thus, a 5-neighbor of $u_2$ must be a common
neighbor  of $u_1$; call this $5$-neighbor $w_1$.
Similarly, the common neighbor $w_4$ of $u_3$ and $u_4$ is
a 5-vertex.  We must have $w_1\adj w_4$, for otherwise we apply
Lemma~\ref{alpha755big}.  We may assume that $u_6$ is a 6-vertex.  If not, then
$v$'s five 6-neighbors, each with a 5-neighbor, are \emph{successive}; so, by
symmetry, we are in the case above, where $u_2$ and $u_3$ have a common
5-neighbor.

By planarity, either $u_1\nonadj w_4$ or else $u_4\nonadj w_1$; by symmetry,
assume the former.  Since $u_1$ and $w_4$ share a 5-neighbor (and are
nonadjacent), they have two common neighbors.  Now if $u_6\nonadj w_4$, then we
apply Lemma~\ref{alpha665big} to $\set{u_1,u_6,w_4}$.  Hence, assume $u_6\adj
w_4$.  This implies that $u_4\nonadj w_1$.  Now, the same argument implies that
$u_6\adj w_1$.  Now let $J=\set{u_1,u_4,u_6}$.  
Lemma~\ref{CrunchToOneVertexNeighborhoodSize} gives $12\le |N(J)|\le
6+6+6-6=12$.  Thus the vertices of $J$ have no additional pairwise common
neighbors.  Hence, we have an independent 2-set $M_1$ in 
$N(u_1)\setminus (N(u_4)\cup N(u_6))$.  Similarly, we have an independent 2-set
$M_4$ in $N(u_4)\setminus (N(u_1)\cup N(u_6))$.  Now we apply
Lemma~\ref{ThreeHelper} with $J=\set{u_1,u_4,u_6}$ and $S_1=M_1\cup\set{u_1}$
and $S_2=M_4\cup\set{u_4}$.  In each case, we have $\alpha(G[S_i\cup J])\ge
|M_i\cup\set{u_{5-i},u_6}|=4$.  This implies that $|N(J)|\ge 13$, a
contradiction.  Hence, $v$ cannot have four successive
6-neighbors, each with a 5-neighbor.

\textbf{Case 2:} 
Vertices $u_1, u_2, u_3, u_5, u_6$ are 6-vertices, each with a
5-neighbor.

Suppose that the common neighbor $w_5$ of $u_5$ and $u_6$ is a 5-vertex.
By symmetry (between $u_1$ and $u_3$) and Lemma~\ref{alpha666}, assume that the
common neighbor $w_2$ of
$u_2$ and $u_3$ is a 5-vertex.  If $w_2\nonadj w_5$, then we apply
Lemma~\ref{alpha755big}; so assume that $w_2\adj w_5$.  If $u_6\nonadj w_2$,
then apply Lemma~\ref{alpha665big} to $\set{u_6,u_1,w_2}$ (note that $u_6$ and
$w_2$ have two common neighbors, since they have a common 5-neighbor).  So assume
that $u_6\adj w_2$.  Similarly, we assume that $u_3\adj w_5$, since otherwise we
apply Lemma~\ref{alpha665big} to $\set{u_3,u_1,w_5}$.  Now consider the
5-neighbor $w_1$ of $u_1$.  By Lemma~\ref{alpha55}, it cannot be a common
neighbor of $u_2$ (because of $w_2$).  If it is a common neighbor of $u_7$, then
we apply Lemma~\ref{alpha755big} to $\set{w_1,w_5,v}$; note that $w_1\nonadj
w_5$, since they are cut off by edge $w_2u_6$.  Hence, $w_1$ is neither a common
neighbor of $u_7$ nor of $u_2$.  Now we apply Lemma~\ref{alpha665big} to
$\set{u_2,w_1,w_5}$.  Thus, we conclude that the common neighbor of $u_5$ and
$u_6$ is not a 5-vertex.

Let $x$ denote the common neighbor of $u_5$ and $u_6$; as shown in the previous
paragraph, $x$ must be a $6^+$-vertex.  Suppose that the 5-neighbor $w_5$ of
$u_5$ is also a neighbor of $x$.  If $w_5\nonadj u_1$, then we apply
Lemma~\ref{alpha665big} to $\set{u_6,u_1,w_5}$; so assume that $w_5\adj u_1$.
Now if the 5-neighbor $w_6$ of $u_6$ is also adjacent to $x$, then we apply
Lemma~\ref{alpha665big} to $\set{u_5,w_6,u_3}$ (we must have $w_6\nonadj u_3$
due to edge $w_5u_1$).  So, by symmetry (between $u_5$ and $u_6$), we may assume that
$w_5\adj u_4$.  Now, by Lemma~\ref{alpha666}, the 5-neighbor $w_2$ of $u_2$ has
is adjacent to either $u_1$ or $u_3$.  In either case, we must have
$w_2\adj w_5$; otherwise, we apply Lemma~\ref{alpha755big} to $\set{v,w_2,w_5}$.
If $w_6\adj u_7$, then $w_6\adj w_2$ and $w_6\adj w_5$; otherwise, we apply
Lemma~\ref{alpha755big} to $\set{v,w_6,w_2}$ or $\set{v,w_6,w_5}$.  Now we apply
Lemma~\ref{alpha665big} to $\set{u_5,u_3,w_6}$.  So instead $w_6\nonadj u_7$.
Finally, we apply Lemma~\ref{alpha665big} to $\set{u_5,w_6,u_3}$.
This completes the proof.
\end{proof}

\section*{Acknowledgments}
As we mentioned in the introduction, the ideas in this paper come largely from
Albertson's proof~\cite{Albertson} that planar graphs have independence ratio at least $\frac29$.
In fact, many of the reducible configurations that we use here are special cases
of the reducible configurations in that proof.  We very much like that
paper, and so it was a pleasure to be able to extend Albertson's work.
It seems that the part of his own proof that Albertson was least pleased with
was verifying ``unavoidability'', i.e., showing that every planar graph contains a
reducible configuration.  In the introduction to Albertson~\cite{Albertson}, he
wrote: ``Finally Section 4 is devoted to a massive, ugly edge counting which
demonstrates that every triangulation of the plane must contain some
forbidden subgraph.''  This is one reason that we included in this paper
a short proof of this same unavoidability statement, via discharging.  We think
Mike might have liked it.

The first author thanks his Lord and Savior, Jesus Christ.

\bibliographystyle{abbrvplain}
\bibliography{45ct}
\end{document}